\theoremstyle{plain}
\newtheorem{theorem}{Theorem}[section]
\newtheorem{lemma}[theorem]{Lemma}
\newtheorem{cor}[theorem]{Corollary}
\newtheorem{prop}[theorem]{Proposition}
\newtheorem*{lemma*}{Lemma}
\newtheorem*{cor*}{Corollary}
\newtheorem*{theorem*}{Theorem}
\theoremstyle{definition}
\newtheorem{example}{Example}
\newtheorem{definition}[theorem]{Definition}
\theoremstyle{remark}
\newtheorem*{fact*}{Fact}
\newtheorem*{remark}{Remark}
\newtheorem*{notation}{Notation}
\let\oldproofname=\proofname
\renewcommand{\proofname}{\rm\bf{\oldproofname}}
\newcommand{\refeq}[1]{(\ref{#1})}
\newcommand{\R}{\mathbb R}
\newcommand{\Q}{\mathbb Q}
\newcommand{\Z}{\mathbb Z}
\newcommand{\C}{\mathbb C}
\newcommand{\phii}{\varphi}
\providecommand*{\twoheadrightarrowfill@}{%
  \arrowfill@\relbar\relbar\twoheadrightarrow
}
\providecommand*{\twoheadleftarrowfill@}{%
  \arrowfill@\twoheadleftarrow\relbar\relbar
}
\providecommand*{\xtwoheadrightarrow}[2][]{%
  \ext@arrow 0579\twoheadrightarrowfill@{#1}{#2}%
}
\providecommand*{\xtwoheadleftarrow}[2][]{%
  \ext@arrow 5097\twoheadleftarrowfill@{#1}{#2}%
}
\newcommand{\inj}{\hookrightarrow}
\newcommand{\op}[1]{\operatorname{{#1}}}
\newcommand{\mc}[1]{\mathcal{{#1}}}
\newcommand{\Hom}{\op{Hom}}
\newcommand{\GL}{\op{GL}}
\newcommand{\abs}[1]{\left\lvert#1\right\rvert}
\begin{document}
    \title{Torus bundles over lens spaces}
    \author{Oliver H. Wang}
	\maketitle
\begin{abstract}
Let $p$ be an odd prime and let $\rho:\Z/p\rightarrow\GL_n(\Z)$ be an action of $\Z/p$ on a lattice and let $\Gamma:=\Z^n\rtimes_{\rho}\Z/p$ be the corresponding semidirect product.
The torus bundle $M:=T^n_{\rho}\times_{\Z/p}S^{\ell}$ over the lens space $S^{\ell}/\Z/p$ has fundamental group $\Gamma$.
When $\Z/p$ fixes only the origin of $\Z^n$, Davis and L\"uck \cite{DavisLuckTorusBundles} compute the $L$-groups $L^{\langle j\rangle}_m(\Z[\Gamma])$ and the structure set $\mc{S}^{geo,s}(M)$.
In this paper, we extend these computations to all actions of $\Z/p$ on $\Z^n$.
In particular, we compute $L^{\langle j\rangle}_m(\Z[\Gamma])$ and $\mc{S}^{geo,s}(M)$ in a case where $\underline{E}\Gamma$ has a non-discrete singular set.
\end{abstract}

\tableofcontents

\section{Introduction}
In \cite{DavisLuckKTheory} and \cite{DavisLuckTorusBundles}, Davis and L\"uck study groups of the form $\Gamma=\Z^n\rtimes_\rho\Z/p$ where $p$ is an odd prime and $\rho:\Z/p\rightarrow\GL_n(\Z)$ has no nonzero fixed points.
They compute the topological $K$-theory of the real and complex group $C^*$-algebras of $\Gamma$ in \cite{DavisLuckKTheory}.
Along the way, they compute $K^*(B\Gamma)$ and several other $K$-theory groups.
Letting $T^n_{\rho}$ denote the torus with $\Z/p$-action determined by $\rho$ and letting $S^{\ell}$ denote a sphere with a free $\Z/p$-action, define $M:=T^n_\rho\times_{\Z/p}S^{\ell}$.
The manifold $M$ is a torus bundle over a lens space and the assumption that $\Z/p$ acts freely on $\Z^n\setminus\{0\}$ implies that the action of $\Z/p$ on $T^n_\rho$ has discrete fixed points.
In \cite{DavisLuckTorusBundles}, Davis and L\"uck use the computations from \cite{DavisLuckKTheory} to determine the $L$-groups of $\Z[\Gamma]$ and the structure set of $M$ in the sense of surgery theory.

For the $L$-theory computation, Davis and L\"uck use the Farrell-Jones Conjecture for $\Gamma$ to conclude $L^{\langle-\infty\rangle}_m(\Z[\Gamma])\cong H^{\Gamma}_m\left(\underline{E}\Gamma;\mathbf{L}^{\langle-\infty\rangle}_{\Z}\right)$.
Then, they compute the homology group by inverting $p$ and inverting $2$.
After inverting $p$, $H^{\Gamma}_m\left(\underline{E}\Gamma;\mathbf{L}^{\langle-\infty\rangle}_{\Z}\right)$ becomes part of a split short exact sequence.
After inverting $2$, there is an isomorphism $H^{\Gamma}_m\left(\underline{E}\Gamma;\mathbf{L}^{\langle-\infty\rangle}_{\Z}\right)\left[\frac{1}{2}\right]\cong KO^{\Gamma}_m(\underline{E}\Gamma)\left[\frac{1}{2}\right]$ and one applies the computations of \cite{DavisLuckKTheory}.

In this paper, we study the case where the action of $\Z/p$ on $\Z^n$ is not necessarily free on $\Z^n\setminus\{0\}$.
A free abelian group with a $\Z/p$-action can be written as $\Z^n=M\oplus N\oplus\Z^c$ where $M\otimes\Q\cong\Q\left(\zeta\right)^a$ and $N\otimes\Q\cong\Q[\Z/p]^b$.
Here, we use $\zeta$ to denote a primitive $p$-th root of unity.
We say that such a module is \emph{of type $(a,b,c)$} in which case $n=a(p-1)+bp+c$.
The $\Z[\Z/p]$-module $\Z^n$ will be denoted $L$.
As in \cite{DavisLuckTorusBundles} define $\Gamma:=L\rtimes_{\rho}\Z/p$ and define $M:=T^n_{\rho}\times_{\Z/p}S^{\ell}$.
The fixed points of the corresponding $\Z/p$-action on $T^n_\rho$ is a disjoint union of $a(p-1)$ many $(b+c)$-dimensional tori rather than a discrete set when $L$ is of type $(a,b,c)$.
Proving that $L_m^{\langle-\infty\rangle}(\Z[\Gamma])$ is $p$-torsion free is one of the main difficulties in applying the machinery of \cite{DavisLuckTorusBundles} to our case.
In order to do this, we invert $2$ and study topological $K$-theory.
One of our main results is
\begin{restatable*}{theorem}{KTheory}\label{thm: K(BGamma  )}
Suppose $\Gamma$ is of type $(a,b,c)$.
Then,

\begin{enumerate}
\item \label{part: diffs vanish} The differentials $d_r^{i,j}$ in the Atiyah-Hirzebruch-Serre spectral sequence for the fibration $B\Gamma \rightarrow B\Z/p$ vanish for $r\ge2$.
\item \label{part: convergence} If $b\neq0$ or $c\neq0$, then there is an isomorphism of abelian groups
\[
K^m(B\Gamma )\cong K^m\left(T^n_\rho\right)^{\Z/p}\oplus\hat{\Z}^{(p-1)p^a2^{b+c-1}}_p.
\]
If $b=c=0$, then
\[
K^m(B\Gamma )\cong\begin{cases} K^m\left(T^n_\rho\right)^{\Z/p}\oplus\hat{\Z}^{(p-1)p^a}_p& m\text{ even}\\
K^m\left(T^n_\rho\right)^{\Z/p}&m\text{ odd}\end{cases}.
\]
\end{enumerate}
\end{restatable*}

Using the techniques in \cite{DavisLuckTorusBundles}, we are then able to compute the $L$-groups of $\Z[\Gamma]$ and the structure sets of $M$.
The description of the simple structure sets in \cite{DavisLuckTorusBundles} are nice because the torsion comes from $L(\Z)$.
In our case, we will inevitably encounter $2$-torsion coming from $L_{2m}^h(\Z[\Z/p])$.
Fortunately, we are still able to obtain the integral results below (as opposed to results that only hold after inverting $2$).
This is essentially due to the splitting after inverting $p$ of the maps in Proposition \ref{prop: LES from pushout diagram} and our understanding of the Whitehead groups (in particular \cite[Theorem 1.10]{LuckRosenthal}).

In the theorems below $\mc{P}$ will denote the set of conjugacy classes of nontrivial finite subgroups of $\Gamma$ (all of which are isomorphic to $\Z/p$).
We let $N_{\Gamma}P$ denote the normalizer of $P$ in $\Gamma$ and we let $W_{\Gamma}P:=N_{\Gamma}P/P$ denote the Weyl group.
The groups $L_m^{\langle j\rangle}\left(\Z\left[\Gamma\right]\right)$ are Ranicki's surgery groups with decoration and $\mc{S}^{per,\langle j\rangle}_{n+\ell+1}\left(M\right)$ is Ranicki's algebraic structure set.
The set $\mc{S}^{geo,\langle j\rangle}_{n+\ell+1}\left(M\right)$ is the geometric structure set obtained from the surgery exact sequence by using connective $L$-theory.
We refer to Section \ref{section: structure set} for more details.

\begin{restatable*}{theorem}{Lj}\label{thm: L^j computation}
For $j=2,1,0,\cdots,-\infty$, there is an isomorphism
\[
L^{\langle j\rangle}_m(\Z[\Gamma])\cong H_m(T_{\rho}^n;L(\Z))^{\Z/p}\oplus\bigoplus_{(P)\in\mc{P}}L^{\langle j\rangle}_m(\Z[N_\Gamma P])/L^{\langle j\rangle}_m(\Z[W_\Gamma P]).
\]
\end{restatable*}

\begin{restatable*}{theorem}{Sper}\label{thm: computation of per structure set}
For $j=2,1,0,\cdots,-\infty$, there is an isomorphism
\[
\mc{S}^{per,\langle j\rangle}_{n+\ell+1}(M)\cong H_n(T_{\rho}^n;L(\Z))^{\Z/p}\oplus \bigoplus_{(P)\in\mc{P}}L^{\langle j\rangle}_{n+\ell+1}\left(\Z\left[N_\Gamma P\right]\right)/L^{\langle j\rangle}_{n+\ell+1}\left(\Z\left[W_\Gamma P\right]\right).
\]
\end{restatable*}

\begin{restatable*}{theorem}{Sgeo}\label{thm: computation of geo structure set}
For $j=2,1,0,\cdots,-\infty$, there is an isomorphism
\[
\mc{S}^{geo,\langle j\rangle}_{n+\ell+1}(M)\cong H_n(T_{\rho}^n;L(\Z)\langle1\rangle)^{\Z/p}\oplus \bigoplus_{(P)\in\mc{P}}L^{\langle j\rangle}_{n+\ell+1}\left(\Z\left[N_\Gamma P\right]\right)/L^{\langle j\rangle}_{n+\ell+1}\left(\Z\left[W_\Gamma P\right]\right).
\]
\end{restatable*}

When $j=2$ (resp. $1$), Theorem \ref{thm: computation of geo structure set} specializes to a computation of the usual simple structure set (resp. homotopy structure set) in the sense of surgery theory.

\begin{theorem}\label{thm: main theorem geo}
There are isomorphisms
\[
\mc{S}^{geo,s}(M)\cong H_n(T_{\rho}^n;L(\Z)\langle1\rangle)^{\Z/p}\oplus \bigoplus_{(P)\in\mc{P}}L^{s}_{n+\ell+1}\left(\Z\left[N_\Gamma P\right]\right)/L^{s}_{n+\ell+1}\left(\Z\left[W_\Gamma P\right]\right)
\]
and
\[
\mc{S}^{geo,h}(M)\cong H_n(T_{\rho}^n;L(\Z)\langle1\rangle)^{\Z/p}\oplus \bigoplus_{(P)\in\mc{P}}L^{h}_{n+\ell+1}\left(\Z\left[N_\Gamma P\right]\right)/L^{h}_{n+\ell+1}\left(\Z\left[W_\Gamma P\right]\right).
\]
\end{theorem}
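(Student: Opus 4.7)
The plan is to obtain \refthm{thm: main theorem geo} as a direct specialization of \refthm{thm: computation of geo structure set} to the two decorations $j=2$ and $j=1$. In Ranicki's tower of decorated $L$-groups one has the standard identifications $L^{\langle 2\rangle}_m(\Z[G])=L^s_m(\Z[G])$ and $L^{\langle 1\rangle}_m(\Z[G])=L^h_m(\Z[G])$ for any group $G$, and the corresponding geometric structure sets agree: $\mc{S}^{geo,\langle 2\rangle}(M)=\mc{S}^{geo,s}(M)$ and $\mc{S}^{geo,\langle 1\rangle}(M)=\mc{S}^{geo,h}(M)$.

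First I would record these identifications for each summand $L^{\langle j\rangle}_{n+\ell+1}(\Z[N_\Gamma P])/L^{\langle j\rangle}_{n+\ell+1}(\Z[W_\Gamma P])$ indexed by $(P)\in\mc{P}$, which turn immediately into the claimed $L^s$ and $L^h$ quotients when $j$ is set to $2$ or $1$. Next I would observe that the homological summand $H_n(T_{\rho}^n;L(\Z)\langle 1\rangle)^{\Z/p}$ does not depend on $j$ at all, since the coefficient spectrum $L(\Z)\langle 1\rangle$ is built from the $L$-theory of $\Z$ itself, for which $\op{Wh}$ and $\widetilde{K}_0$ vanish, so all decorations collapse on that summand.

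Because the theorem is really a corollary of \refthm{thm: computation of geo structure set}, there is no significant obstacle beyond the decoration bookkeeping; the main substance is already contained in the proof of the restated theorem. The only point worth a careful check is that the surgery exact sequence for $M$ specializes at $j=2$ to the simple surgery exact sequence (involving $\op{Wh}(\Gamma)$) and at $j=1$ to the $h$-cobordism surgery exact sequence (involving $\widetilde{K}_0(\Z[\Gamma])$), but this is a standard feature of the decorated surgery machinery used throughout \cite{DavisLuckTorusBundles}.
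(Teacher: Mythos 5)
Your proof is correct and takes essentially the same route as the paper: Theorem \ref{thm: main theorem geo} is stated immediately after Theorem \ref{thm: computation of geo structure set} with only the line ``When $j=2$ or $1$, we obtain the following computation,'' i.e.\ the paper also treats it as the direct specialization to the decorations $L^{\langle 2\rangle}=L^s$ and $L^{\langle 1\rangle}=L^h$ together with the identifications $\mc{S}^{geo,\langle 2\rangle}_{n+\ell+1}(M)=\mc{S}^{geo,s}(M)$ and $\mc{S}^{geo,\langle 1\rangle}_{n+\ell+1}(M)=\mc{S}^{geo,h}(M)$ set up in Section \ref{section: structure set}.
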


The $L$-groups appearing in these computations are computable; the normalizers are isomorphic to $\Z^{b+c}\times\Z/p$ and the Weyl groups are isomorphic to $\Z^{b+c}$ so Shaneson splitting allows us describe these groups in terms of the $L$-groups of $\Z$ and $\Z[\Z/p]$.

\begin{remark}
The isomorphisms in the theorems above do not come from ``natural'' maps.
For instance, the map $N_\Gamma P\rightarrow\Gamma$ induces a map $L^{\langle j\rangle}_m\left(\Z\left[N_\Gamma P\right]\right)/L^{\langle j\rangle}_m\left(\Z\left[W_\Gamma P\right]\right)\rightarrow L^{\langle j\rangle}_m\left(\Z\left[\Gamma\right]\right)$.
After composing with the isomorphism in Theorem \ref{thm: L^j computation}, the image is a $p$-power index subgroup of $L^{\langle j\rangle}_m\left(\Z\left[N_\Gamma P\right]\right)/L^{\langle j\rangle}_m\left(\Z\left[W_\Gamma P\right]\right)$.
\end{remark}

\begin{remark}
To prove Theorem \ref{thm: L^j computation}, it suffices to prove the case that $\Gamma$ is of type $(a,b,0)$ i.e. $\Gamma=L\rtimes\Z/p$ where $L$ is a free abelian group with no $\Z$-summands with trivial $\Z/p$ action.
Indeed, one can inductively apply Shaneson splitting to obtain the more general case.
It would be convenient to do this with the structure sets as well.
However, we are not aware of a reference that gives Shaneson splitting for the structure sets.
\end{remark}

\subsection{Geometric Interpretations of the Structure Set}
In the situation of \cite{DavisLuckTorusBundles}, the computation of the structure set is interpreted as follows.
Suppose $f:N\rightarrow M$ is a structure and let $\overline{f}:\overline{N}\rightarrow\overline{M}\cong T^n\times S^\ell$ denote the $\Z/p$-cover.
The proof of \cite[Theorem 10.6]{DavisLuckTorusBundles} applies in our case so we have the following interpretation of the $H_n(T_{\rho}^n;L(\Z)\langle1\rangle)^{\Z/p}$ summand of the structure set.
\begin{theorem}\label{thm: splitting invariants}
The following are equivalent:
\begin{enumerate}
\item The structure $[f:N\rightarrow M]\in\mc{S}^{geo,s}(M)$ vanishes under projection to $H_n(T_{\rho}^n;L(\Z)\langle1\rangle)^{\Z/p}$;
\item $\overline{f}:\overline{N}\rightarrow\overline{M}$ is homotopic to a homeomorphism;
\item For a nonempty $J\subseteq\{1,\cdots,n\}$, let $T^J\subseteq T^n$ denote the obvious subtorus.
After making $\overline{f}$ transverse to $T^J\times\{pt\}\subseteq T^n\times S^\ell$, we obtain a surgery problem
\[
\overline{f}:\overline{f}^{-1}(T^J\times\{pt\})\rightarrow T^J\times\{pt\}.
\]
This has a vanishing surgery obstruction in $L_{\abs{J}}\left(\Z\left[\Z^{\abs{J}}\right]\right)$ for all nonempty $J\subseteq\{1,\cdots,n\}$.
\end{enumerate}
\end{theorem}

Understanding the $\bigoplus_{(P)\in\mc{P}}L^{s}_{n+\ell+1}\left(\Z\left[N_\Gamma P\right]\right)/L^{s}_{n+\ell+1}\left(\Z\left[W_\Gamma P\right]\right)$ summand is more difficult.
Shaneson splitting implies that $L^h_{2m}\left(\Z\left[\Z/p\right]\right)$ will appear as summands of $L^s_{n+\ell+1}\left(\Z\left[N_\Gamma P\right]\right)$.
These groups have $2$-torsion which involves the ideal class groups.
Rationally, there is an isomorphism
\[
\bigoplus_{(P)\in\mc{P}}L^{s}_{n+\ell+1}\left(\Z\left[N_\Gamma P\right]\right)/L^{s}_{n+\ell+1}\left(\Z\left[W_\Gamma P\right]\right)\otimes\Q\cong\bigoplus_{(P)\in\mc{P}}\bigoplus_{\substack{k\le b+c\\k+\ell+1\text{ even}}}\left(R_{\C}^{(-1)^{k+\ell+1/2}}(\Z/p)/\langle\op{reg}\rangle\right)^{\binom{b+c}{k}}\otimes\Q.
\]
In the above expression, $R_{\C}^{\pm}(\Z/p)$ denotes the group of virtual complex $\Z/p$-representations whose characters are of the form $\chi\pm\chi^{-1}$ and $\op{reg}$ denotes the regular representation.

We give a heuristic description of this summand in the case $\Gamma=\Z^n\times \Z/p$ (i.e. when $M$ is the product of a torus and a lens space $L^\ell$).
Note that the inner sum on the right hand side is indexed by the standard subtori $T^k\subseteq T^n$ where $k+\ell+1$ is even.
Suppose $f:N\rightarrow M$ is a structure and let $T^k\subseteq T^n$ be a subtorus such that $k+\ell+1$ is even.
Suppose $f$ is transverse to the submanifold $T^k\times L^\ell$.
Then $f^{-1}\left(T^k\times L^\ell\right)$ has a $\Z/p$-cover so we may take its $\rho$-invariant to obtain an element of $\left(R_{\C}^{(-1)^{k+\ell+1/2}}(\Z/p)/\langle\op{reg}\rangle\right)\otimes\Q$.
This gives an element in the summand corresponding to the torus $T^k$.
This description is not technically correct; without some modifications, we do not know how to show it is well-defined.
A rigorous interpretation of these $\rho$-invariants will be the subject of future work.

\subsection{Outline}
In Section \ref{section: group theory}, we review properties of the $\Z[\Z/p]$-module $L$ and we state relevant properties of the group $\Gamma$.
In section \ref{section: equiv homology and fjc}, we introduce some machinery from \cite{DavisLuckTorusBundles}; our $L$-theory computations rely on the Farrell-Jones Conjecture and a description of $\underline{E}\Gamma$ as a homotopy pushout.
Section \ref{section: K theory} is the main computational part of the paper.
It is devoted to computing the topological $K$-theory of $B\Gamma$ and recording some consequences of the computation.
The main computational tool we use is the Atiyah-Hirzebruch-Serre spectral sequence.
In sections \ref{section: L theory} and \ref{section: structure set}, we compute the $L$-groups of $\Z[\Gamma]$ and the structure set of $M$.
These computations follow the computations in \cite{DavisLuckTorusBundles} very closely.
For section \ref{section: structure set}, in particular, our results follow from the proofs of \cite{DavisLuckTorusBundles} with only slight modifications.

\subsection{Acknowledgments}
The author would like to thank Shmuel Weinberger for suggesting this project and for many helpful discussions.
The author would also like to thank the referee for their detailed report.

\section{Group Theoretic Preliminaries}\label{section: group theory}
We are interested in groups $\Gamma$ of the form $L\rtimes\Z/p$ where $L$ is a finitely generated free abelian group.
Throughout this paper, $p$ will always be an odd prime.
Curtis and Reiner classified these groups in \cite[Theorem 74.3]{CurtisReiner}.

\begin{theorem}\label{thm: integral reps of Z/p}
Let $\zeta$ be a primitive $p$-th root of unity.
If $L$ is an indecomposable integral $\Z/p$ representation then $L$ is either
\begin{enumerate}
    \item \label{type 2 module} $B\subseteq\Q(\zeta)$ a fractional ideal with action given by multiplication by $\zeta$.
    \item \label{type 3 module} $B\oplus\Z$ where $B\subseteq\Q(\zeta)$ is a fractional ideal and $t\cdot(b,m)=(\zeta b+mb_0,m)$ where $b_0\in B\setminus(1-\zeta)B$.
    We will denote this by $B\oplus_{b_0}\Z$.
    Two such representations of this form are isomorphic if the fractional ideals represent the same element in the ideal class group.
    \item $\Z$ with a trivial action.
\end{enumerate}
\end{theorem}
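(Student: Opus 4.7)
The strategy is to use the rational splitting $\Q[\Z/p]\cong\Q\times\Q(\zeta)$ to decompose $L\otimes\Q$ into a trivial summand and a faithful summand, and then classify integral lattices inside this decomposition. Write $L\otimes\Q\cong V_0\oplus V_1$ with $V_0$ trivial and $V_1\cong\Q(\zeta)^s$, and set $L^-:=L\cap V_1$, which equals the kernel of the norm element $N_p=1+t+\cdots+t^{p-1}$ acting on $L$. Since the cyclotomic polynomial $\Phi_p(t)$ annihilates $L^-$, it is a finitely generated torsion-free module over $\Z[\zeta]$, and because $\Z[\zeta]$ is the ring of integers of $\Q(\zeta)$ and hence a Dedekind domain, Steinitz's theorem gives $L^-\cong B_1\oplus\cdots\oplus B_s$ for fractional ideals $B_i\subseteq\Q(\zeta)$. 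The quotient $L/L^-$ embeds into $V_0$, so it is a free abelian group $\Z^r$ with trivial $\Z/p$ action, and $L$ fits in a short exact sequence
\[
0\to B_1\oplus\cdots\oplus B_s\to L\to\Z^r\to 0.
\]

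Next, I compute the extension class. Applying $\Hom_{\Z[\Z/p]}(-,B)$ to the standard periodic free resolution of the trivial module $\Z$ identifies $\Ext^1_{\Z[\Z/p]}(\Z,B)\cong B/(1-\zeta)B$ for any fractional ideal $B$. Since $(1-\zeta)$ is the unique prime of $\Z[\zeta]$ above $p$ and $B$ is locally free of rank one, $B/(1-\zeta)B\cong\F_p$. Thus the extension class of the sequence above is encoded by a matrix $M\in M_{s\times r}(\F_p)$, well defined up to the action of $\Aut_{\Z[\Z/p]}(L^-)$ on the rows and the image of $\GL_r(\Z)\to\GL_r(\F_p)$ on the columns.

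Finally, I enumerate indecomposables via $M$. If $s=0$, then $L\cong\Z^r$ and indecomposability forces $r=1$, giving case (3). If $r=0$, then $L\cong B_1\oplus\cdots\oplus B_s$, and indecomposability of this sum of rank-one $\Z[\zeta]$-modules forces $s=1$, giving case (1). Otherwise, a zero row or column of $M$ splits off a summand; elementary column operations lifted from $\SL_r(\Z)\surj\SL_r(\F_p)$ together with row operations coming from $\Z[\zeta]$-automorphisms of $L^-$ bring $M$ to a block normal form consisting of a single identity block $I_k$ and zeros elsewhere, and then indecomposability forces $k=r=s=1$ with $M\neq 0$. This is case (2) with $b_0\notin(1-\zeta)B$; one checks directly that rescaling $b_0$ by a unit in $\Z[\zeta]^{\times}$ or replacing $B$ by a principal multiple yields an isomorphic representation, so the ideal class of $B$ is the only isomorphism invariant.

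The main obstacle is the last normal-form reduction: the row action by $\Aut_{\Z[\Z/p]}(L^-)$ depends delicately on the ideal classes of the $B_i$, since $\Hom_{\Z[\zeta]}(B_j,B_i)=B_j^{-1}B_i$ sits inside $\Q(\zeta)$ differently for different pairs $(i,j)$. One must verify that, across different ideal classes, the combined row/column action still suffices to clear $M$ down to rank-one normal form, ruling out any indecomposable of a type other than those listed.
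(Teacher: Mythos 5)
The paper itself does not prove this theorem --- it is quoted from Curtis--Reiner --- and your argument is essentially the classical Diederichsen--Reiner proof underlying that citation: split off $L^-=\ker(1+t+\cdots+t^{p-1})$ as a $\Z[\zeta]$-lattice, apply Steinitz, identify $\Ext^1_{\Z[\Z/p]}(\Z^r,\bigoplus_i B_i)$ with $M_{s\times r}(\F_p)$, and reduce the extension matrix to a normal form. Those steps are all correct as written (and since $L^-$ is characteristic in $L$, the orbit of the matrix under the two automorphism groups really does classify the middle term, so the reduction to matrix normal form is legitimate).

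The one genuine gap is the one you flag yourself: you never verify that $\Aut_{\Z[\zeta]}(B_1\oplus\cdots\oplus B_s)$ acts on $\bigoplus_i B_i/(1-\zeta)B_i\cong\F_p^s$ through a large enough subgroup of $\GL_s(\F_p)$ to carry out the elimination. This is where the arithmetic of $\Q(\zeta)$ actually enters, and it does need an argument, but a short one closes it. First, every ideal class contains an integral ideal coprime to $(1-\zeta)$, so you may choose each $B_i$ with $(1-\zeta)$-adic valuation zero; then $\Hom_{\Z[\zeta]}(B_j,B_i)=B_j^{-1}B_i$ also has valuation zero at $(1-\zeta)$ and hence surjects onto $\Hom_{\F_p}\bigl(B_j/(1-\zeta)B_j,\,B_i/(1-\zeta)B_i\bigr)\cong\F_p$, so the unipotent automorphisms $1+fE_{ij}$ realize every elementary row operation over $\F_p$ uniformly in the ideal classes. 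Second, for the diagonal scalings note $\Aut_{\Z[\zeta]}(B_i)=\Z[\zeta]^{\times}$ and the cyclotomic unit $(1-\zeta^a)/(1-\zeta)=1+\zeta+\cdots+\zeta^{a-1}$ reduces to $a$ modulo $(1-\zeta)$, so every scalar in $\F_p^{\times}$ is realized. Elementary matrices and scalings generate $\GL_s(\F_p)$ on rows, and with $\SL_r(\F_p)$ on columns Gaussian elimination brings $M$ to $\bigl(\begin{smallmatrix}I_k&0\\0&0\end{smallmatrix}\bigr)$; the row scalings absorb the nonzero pivots, so the failure of $\GL_r(\Z)\to\GL_r(\F_p)$ to be surjective is harmless. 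The same computation of the image of $\Z[\zeta]^{\times}$ in $\F_p^{\times}$ is what justifies your closing claim that all choices of $b_0\in B\setminus(1-\zeta)B$ yield isomorphic modules $B\oplus_{b_0}\Z$. With that paragraph supplied, the proof is complete.
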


\begin{example}
Let $\zeta$ be a $p$-th root of unity.
Then $\Z[\zeta]$ is an example of a module of the form \ref{type 2 module} in Theorem \ref{thm: integral reps of Z/p}.
The group ring $\Z[\Z/p]$ is an example of a module of the form \ref{type 3 module}; it is of the form $\Z\left[\zeta\right]\oplus_1\Z$.
When $p=3$, for instance, the assignment
\[
\left(a_0+a_1\zeta,m\right)\mapsto a_0\left(-a_0+a_1+m\right)t+\left(-a_1+m\right)t^2
\]
defines an isomorphism of $\Z\left[\Z/p\right]$-modules.
\end{example}

The group $\Gamma$ determines a torus bundle $M$ over a lens space with fundamental group $\Z/p$.
Note that the manifold corresponding to $(L\oplus\Z)\rtimes\Z/p$ is a product $M\times S^1$.
As taking products with $S^1$ is a well understood operation in topology, we will sometimes make the simplification that $L$ does not have any $\Z$ summands.

\begin{definition}\label{def: type of Z/p module}
We say that a $\Z/p$-module $L$ is \emph{of type $(a,b,c)$} if it is of the form $L=\left(\bigoplus_{i=1}^a M_i\right)\oplus\left(\bigoplus_{j=1}^b N_j\right)\oplus\Z^c$ where $M_i$ is in form (\ref{type 2 module}) and $N_j$ is of the form (\ref{type 3 module}) in Theorem \ref{thm: integral reps of Z/p}.
We say that $L\rtimes\Z/p$ is \emph{of type $(a,b,c)$} if $L$ is of type $(a,b,c)$.
\end{definition}

\begin{lemma}\label{lem: properties of Gamma }
Suppose $\Gamma=L\rtimes\Z/p$ is of type $(a,b,c)$.
Then the following hold.
\begin{enumerate}
\item The virtually cyclic subgroups of $\Gamma$ are isomorphic to either the trivial group, $\Z/p$, $\Z$ or $\Z\times\Z/p$.
\item Let $\mc{P}$ denote the set of conjugacy classes of maximal finite subgroups of $\Gamma$.
Then, $\abs{\mc{P}}=p^a$.
\item If $P$ is a finite subgroup of order $p$, then the normalizer $N_\Gamma P$ is isomorphic to $\Z^{b+c}\times P$ and the Weyl group $W_\Gamma P:=N_\Gamma P/P$ is isomorphic to $\Z^{b+c}$.
\end{enumerate}
\end{lemma}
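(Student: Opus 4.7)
The plan is to handle the three parts in turn, all by direct computation in the semidirect product $\Gamma = L \rtimes_\rho \Z/p$.

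For part~(1), I would first observe that any finite subgroup of $\Gamma$ injects into $\Gamma/L = \Z/p$, since $L$ is torsion-free; so finite subgroups are trivial or cyclic of order $p$. For an infinite virtually cyclic subgroup $H$, the intersection $H \cap L$ is a finite-index subgroup of $H$ and a subgroup of the free abelian group $L$; virtual cyclicity forces $\op{rank}(H \cap L) \le 1$, and $H$ being infinite forces this rank to be exactly $1$, so $H \cap L \cong \Z$. The quotient $H/(H \cap L)$ injects into $\Z/p$: if it is trivial, $H \cong \Z$; if it equals $\Z/p$, then $H$ is an extension of $\Z/p$ by $\Z$ whose induced action factors through $\rho$. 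For odd $p$, the only automorphism of $\Z$ of order dividing $p$ is the identity, so the action is trivial, and any such extension is isomorphic to $\Z$ or $\Z \times \Z/p$.

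For part~(2), by part~(1) the maximal finite subgroups are precisely the cyclic subgroups of order $p$. Each such subgroup has a unique element whose image in $\Z/p$ is the chosen generator $t$, so order-$p$ subgroups are in bijection with splittings of $\Gamma \to \Z/p$, and conjugacy classes of splittings are classified by $H^1(\Z/p; L)$. Using the decomposition of $L$ from \refdef{def: type of Z/p module}, I would reduce to computing $H^1$ on each indecomposable summand. On a summand $B \subseteq \Q(\zeta)$ of form \ref{type 2 module}, the norm element $1 + \zeta + \cdots + \zeta^{p-1}$ vanishes, giving $H^1(\Z/p; B) \cong B/(1-\zeta)B \cong \F_p$. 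On a summand of form \ref{type 3 module}, the short exact sequence $0 \to B \to B \oplus_{b_0} \Z \to \Z \to 0$ and its long exact sequence in cohomology yield $H^1\bigl(\Z/p;\, B \oplus_{b_0} \Z\bigr) = 0$, once one knows $(B \oplus_{b_0} \Z)^{\Z/p} \cong \Z$ (computed in part~(3) below). For the trivial summand, $H^1(\Z/p; \Z) = 0$ since the norm acts by multiplication by $p$. Summing gives $|H^1(\Z/p; L)| = p^a$.

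For part~(3), I would fix $P = \langle (\ell_0, t) \rangle$ of order $p$, so $\ell_0 + t\ell_0 + \cdots + t^{p-1}\ell_0 = 0$. A direct conjugation yields
\[
(m, t^j)\,(\ell_0, t)\,(m, t^j)^{-1} = \bigl((1-t)m + t^j \ell_0,\ t\bigr),
\]
so $(m, t^j) \in N_\Gamma P$ iff this element is a power $(\ell_0, t)^k$; comparison of $\Z/p$-components forces $k = 1$, and the $L$-component condition simplifies to $m - (1 + t + \cdots + t^{j-1})\ell_0 \in L^{\Z/p}$. Since $(1 + t + \cdots + t^{j-1})\ell_0$ is the $L$-coordinate of $(\ell_0, t)^j \in P$, every element of $N_\Gamma P$ factors uniquely as a product of an element of $P$ and an element of $L^{\Z/p}$; these two factors commute and intersect trivially, giving $N_\Gamma P \cong P \times L^{\Z/p}$. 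Summand by summand, $L^{\Z/p}$ is zero on pieces of form \ref{type 2 module} ($\zeta$ has no nonzero fixed points on $\Q(\zeta)$), is a copy of $\Z$ on each piece of form \ref{type 3 module} (the equation $(1-\zeta)b = m b_0$ forces $p \mid m$ since $b_0 \notin (1-\zeta)B$, producing a single $\Z$), and is a copy of $\Z$ on each trivial summand; hence $L^{\Z/p} \cong \Z^{b+c}$.

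The main technical point is the vanishing $H^1\bigl(\Z/p;\, B \oplus_{b_0} \Z\bigr) = 0$ in part~(2); this is not manifest because these modules are not literally $\Z[\Z/p]$-free. The short exact sequence above combined with the fixed-point calculation in part~(3) resolves this, but as a result parts~(2) and (3) are logically interdependent and best proved in tandem.
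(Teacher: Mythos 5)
Your proposal is correct, and its skeleton coincides with the paper's: finite subgroups inject into $\Z/p$, infinite virtually cyclic subgroups are analyzed through $H\cap L$, the conjugacy classes in $\mc{P}$ are identified with $H^1(\Z/p;L)$, and the normalizer is exhibited as $P\times L^{\Z/p}$. Where you genuinely diverge is in how the two computations are carried out. For $\abs{\mc{P}}$, the paper does not evaluate $H^1(\Z/p;L)$ summand by summand; instead it localizes at $p$ (Proposition \ref{prop: simplifying Tate cohom}), using that $L_{(p)}\cong(\Z[\zeta]^a\oplus\Z[\Z/p]^b\oplus\Z^c)_{(p)}$, and then quotes the Davis--L\"uck computation $H^1(\Z/p;\Z[\zeta]^a)\cong(\Z/p)^a$; you instead run the long exact sequence for $0\to B\to B\oplus_{b_0}\Z\to\Z\to0$ and check directly that the connecting map $\Z\to H^1(\Z/p;B)\cong\F_p$ is onto because the fixed points of $B\oplus_{b_0}\Z$ project onto $p\Z$. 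Your route is more self-contained and makes the role of the condition $b_0\notin(1-\zeta)B$ explicit, at the cost of the case analysis; the paper's route is shorter but leans on the external localization lemma. Likewise for part (3) the paper simply cites the beginning of the proof of L\"uck--Rosenthal's Theorem 1.10, whereas you give the conjugation computation $(m,t^j)(\ell_0,t)(m,t^j)^{-1}=((1-t)m+t^j\ell_0,t)$ and the resulting factorization $N_\Gamma P=L^{\Z/p}\times P$ in full; this is correct and has the added benefit of supplying exactly the fixed-point information your part (2) needs, so the stated interdependence is a one-way dependence of (2) on (3) and causes no circularity. One small point to make explicit in part (2): the long exact sequence argument requires not just $(B\oplus_{b_0}\Z)^{\Z/p}\cong\Z$ but that its image in $\Z^{\Z/p}=\Z$ is exactly $p\Z$; this is precisely what your divisibility argument with $b_0\notin(1-\zeta)B$ gives, so it is worth stating there rather than only in part (3).
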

\begin{proof}
The elements of our group can be written as $xy^i$ where $x\in L$ and $y$ is a generator of $\Z/p$.
Let $\rho(-):L\rightarrow L$ denote the action of $\Z/p$ on $L$.

To show the first statement, if $xy$ and $x'y$ are in a subgroup $H$, then $x(x')^{-1}$ must be in $H$.
Hence, if $H$ is finite, $x=x'$.
It follows that the nontrivial finite subgroups are isomorphic to $\Z/p$.
Suppose $V$ is an infinite virtually cyclic subgroup that is not infinite cyclic.
Then $V$ must surject onto $\Z$ with kernel $\Z/p$ and $V\cap L$ is an infinite cyclic group.
Let $xy$ be a torsion element of $V$ and let $v\in V\cap L$.
Then $xyvy^{-1}x^{-1}=\rho(v)\in V\cap L$.
But if $\rho(v)\neq v$, $V\cap L$ would contain a subgroup isomorphic to $\Z^2$.
Therefore, $V\cap L$ is fixed by $\rho$.
It follows that $V$ is isomorphic $\Z\times\Z/p$.

For the second statement, observe that $(xy)^p=\sum_{i=0}^{p-1}y^i\cdot x$.
Therefore, $xy$ is torsion if and only if $x$ is in the kernel of the norm map $\text{Norm}:L\rightarrow L$.
Moreover, if $x$ and $z$ are in the kernel of the norm map, one checks that the group generated by $xy$ is conjugate to the group generated by $zy$ if an only if $x-z$ is in the image of $1-y:L\rightarrow L$.
Therefore, $\mc{P}$ is in bijection with $H^1(\Z/p;L)$.
It follows from Proposition \ref{prop: simplifying Tate cohom}, which we prove later, that this is isomorphic to $H^1\left(\Z/p;\Z[\zeta]^a\right)$.
By \cite[Lemma 1.10(i)]{DavisLuckKTheory}, this group is isomorphic to $(\Z/p)^a$.

The third statement follows from the beginning of the proof of \cite[Theorem 1.10]{LuckRosenthal}.
\end{proof}

\section{Equivariant Homology and the Farrell-Jones Conjecture}\label{section: equiv homology and fjc}
In this section, we introduce some preliminary material on equivariant homology following \cite{DavisLuckSpacesOverACategory} and on the Farrell-Jones Conjecture, which will allow us to compute $L$ and $K$-groups.

\subsection{Equivariant Homology}
Let $G$ be a discrete group.
Given a covariant functor $\mathbf{E}:Grpd\rightarrow Sp$ from the category of small groupoids to spectra, define the equivariant homology groups of a $G$-CW-complex $X$ to be
\[
H_m^G(X;\mathbf{E}):=\pi_m\left(X^-_+\wedge_{\op{Or}(G)}\mathbf{E}(\overline{G/-})\right)
\]
where $\op{Or}(G)$ is the orbit category of $G$ and $\overline{G/H}$ is the groupoid associated to the $G$-set $G/H$ and $-\wedge_{\op{Or}(G)}-$ denotes a coend.
The functor $X^-_+$ sends an orbit $G/H$ to the fixed point set $X^H_+$.
If $f:X\rightarrow Y$ is a map of $G$-CW-complexes, then we write
\[
H_m^G\left(X\rightarrow Y;\mathbf{E}\right):=H_m^G\left(\op{cone}(f);\mathbf{E}\right).
\]

In this paper, we will take $\mathbf{E}$ to be $\mathbf{K}_{\Z}^{-\infty}$ and $\mathbf{L}_{\Z}^{\langle j\rangle}$ for $j=2,1,0,\cdots,-\infty$.
The corresponding homology theories have the property that $H_m^G\left(G/H;\mathbf{K}_{\Z}^{-\infty}\right)\cong K_m(\Z[H])$ and $H_m^G\left(G/H;\mathbf{L}_{\Z}^{\langle j\rangle}\right)\cong L^{\langle j\rangle}_m(\Z[H])$ for all $m\in\Z$.
We also use equivariant topological $K$-theory, which sends $G/H$ to the representation ring $R_{\C}(H)$ when $G$ is finite.

Equivariant cohomology is defined analogously.
We refer to \cite{DavisLuckSpacesOverACategory} for more details.

\begin{remark}
The notation $H_m^G\left(X;\mathbf{L}^{\langle j\rangle}_{\Z}\right)$ denotes the Davis-L\"uck equivariant homology as mentioned above whereas the notation $H_m\left(X;L^{\langle j\rangle}\left(\Z\right)\right)$ denotes the generalized homology of $X$ with coefficients in the spectrum $L^{\langle j\rangle}\left(\Z\right)$.
\end{remark}

\subsection{Classifying Spaces}
\begin{definition}\label{def: classifying space for family}
Let $G$ be a group.
A \emph{family} of subgroups is a nonempty set $\mc{F}$ of subgroups closed under taking subgroups and conjugation.
A \emph{classifying space for $\mc{F}$}, denoted $E_{\mc{F}}G$, is a $G$-CW-complex satisfying
\[
\left(E_{\mc{F}}G\right)^H\simeq\begin{cases}
pt&H\in\mc{F}\\
\emptyset&H\notin\mc{F}
\end{cases}.
\]
\end{definition}

\begin{example}
If $\{e\}$ is the family consisting of only the trivial group, then $E_{\{e\}}G=EG$.
The primary families we will consider are $\mc{V}cyc$, the collection of virtually cyclic subgroups, and $\mc{F}in$, the collection of finite subgroups.
We will use the following notation.
\[
\underline{\underline{E}}G:=E_{\mc{V}cyc}G\hspace{1cm}\underline{E}G:=E_{\mc{F}in}G
\]
\end{example}

Specifying to the case where $\Gamma =L\rtimes\Z/p$, \cite[Corollary 2.10]{LuckWeiermann} shows that there is the following homotopy pushout diagram.
\begin{equation}\label{diagram: pushout for EGamma  }
\begin{tikzpicture}[baseline=(current  bounding  box.center), scale =1.5]
\node (A) at (0,1) {$\coprod_{(P)\in\mc{P}}\Gamma \times_{N_\Gamma P}EN_\Gamma P$}; \node (B) at (2,1) {$E\Gamma $};
\node (C) at (0,0) {$\coprod_{(P)\in\mc{P}}\Gamma \times_{N_\Gamma P}EW_\Gamma P$}; \node (D) at (2,0) {$\underline{E}\Gamma $};
\path[->] (A) edge (B) (A) edge (C) (B) edge (D) (C) edge (D);
\end{tikzpicture}
\end{equation}
Proceeding as in \cite[Lemma 7.2]{DavisLuckKTheory} and using that $EW_\Gamma P$ is a model of $\underline{E}N_\Gamma P$ as an $N_\Gamma P$-space, we obtain the following long exact sequences.
\begin{prop}\label{prop: LES from pushout diagram}
For an equivariant homology theory $\mc{H}^\Gamma _{\bullet}(-)$, there is a long exact sequence
\[
\cdots\rightarrow\mc{K}_m\xrightarrow{\phii_m}\mc{H}_m^\Gamma (\underline{E}\Gamma )\xrightarrow{\op{ind}_{\Gamma \rightarrow 1}}\mc{H}_m(\underline{B}\Gamma )\rightarrow\mc{K}_{m-1}\rightarrow
\]
where $\mc{K}_m:=\bigoplus_{(P)\in\mc{P}}\ker\left(\mc{H}_m^{N_\Gamma P}(\underline{E}N_\Gamma P)\rightarrow\mc{H}_m(\underline{B}N_\Gamma P)\right)$.
After inverting $p$, the map $\mc{H}_m^\Gamma (\underline{E}\Gamma )\rightarrow\mc{H}_m(\underline{B}\Gamma )$ is a split surjection.

For an equivariant cohomology theory $\mc{H}_\Gamma ^{\bullet}(-)$, there is a long exact sequence
\[
\cdots\rightarrow\mc{C}^{m-1}\rightarrow\mc{H}^m(\underline{B}\Gamma )\xrightarrow{\op{ind}_{\Gamma \rightarrow 1}}\mc{H}^m_\Gamma (\underline{E}\Gamma )\xrightarrow{\phii_m}\mc{C}^m\rightarrow\cdots
\]
where $\mc{C}^m:=\bigoplus_{(P)\in\mc{P}}\op{coker}\left(\mc{H}^m(\underline{B}N_\Gamma P)\rightarrow\mc{H}^m_{N_{\Gamma P}}(\underline{E}N_\Gamma P)\right)$.
After inverting $p$, the map $\mc{H}^m(\underline{B}\Gamma )\rightarrow\mc{H}^m_\Gamma (\underline{E}\Gamma )$ is a split injection.

The maps $\phii_m$ are induced by the inclusions $P\rightarrow \Gamma $.
\end{prop}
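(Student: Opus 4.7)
The plan is to apply the Mayer--Vietoris long exact sequence to the homotopy pushout \refdiag{diagram: pushout for EGamma  } and extract the stated form by a diagram chase, following \cite[Lemma 7.2]{DavisLuckKTheory}. Applying $\mc{H}^\Gamma_\bullet$ to the pushout and using the induction formula $\mc{H}^\Gamma_m(\Gamma\times_H X)\cong\mc{H}^H_m(X)$, together with the freeness of $EN_\Gamma P$ as an $N_\Gamma P$-CW complex and the identification $EW_\Gamma P\simeq\underline{E}N_\Gamma P$ as $N_\Gamma P$-spaces, the four corners are identified as
\[
\bigoplus_{(P)}\mc{H}_m(BN_\Gamma P),\quad \mc{H}_m(B\Gamma),\quad \bigoplus_{(P)}\mc{H}^{N_\Gamma P}_m(\underline{E}N_\Gamma P),\quad\text{and}\quad \mc{H}^\Gamma_m(\underline{E}\Gamma).
\]

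To extract the claimed form from the resulting Mayer--Vietoris sequence, the key observation is that the composition
\[
\mc{H}^{N_\Gamma P}_m(\underline{E}N_\Gamma P)\to\mc{H}^\Gamma_m(\underline{E}\Gamma)\xrightarrow{\op{ind}_{\Gamma\to 1}}\mc{H}_m(\underline{B}\Gamma)
\]
induced by the inclusion $N_\Gamma P\hookrightarrow\Gamma$ factors through $\mc{H}_m(\underline{B}N_\Gamma P)\to\mc{H}_m(\underline{B}\Gamma)$ by the naturality of induction. Restricting to $\mc{K}_m=\bigoplus_{(P)}\ker\!\left(\mc{H}^{N_\Gamma P}_m(\underline{E}N_\Gamma P)\to\mc{H}_m(\underline{B}N_\Gamma P)\right)$ therefore produces a map $\phii_m\colon\mc{K}_m\to\mc{H}^\Gamma_m(\underline{E}\Gamma)$ whose image lies in the kernel of $\op{ind}_{\Gamma\to 1}$. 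A diagram chase splicing this with the Mayer--Vietoris sequence then produces the six-term exact sequence, with an analogous construction of $\mc{C}^m$ in the cohomological case.

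For the splitting after inverting $p$: since every $P\in\mc{P}$ has order $p$, the short exact sequence $P\to N_\Gamma P\to W_\Gamma P$ furnishes a transfer whose composition with $\op{ind}_{N_\Gamma P\to 1}$ equals multiplication by $p$ and hence is invertible after inverting $p$. This produces a splitting of $\mc{H}^{N_\Gamma P}_m(\underline{E}N_\Gamma P)\to\mc{H}_m(\underline{B}N_\Gamma P)$, and assembling over $(P)\in\mc{P}$ yields the global splitting of $\mc{H}^\Gamma_m(\underline{E}\Gamma)\to\mc{H}_m(\underline{B}\Gamma)$. The cohomological splitting is dual.

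The main obstacle is the bookkeeping in the Mayer--Vietoris diagram chase that compatibly identifies $\mc{K}_m$ and $\mc{C}^m$ with $\op{ind}_{\Gamma\to 1}$; this is formally identical to the discrete-fixed-point case of \cite[Lemma 7.2]{DavisLuckKTheory}, since the substitution $EW_\Gamma P\simeq\underline{E}N_\Gamma P$ is all that needs to be adapted to handle the non-discrete singular set.
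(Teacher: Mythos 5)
Your derivation of the long exact sequence itself matches what the paper does: the paper's entire proof is ``proceed as in \cite[Lemma 7.2]{DavisLuckKTheory}, using that $EW_\Gamma P$ is a model for $\underline{E}N_\Gamma P$,'' and your Mayer--Vietoris setup with the identification of the four corners and the observation that the relative term of $\op{ind}_{\Gamma\to1}$ vanishes on the free corners $\Gamma\times_{N_\Gamma P}EN_\Gamma P$ and $E\Gamma$ is exactly that argument. One small point worth making explicit: $\mc{K}_m$ is literally a \emph{kernel} (rather than a general relative term) because $N_\Gamma P\cong W_\Gamma P\times P$, so the projection $N_\Gamma P\to W_\Gamma P$ splits and $\mc{H}^{N_\Gamma P}_m(\underline{E}N_\Gamma P)\to\mc{H}_m(\underline{B}N_\Gamma P)$ is split surjective \emph{integrally} --- no transfer and no inversion of $p$ is needed for that.

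The genuine gap is in your splitting argument after inverting $p$. Local sections of the maps $\mc{H}^{N_\Gamma P}_m(\underline{E}N_\Gamma P)\to\mc{H}_m(\underline{B}N_\Gamma P)$ do not ``assemble'' to a section of $\op{ind}_{\Gamma\to1}\colon\mc{H}^\Gamma_m(\underline{E}\Gamma)\to\mc{H}_m(\underline{B}\Gamma)$: the target $\mc{H}_m(\underline{B}\Gamma)$ is not a sum of the groups $\mc{H}_m(\underline{B}N_\Gamma P)$, and in the long exact sequence $\mc{K}_m\to\mc{H}^\Gamma_m(\underline{E}\Gamma)\to\mc{H}_m(\underline{B}\Gamma)\xrightarrow{\partial}\mc{K}_{m-1}$ nothing in your argument forces $\partial$ to vanish --- indeed it need not vanish integrally, which is precisely why $p$ must be inverted, and your argument never identifies where the inversion of $p$ actually enters. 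The correct mechanism is the comparison $B\Gamma\to\underline{B}\Gamma$: by the quotiented pushout its relative terms are built from $\tilde{\mc{H}}_*(BP)$ for the $p$-groups $P$ (more precisely from $\mc{H}_*(BN_\Gamma P\to \underline{B}N_\Gamma P)$, which is a module built from the reduced homology of $BP$), hence are $p$-torsion; therefore $\mc{H}_m(B\Gamma)\left[\tfrac1p\right]\to\mc{H}_m(\underline{B}\Gamma)\left[\tfrac1p\right]$ is an isomorphism, and the composite
\[
\mc{H}_m(\underline{B}\Gamma)\left[\tfrac1p\right]\xleftarrow{\ \cong\ }\mc{H}_m(B\Gamma)\left[\tfrac1p\right]=\mc{H}^\Gamma_m(E\Gamma)\left[\tfrac1p\right]\to\mc{H}^\Gamma_m(\underline{E}\Gamma)\left[\tfrac1p\right]
\]
is the desired section (dually in cohomology). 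You should replace the transfer paragraph with this argument.
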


\begin{remark}\label{remark: mc K in sequence}
Since $N_\Gamma P\cong\Z^{b+c}\times\Z/p$, we have $\underline{E}N_\Gamma P\simeq\R^{b+c}$ with a trivial $\Z/p$-action.
In particular, $\underline{B}N_\Gamma P$ is just $T^{b+c}$.
For the homology theory $H_{\bullet}^{\Gamma}\left(-;\mathbf{L}_{\Z}^{\langle j\rangle}\right)$ we can make the identifications
\begin{align*}
H_m^{\Z^{b+c}\times\Z/p}\left(E\Z^{b+c};\mathbf{L}^{\langle j\rangle}_\Z\right)&\cong H_m^{\Z/p}\left(T^{b+c};\mathbf{L}^{\langle j\rangle}_{\Z}\right)\cong H_m\left(T^{b+c};L^{\langle j\rangle}\left(\Z[\Z/p]\right)\right)\\
H_m^{\Z^{b+c}}\left(E\Z^{b+c};\mathbf{L}^{\langle j\rangle}_\Z\right)&\cong H_m\left(T^{b+c};L^{\langle j\rangle}(\Z)\right)
\end{align*}
and we can describe $\mc{K}_m$ as
\[
\bigoplus_{(P)\in\mc{P}} H_m\left(T^{b+c};\tilde{L}^{\langle j\rangle}\left(\Z[\Z/p]\right)\right).
\]
Here, the spectrum $\tilde{L}^{\langle j\rangle}\left(\Z\left[\Z/p\right]\right)$ is the cofiber of the map $L^{\langle j\rangle}\left(\Z\right)\rightarrow L^{\langle j\rangle}\left(\Z\left[\Z/p\right]\right)$.
\end{remark}

\subsection{The Farrell-Jones Conjecture}
One of the primary computational tools that we use is the Farrell-Jones Conjecture, which has been proved in many cases.
\cite{BartelsLuckCAT0} proves the conjecture for our group $\Gamma $.

\begin{theorem}\label{thm: FJC for Gamma }
The map $\underline{\underline{E}}\Gamma \rightarrow pt$ induces isomorphisms on $H^\Gamma _{\bullet}\left(-;\mathbf{K}^{-\infty}_{\Z}\right)$ and $H^\Gamma _{\bullet}\left(-;\mathbf{L}^{\langle-\infty\rangle}_{\Z}\right)$.
\end{theorem}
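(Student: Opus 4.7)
The plan is to verify that $\Gamma$ falls within the class of groups for which Bartels and Lück have already established both the $K$-theoretic and $L$-theoretic Farrell-Jones Conjectures, namely groups acting properly and cocompactly by isometries on a finite-dimensional CAT(0) space. Once this is done, the theorem is an immediate consequence of their main result in \cite{BartelsLuckCAT0}, which states precisely that for such groups the assembly maps $H^\Gamma_\bullet(\underline{\underline{E}}\Gamma;\mathbf{K}^{-\infty}_{\Z})\rightarrow H^\Gamma_\bullet(\op{pt};\mathbf{K}^{-\infty}_{\Z})$ and $H^\Gamma_\bullet(\underline{\underline{E}}\Gamma;\mathbf{L}^{\langle-\infty\rangle}_{\Z})\rightarrow H^\Gamma_\bullet(\op{pt};\mathbf{L}^{\langle-\infty\rangle}_{\Z})$ are isomorphisms.

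To produce the required geometric action, I would start with the natural action of $\Gamma$ on $\R^n=L\otimes\R$, where $L\cong\Z^n$ acts by translations and $\Z/p$ acts linearly via $\rho$. The standard Euclidean inner product on $\R^n$ need not be preserved by $\rho$, but since $\Z/p$ is finite I can average the standard inner product over the $\Z/p$-action to obtain a new positive-definite inner product on $\R^n$ that is $\Z/p$-invariant. With this metric, $\R^n$ is a finite-dimensional complete CAT(0) space, $\Z/p$ acts by orthogonal transformations, $L$ acts by translations, and consequently the semidirect product $\Gamma$ acts by affine isometries. The action is proper because $L$ acts properly and $\Z/p$ is finite, and cocompact because the quotient $\R^n/\Gamma$ is a closed manifold (it fibers as $T^n/(\Z/p)$).

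Having exhibited $\Gamma$ as a CAT(0) group in this sense, both the $K$-theoretic and $L$-theoretic Farrell-Jones Conjectures (with the decorations $\langle-\infty\rangle$ appearing in the theorem statement) follow directly from \cite{BartelsLuckCAT0}. There is no substantive obstacle in the argument; the only real content is the construction of the invariant metric, which is a standard averaging trick, and the verification of properness and cocompactness, which is immediate from the structure of $\Gamma$ as a crystallographic-type group. The theorem is therefore essentially a citation, included here so that the subsequent identifications $L^{\langle-\infty\rangle}_m(\Z[\Gamma])\cong H^{\Gamma}_m(\underline{\underline{E}}\Gamma;\mathbf{L}^{\langle-\infty\rangle}_{\Z})$ and its $K$-theoretic analogue are available in the later sections, where the homotopy pushout \refdiag{diagram: pushout for EGamma  } and the long exact sequences of \refprop{prop: LES from pushout diagram} reduce these to computations over the much simpler subgroups $N_\Gamma P\cong\Z^{b+c}\times\Z/p$.
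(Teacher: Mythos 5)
Your proposal is correct and matches the paper, which likewise treats this theorem as a direct citation of \cite{BartelsLuckCAT0}; your extra verification that $\Gamma$ acts properly, cocompactly, and isometrically on $\R^n$ with a $\Z/p$-averaged invariant inner product is the standard (and correct) way to see that $\Gamma$ lies in the class of groups covered there.
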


By Lemma \ref{lem: properties of Gamma  }, \cite[Theorem 65]{LuckReich} and \cite[Proposition 75]{LuckReich}, we obtain the following.

\begin{prop}\label{prop: transitivity}
The map $\underline{E}\Gamma \rightarrow pt$ induces isomorphisms on $\mathbf{L}^{\langle-\infty\rangle}_{\Z}$-homology.
Hence,
\[
H_*\left(\underline{E}\Gamma;\mathbf{L}^{\langle-\infty\rangle}_\Z\right)\cong L^{\langle-\infty\rangle}_*\left(\Z\left[\Gamma\right]\right).
\]
\end{prop}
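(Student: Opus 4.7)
The plan is to combine the Farrell-Jones Conjecture for $\Gamma$ (Theorem \ref{thm: FJC for Gamma }) with a transitivity argument that allows us to replace the model $\underline{\underline{E}}\Gamma$ by the much smaller $\underline{E}\Gamma$. Concretely, Theorem \ref{thm: FJC for Gamma } already identifies $L^{\langle-\infty\rangle}_*(\Z[\Gamma])$ with $H^\Gamma_*(\underline{\underline{E}}\Gamma;\mathbf{L}^{\langle-\infty\rangle}_\Z)$, so the only thing to check is that the natural $\Gamma$-map $\underline{E}\Gamma\to\underline{\underline{E}}\Gamma$ induces an isomorphism on $\mathbf{L}^{\langle-\infty\rangle}_\Z$-homology.

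To see this, I would apply the transitivity principle of \cite[Theorem 65]{LuckReich}. This principle reduces the question of whether the relative assembly $H^\Gamma_*(\underline{E}\Gamma;\mathbf{L}^{\langle-\infty\rangle}_\Z)\to H^\Gamma_*(\underline{\underline{E}}\Gamma;\mathbf{L}^{\langle-\infty\rangle}_\Z)$ is an isomorphism to the same question for each virtually cyclic subgroup $V\le\Gamma$: it suffices to show that for every $V\in\mc{V}cyc$, the map $H^V_*(\underline{E}V;\mathbf{L}^{\langle-\infty\rangle}_\Z)\to L^{\langle-\infty\rangle}_*(\Z[V])$ is an isomorphism.

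Next I would invoke Lemma \ref{lem: properties of Gamma } to enumerate the virtually cyclic subgroups of $\Gamma$: up to isomorphism they are the trivial group, $\Z/p$, $\Z$, and $\Z\times\Z/p$. For the two finite groups the assertion is trivial, since in those cases $\underline{E}V=pt$. For the remaining subgroups $\Z$ and $\Z\times\Z/p$, which are virtually cyclic of type I, the required isomorphism is precisely the content of \cite[Proposition 75]{LuckReich}; this is where the decoration $\langle-\infty\rangle$ is essential, as the relevant Nil-obstructions vanish at this level. Combining these inputs with Theorem \ref{thm: FJC for Gamma } yields the stated isomorphism.

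The main obstacle is conceptual rather than computational: one has to notice that because Lemma \ref{lem: properties of Gamma } rules out any type II virtually cyclic subgroups of $\Gamma$, Proposition 75 of \cite{LuckReich} suffices on the nose. I do not anticipate any hidden difficulty beyond citing these three ingredients in the correct order.
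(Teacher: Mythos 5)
Your proposal is correct and follows exactly the paper's argument: the paper likewise deduces the statement from Lemma \ref{lem: properties of Gamma }, the transitivity principle \cite[Theorem 65]{LuckReich}, and \cite[Proposition 75]{LuckReich}, combined with Theorem \ref{thm: FJC for Gamma }. Your write-up simply makes explicit the case analysis over the virtually cyclic subgroups that the paper leaves implicit.
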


\section{Topological $K$-Theory}\label{section: K theory}
This is the main computation section of the paper.
The goal of this section is to prove the following theorem.

\KTheory

In Theorem \ref{thm: K(BGamma  )} we use $\hat{\Z}_p$ to denote the $p$-adic integers.
We will reduce to the case $c=0$ and proceed by induction on $b$.
The case where $L$ is type $(a,0,0)$ is \cite[Theorem 3.1]{DavisLuckKTheory}.

\subsection{Some Preliminaries}
\subsubsection{Group Cohomology}
We collect some important facts about group cohomology.
For a finite group $G$ and a $\Z[G]$-module $M$, let $\hat{H}^*(G;M)$ denote the Tate cohomology.
For $i\ge1$, $\hat{H}^i(G;M)=H^i(G;M)$ and, for $i\le-2$, $\hat{H}^i(G;M)=H_{-i-1}(G;M)$.
There is an exact sequence
\[
0\rightarrow\hat{H}^{-1}(G;M)\rightarrow M_G\xrightarrow{\text{Norm}}M^G\rightarrow\hat{H}^0(G;M)\rightarrow0
\]
where $\text{Norm}(x)=\sum_{g\in G}g\cdot x$.

Let $M^*$ denote the dual $\Hom_{\Z}(M,\Z)$.
This has a $G$-action via $(g\cdot f)(x)=f(g^{-1}\cdot x)$.
The following is \cite[Lemma A.1]{DavisLuckKTheory}.
\begin{lemma}\label{lem: Tate duality}
Let $G$ be a finite group and let $M$ be a finitely generated $\Z[G]$ module with no $p$-torsion for all primes $p$ dividing the order of $G$.
Then, for all $i\in\Z$, there is an isomorphism
\[
\hat{H}^i(G;M)\cong\hat{H}^{-i}\left(G;M^*\right).
\]
\end{lemma}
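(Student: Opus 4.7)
The plan is to compute both sides from a single complete resolution of $\Z$ over $\Z[G]$, use a $\Hom$-$\otimes$ adjunction to convert the dualized side into a dualized chain complex, and finish with the Universal Coefficient Theorem, after first reducing to the case where $M$ is $\Z$-free.

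First I would reduce to $M$ being $\Z$-torsion-free. Let $T\subseteq M$ be the $\Z$-torsion submodule; since $M$ is finitely generated over $\Z[G]$ (hence over $\Z$), $T$ is finite. The hypothesis on $M$ forces $|T|$ to be coprime to $|G|$, so $|G|$ acts invertibly on $T$; combined with the fact that Tate cohomology is always $|G|$-torsion, this gives $\hat{H}^*(G;T)=0$. The long exact sequence attached to $0\to T\to M\to M/T\to 0$ therefore yields $\hat{H}^*(G;M)\cong\hat{H}^*(G;M/T)$. Dualizing the same sequence (using $T^*=0$ and $\Ext^1_\Z(M/T,\Z)=0$, since $M/T$ is $\Z$-free) gives $M^*\cong (M/T)^*$. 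So without loss of generality $M$ is finitely generated and $\Z$-free.

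Next, fix a complete resolution $P_\bullet$ of $\Z$ by finitely generated free $\Z[G]$-modules, so that $\hat{H}^i(G;N)$ is computed by $\Hom_{\Z[G]}(P_\bullet,N)$ for any $\Z[G]$-module $N$. The standard adjunction $\Hom_{\Z[G]}(P_n,M^*)\cong\Hom_\Z(P_n\otimes_{\Z[G]}M,\Z)$ (using the diagonal $G$-action on the tensor product) then gives
\[
\hat{H}^{-i}(G;M^*)\cong H^{-i}\bigl(\Hom_\Z(P_\bullet\otimes_{\Z[G]}M,\Z)\bigr).
\]

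Finally, since $M$ is $\Z$-free and each $P_n$ is $\Z[G]$-free, every $P_n\otimes_{\Z[G]}M$ is a finitely generated free abelian group, and the homology of this complex is the Tate homology $\hat{H}_n(G;M)=\hat{H}^{-n-1}(G;M)$, which is finitely generated and $|G|$-torsion, hence finite. Applying the Universal Coefficient Theorem and using that $\Hom_\Z(A,\Z)=0$ for any finite abelian group $A$, we obtain
\[
\hat{H}^{-i}(G;M^*)\cong\Ext^1_\Z\bigl(\hat{H}^i(G;M),\Z\bigr)\cong\hat{H}^i(G;M),
\]
the last step being the standard (non-canonical) identification $\Ext^1_\Z(A,\Z)\cong A$ for finite abelian $A$. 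The main thing to track is the Tate degree shift $\hat{H}_n=\hat{H}^{-n-1}$; once that is set up correctly, the argument is a direct application of the UCT and the vanishing of Tate cohomology on modules where $|G|$ is invertible.
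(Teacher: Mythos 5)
Your proof is correct. Note that the paper itself offers no proof of this lemma --- it is imported verbatim as Lemma A.1 of the cited Davis--L\"uck paper --- so there is nothing internal to compare against; your argument (reduce to $\Z$-free $M$ using that $\hat{H}^*(G;-)$ vanishes on modules where $|G|$ acts invertibly, identify $\Hom_{\Z[G]}(P_\bullet,M^*)$ with the $\Z$-dual of $P_\bullet\otimes_{\Z[G]}M$, and apply the Universal Coefficient Theorem together with the finiteness of Tate groups and the shift $\hat{H}_n=\hat{H}^{-n-1}$) is the standard one and the degree bookkeeping checks out: $\Ext^1_\Z$ of $H_{-i-1}(P_\bullet\otimes_{\Z[G]}M)=\hat{H}^i(G;M)$ lands in cohomological degree $-i$, and the $\Hom$-term vanishes because $\hat{H}^{i-1}(G;M)$ is finite. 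The only phrasing to tighten is the aside about the ``diagonal $G$-action on the tensor product'': the relevant identification is $P\otimes_{\Z[G]}M\cong(P\otimes_\Z M)_G$ with the diagonal action, after which $\Hom_\Z\left((P\otimes_\Z M)_G,\Z\right)\cong\Hom_\Z(P\otimes_\Z M,\Z)^G\cong\Hom_{\Z[G]}(P,M^*)$; as written the remark could be misread, but the adjunction you invoke is valid.
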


\begin{prop}\label{prop: simplifying Tate cohom}
Suppose $L$ is a module of type $(a,b,0)$.
Then there is an isomorphism of Tate cohomology groups
\[
\hat{H}^*\left(\Z/p;\Lambda^r L\right)\cong\hat{H}^*\left(\Z/p;\Lambda^r\left(\Z[\zeta]^a\oplus\Z[\Z/p]^b\right)\right).
\]
\end{prop}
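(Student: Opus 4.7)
The plan is to invert all primes other than $p$ and show that $L\otimes_\Z \Z_{(p)}$ and $(\Z[\zeta]^a\oplus\Z[\Z/p]^b)\otimes_\Z\Z_{(p)}$ are isomorphic as $\Z[\Z/p]$-modules; this will imply the desired identification after applying $\Lambda^r$ and then $\hat{H}^*(\Z/p;-)$.

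First I would record two generalities. For any finite group $G$ and $\Z[G]$-module $M$, $\hat{H}^*(G;M)$ is annihilated by $|G|$, and since $\Z_{(p)}$ is flat over $\Z$ the natural map $\hat{H}^*(\Z/p;M)\to\hat{H}^*(\Z/p;M\otimes_\Z\Z_{(p)})$ is an isomorphism. Also, since $\Z_{(p)}$ is flat, exterior powers commute with the base change: $\Lambda^r(M\otimes_\Z\Z_{(p)})\cong\Lambda^r M\otimes_\Z\Z_{(p)}$. So it is enough to produce a $\Z[\Z/p]$-module isomorphism $L\otimes\Z_{(p)}\cong(\Z[\zeta]^a\oplus\Z[\Z/p]^b)\otimes\Z_{(p)}$.

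By Theorem \ref{thm: integral reps of Z/p} and Definition \ref{def: type of Z/p module}, it suffices to check this summand by summand, i.e.\ to show: (i) for any fractional ideal $B\subseteq\Q(\zeta)$ acted on by multiplication by $\zeta$, we have $B\otimes\Z_{(p)}\cong\Z[\zeta]\otimes\Z_{(p)}$; and (ii) for any $B$ and any $b_0\in B\setminus(1-\zeta)B$, we have $(B\oplus_{b_0}\Z)\otimes\Z_{(p)}\cong\Z[\Z/p]\otimes\Z_{(p)}$. For (i), one observes that $\Z[\zeta]\otimes\Z_{(p)}$ is the localization of $\Z[\zeta]$ at its unique prime $\mf{p}=(1-\zeta)$ above $p$, hence a discrete valuation ring; since $B\otimes\Z_{(p)}$ is a finitely generated torsion-free module of rank one over this DVR, it is free of rank one, and any $\Z[\zeta]_{\mf p}$-linear isomorphism is automatically $\Z[\Z/p]$-equivariant because the $\Z/p$-action factors through $\Z[\zeta]$.

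For (ii), the element $b_0$ lies outside $(1-\zeta)B$, hence becomes a unit in $\Z[\zeta]_{\mf p}$; similarly, the twist appearing in the canonical description of $\Z[\Z/p]$ as an object of type \ref{type 3 module} corresponds to a unit $b_0'$. One then writes down the explicit map $(b,m)\mapsto(b\cdot b_0'/b_0,\,m)$ and verifies that it is a $\Z[\Z/p]$-module isomorphism between $(B\oplus_{b_0}\Z)\otimes\Z_{(p)}$ and $\Z[\Z/p]\otimes\Z_{(p)}$. The main (small) obstacle is just this equivariance check; the rest is formal once one commits to the localize-at-$p$ strategy and invokes the classification of integral $\Z/p$-representations to reduce to the indecomposable summands.
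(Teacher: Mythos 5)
Your proof is correct and follows essentially the same route as the paper's: reduce via flatness of $\Z_{(p)}$ to showing $L\otimes\Z_{(p)}\cong(\Z[\zeta]^a\oplus\Z[\Z/p]^b)\otimes\Z_{(p)}$ as $\Z[\Z/p]$-modules, then handle the indecomposable summands using that $\Z_{(p)}[\zeta]$ is a local PID. The paper cites Davis--L\"uck for the type $(a,0,0)$ summands and phrases the type \ref{type 3 module} case as rescaling the fractional ideal by an element $c$, whereas you spell out the equivariance of the explicit map $(b,m)\mapsto(b\,b_0'/b_0,m)$; these are the same argument.
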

\begin{proof}
We follow the proof of \cite[Lemma 1.10(i)]{DavisLuckKTheory} where the case $b=0$ is done.
Since $\hat{H}(\Z/p;M)\cong\hat{H}(\Z/p;M\otimes \Z_{(p)})$, it suffices to check that $\Lambda^r L\otimes\Z_{(p)}\cong\Lambda^r\left(\Z[\zeta]^a\oplus\Z[\Z/p]^b\right)\otimes\Z_{(p)}$.
As it has been shown in the proof of \cite[Lemma 1.10(i)]{DavisLuckKTheory} that $L\otimes\Z_{(p)}\cong \Z[\zeta]^a\otimes\Z_{(p)}$ for $L$ of type $(a,0,0)$, it suffices to show that $L\otimes\Z_{(p)}\cong \Z[\Z/p]^b\otimes\Z_{(p)}$ for $L$ of type $(0,b,0)$.

We may assume $b=1$ so $L=B_0\oplus_{b_0}\Z$ and $\Z[\Z/p]=B_1\oplus_{b_1}\Z$ where $B_0$ and $B_1$ are fractional ideals of $\Z[\zeta]$ and $b_i\notin (1-\zeta)B_i$.
As $\Z_{(p)}[\zeta]$ is a PID, its ideal class group is trivial.
Hence, any fractional ideal $I$ of $\Z_{(p)}[\zeta]$ is of the form $\alpha\Z_{(p)}[\zeta]\subseteq\Q(\zeta)$ for some $\alpha\in\Q(\zeta)$.
In particular, there are numbers $\alpha_0,\alpha_1\in\Z_{(p)}[\zeta]$ such that $\alpha_0B_0\otimes\Z_{(p)}=\alpha_1B_1\otimes\Z_{(p)}$.
The $\Z[\Z/p]$-module structures of $L$ and $\Z[\Z/p]$ do not change if we change the choice of $b_0$ and $b_1$ so long as they remain outside $(1-\zeta)B_i$.
By multiplying $b_0$ with some integer prime to $p$, we may assume $b_0\in B_0\setminus(1-\zeta)B_0$ and $\alpha_1^{-1}\alpha_0b_0\in B_1\setminus(1-\zeta)B_1$.
Let $b_1=\alpha_1^{-1}\alpha_0b_0$.
Then,
\begin{align*}
\left(B_0\oplus_{b_0}\Z\right)\otimes\Z_{(p)}&\cong\left(B_0\otimes\Z_{(p)}\right)\oplus_{b_0}\Z_{(p)}\\
&\cong\left(\alpha_0B_0\otimes\Z_{(p)}\right)\oplus_{\alpha_0 b_0}\Z_{(p)}\\
&\cong\left(\alpha_1B_1\otimes\Z_{(p)}\right)\oplus_{\alpha_1 b_1}\Z_{(p)}\\
&\cong\left(B_1\otimes\Z_{(p)}\right)\oplus_{b_1}\Z_{(p)}\cong\left(B_1\oplus_{b_1}\Z\right)\otimes\Z_{(p)}.
\end{align*}
Here, the second line is obtained by the isomomorphism $(b,m)\mapsto(\alpha_0b,m)$ and the fourth line follows from a similar isomorphism.
The third line follows from the fact that $\alpha_0B_0\otimes\Z_{(p)}\cong\alpha_1B_1\otimes\Z_{(p)}$ and from our choice of $b_1$.
\end{proof}

The following is \cite[Lemma 1.10]{DavisLuckKTheory}.

\begin{prop}\label{prop: Davis-Luck group cohom}
Suppose $L$ is a module of type $(a,0,0)$.
Then
\[
\hat{H}^i(\Z/p;H^j(L))\cong\bigoplus_{\substack{\ell_1+\cdots+\ell_k=j\\0\le\ell_q\le p-1}}\hat{H}^{i+j}(\Z/p;\Z)\cong\begin{cases}(\Z/p)^{a_j}&i+j\text{ even}\\0&i+j\text{ odd}\end{cases}
\]
where $a_j$ is the number of partitions of $j$.
\end{prop}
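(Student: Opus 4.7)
The plan is to reduce to a computation involving single exterior powers of $\Z[\zeta]$ and then assemble via the standard decomposition of exterior powers of a direct sum. Since $L$ is a finitely generated free abelian group, its integral cohomology is the exterior algebra on its dual, so $H^j(L)\cong\Lambda^j L^*$ as $\Z[\Z/p]$-modules. As $L^*$ is $\Z$-torsion free, Tate duality (Lemma \ref{lem: Tate duality}) gives $\hat{H}^i(\Z/p;\Lambda^j L^*)\cong\hat{H}^{-i}(\Z/p;\Lambda^j L)$, and Proposition \ref{prop: simplifying Tate cohom} (with $b=0$) identifies the right-hand side with $\hat{H}^{-i}(\Z/p;\Lambda^j\Z[\zeta]^a)$.

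Next I would use the standard decomposition
\[
\Lambda^j(\Z[\zeta]^a)\;\cong\;\bigoplus_{\ell_1+\cdots+\ell_a=j}\Lambda^{\ell_1}\Z[\zeta]\otimes\cdots\otimes\Lambda^{\ell_a}\Z[\zeta].
\]
Because $\Z[\zeta]$ has $\Z$-rank $p-1$, the summand indexed by $\vec{\ell}$ vanishes unless $0\le\ell_q\le p-1$ for every $q$, which accounts for the range of tuples appearing in the statement and the interpretation of $a_j$.

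The heart of the argument is then to verify, for each surviving $\vec{\ell}$, the identity
\[
\hat{H}^{-i}\bigl(\Z/p;\,\Lambda^{\ell_1}\Z[\zeta]\otimes\cdots\otimes\Lambda^{\ell_a}\Z[\zeta]\bigr)\;\cong\;\hat{H}^{i+j}(\Z/p;\Z).
\]
First I would establish the single-factor identity $\hat{H}^*(\Z/p;\Lambda^\ell\Z[\zeta])\cong\hat{H}^{*-\ell}(\Z/p;\Z)$ for $0\le\ell\le p-1$. The case $\ell=0$ is tautological, and the case $\ell=1$ follows immediately from the short exact sequence $0\to\Z[\zeta]\to\Z[\Z/p]\to\Z\to 0$: since $\Z[\Z/p]$ is $\Z[\Z/p]$-free and therefore Tate-acyclic, the long exact sequence produces the claimed degree shift of one. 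For general $\ell$ I would argue $p$-locally, exploiting the filtration of $\Z_{(p)}[\zeta]$ by powers of the uniformizer $\pi=1-\zeta$; each associated graded piece is $\Z_{(p)}$ with trivial $\Z/p$-action, and carefully tracking the induced filtration on $\Lambda^\ell\Z_{(p)}[\zeta]$ produces the desired shift by $\ell$.

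The main obstacle I anticipate is the combination step: after the single-factor identity I need a K\"unneth-type formula for Tate cohomology to conclude
\[
\hat{H}^*\bigl(\Z/p;\,\bigotimes_{q}\Lambda^{\ell_q}\Z[\zeta]\bigr)\;\cong\;\hat{H}^{*-\sum_q\ell_q}(\Z/p;\Z)\;=\;\hat{H}^{*-j}(\Z/p;\Z).
\]
Because $\hat{H}^*(\Z/p;\Z)$ is $2$-periodic and each factor is Tate-cohomologically a shifted copy of $\Z$, I expect a collapsing K\"unneth spectral sequence, but making this precise---either via explicit two-step periodic projective resolutions of each $\Lambda^{\ell_q}\Z[\zeta]$ or by running the $\pi$-adic filtration argument on the full tensor product simultaneously---is the delicate part of the proof. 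Once this is done, Tate periodicity together with the self-duality $\hat{H}^{-n}(\Z/p;\Z)\cong\hat{H}^n(\Z/p;\Z)$ converts the shift by $-j$ into the $+j$ appearing in the statement, and the parity computation $\hat{H}^{i+j}(\Z/p;\Z)=\Z/p$ for $i+j$ even and $0$ otherwise yields the final form.
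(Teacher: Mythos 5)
Your reduction steps are all sound: $H^j(L)\cong\Lambda^j L^*$, Tate duality plus $(\Lambda^j L^*)^*\cong\Lambda^j L$ to flip the degree, Proposition \ref{prop: simplifying Tate cohom} to replace $L$ by $\Z[\zeta]^a$, the multinomial decomposition of $\Lambda^j(\Z[\zeta]^a)$, and the bookkeeping $\hat{H}^{-i-j}(\Z/p;\Z)\cong\hat{H}^{i+j}(\Z/p;\Z)$. The formula you then need, $\hat H^*\bigl(\Z/p;\bigotimes_q\Lambda^{\ell_q}\Z[\zeta]\bigr)\cong\hat H^{*-j}(\Z/p;\Z)$, is the right target. (The paper itself does not prove this proposition; it simply cites \cite[Lemma 1.10]{DavisLuckKTheory}, so there is no in-text argument to compare against.)

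The genuine gap is in the step you yourself flag as delicate. For a single factor with $2\le\ell\le p-1$, filtering $\Z_{(p)}[\zeta]$ by powers of $\pi=1-\zeta$ and passing to $\Lambda^\ell$ gives an associated graded isomorphic to $\Lambda^\ell\bigl(\Z_{(p)}^{p-1}\bigr)$ with \emph{trivial} $\Z/p$-action, so the resulting spectral sequence has $E_1$-page containing $\binom{p-1}{\ell}$ copies of $\hat H^*(\Z/p;\Z_{(p)})$ in each appropriate bidegree, while the abutment is a single copy. You would need to exhibit enough nonzero differentials to kill all but one copy, and since every entry of $E_1$ is a $\Z/p$ in the same cohomological parity, nothing forces this collapse for free; no argument is given. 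The same issue recurs, compounded, when you try to run the filtration on the full tensor product. As written, the ``K\"unneth-type formula for Tate cohomology'' is not established.

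A clean way to close the gap, which also subsumes the tensor-product step, is to generalize your $\ell=1$ observation rather than switch to a filtration. The short exact sequence $0\to\Z[\zeta]\to\Z[\Z/p]\to\Z\to 0$ induces, for each $\ell\ge1$, a short exact sequence
\[
0\longrightarrow\Lambda^\ell\Z[\zeta]\longrightarrow\Lambda^\ell\Z[\Z/p]\longrightarrow\Lambda^{\ell-1}\Z[\zeta]\longrightarrow 0,
\]
coming from the two-step filtration of $\Lambda^\ell$ applied to a short exact sequence whose cokernel has rank one. By Lemma \ref{lem: exterior is usually free}, the middle term is $\Z[\Z/p]$-free for $1\le\ell\le p-1$, hence Tate-acyclic, so the connecting map gives $\hat H^n(\Z/p;\Lambda^\ell\Z[\zeta])\cong\hat H^{n-1}(\Z/p;\Lambda^{\ell-1}\Z[\zeta])$; induction on $\ell$ gives the single-factor shift. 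For the tensor product, tensor this short exact sequence over $\Z$ with $B:=\bigotimes_{q\ge2}\Lambda^{\ell_q}\Z[\zeta]$. Exactness is preserved because $B$ is $\Z$-free, and the middle term $\Lambda^{\ell_1}\Z[\Z/p]\otimes_\Z B$ is again $\Z[\Z/p]$-free (a free module tensored with a $\Z$-free module, with diagonal action, is free), hence Tate-acyclic. The connecting isomorphism then reduces $\ell_1$ by one, and induction on $j=\sum_q\ell_q$ produces $\hat H^n\bigl(\Z/p;\bigotimes_q\Lambda^{\ell_q}\Z[\zeta]\bigr)\cong\hat H^{n-j}(\Z/p;\Z)$ with no spectral sequence needed. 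This replaces the unjustified collapse in your proposal with a direct dimension shift.
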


We will also need the following lemma, which appears in the proof of \cite[Lemma 1.10]{DavisLuckKTheory}.

\begin{lemma}\label{lem: exterior is usually free}
For $1\le m\le p-1$, $\Lambda^m\Z[\Z/p]$ is free as a $\Z[\Z/p]$-module.
\end{lemma}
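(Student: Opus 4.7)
The plan is to exhibit an explicit $\Z[\Z/p]$-module decomposition of $\Lambda^m\Z[\Z/p]$ into free summands of rank one. Let $t$ denote a generator of $\Z/p$, so that $\{1,t,t^2,\ldots,t^{p-1}\}$ is a $\Z$-basis for $\Z[\Z/p]$. Then a $\Z$-basis for $\Lambda^m\Z[\Z/p]$ is given by the elements $e_S := t^{i_1}\wedge\cdots\wedge t^{i_m}$ indexed by $m$-element subsets $S=\{i_1<\cdots<i_m\}$ of $\Z/p=\{0,1,\ldots,p-1\}$. Multiplication by $t$ permutes these basis elements up to a sign arising from reordering: $t\cdot e_S = \pm e_{S+1}$, where $S+1$ denotes the translate of $S$ in $\Z/p$.

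First I would analyze the $\Z/p$-action on the set of $m$-subsets of $\Z/p$ induced by translation. Since $p$ is prime, any nontrivial element of $\Z/p$ generates all of $\Z/p$, so if an $m$-subset $S$ is fixed by a nontrivial translation then $S$ is either empty or all of $\Z/p$. Because $1\le m\le p-1$, neither possibility occurs, so every $\Z/p$-orbit on these $m$-subsets has length exactly $p$.

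Next I would fix a set of orbit representatives $S_1,\ldots,S_r$ (with $r=\binom{p}{m}/p$) and claim that $\Lambda^m\Z[\Z/p]=\bigoplus_{j=1}^{r}\Z[\Z/p]\cdot e_{S_j}$ as $\Z[\Z/p]$-modules, with each summand free of rank one. The elements $\{t^k\cdot e_{S_j}\}_{0\le k\le p-1}$ equal $\pm e_{S_j+k}$ and, as the orbit of $S_j$ has length $p$, these are $p$ distinct basis vectors of $\Lambda^m\Z[\Z/p]$ up to sign. Hence they are $\Z$-linearly independent, so any $\Z[\Z/p]$-relation $\sum_k a_k\, t^k\cdot e_{S_j}=0$ forces all $a_k=0$, proving $\Z[\Z/p]\cdot e_{S_j}\cong\Z[\Z/p]$. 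Summing over the orbits and comparing $\Z$-ranks (both sides have $\Z$-rank $\binom{p}{m}$) yields the desired direct sum decomposition.

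The only subtle point is the sign bookkeeping when $t$ reorders a wedge, but this never creates trouble: the orbit elements $e_{S_j+k}$ for $k=0,\ldots,p-1$ belong to $p$ distinct subsets of $\Z/p$, so signs only affect which basis vector (up to $\pm$) one lands on, not the $\Z$-linear independence. Thus the argument is self-contained and, aside from this minor check, the proof is purely combinatorial.
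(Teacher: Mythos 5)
Your argument is correct and self-contained: $\Lambda^m\Z[\Z/p]$ is a signed permutation module on the $m$-subsets of $\Z/p$, the translation action on these subsets has only free orbits when $1\le m\le p-1$ (using primality of $p$), and each orbit therefore contributes a free rank-one $\Z[\Z/p]$-summand. The paper does not prove this lemma itself but cites the proof of Lemma~1.10 in Davis--L\"uck, and your orbit-counting argument is the standard one used there, so this is essentially the same approach.
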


The following result computes the fixed sets of the $\Z/p$-action on the torus corresponding to a module of type $(a,b,c)$.

\begin{prop}\label{prop: fixed sets of torus}
Suppose $L$ is a module of type $(a,b,c)$.
Let $T^n_{\rho}$ denote the torus $(L\otimes\R)/L$.
Then $(T^n_{\rho})^{\Z/p}\cong \left(T^{(b+c)}\right)^{\coprod a(p-1)}$.
\end{prop}
\begin{proof}
In the case $L$ is of type $(a,0,0)$, this is \cite[Lemma 1.9(v)]{DavisLuckKTheory}.
The case where $L$ is of type $(0,0,c)$ is straightforward.
Since $T^n_{\rho}$ is equivariantly a product, it suffices to show that, when $L$ is of type $(0,1,0)$, the fixed set is a circle.

Suppose $L$ is of type $(0,1,0)$.
Consider the following short exact sequence of $\Z/p$-modules.
\[
L\rightarrow L\otimes\R\rightarrow T^p_{\rho}
\]
This gives rise to the top exact sequence in the diagram below.
\[
\begin{tikzpicture}[scale=2]
\node (A) at (0,1) {$L^{\Z/p}$};\node (B) at (2,1) {$(L\otimes\R)^{\Z/p}$};\node (C) at (4,1) {$(T^p_{\rho})^{\Z/p}$}; \node (D) at (6,1) {$H^1(\Z/p;L)$};
\node (E) at (0,0) {$\Z[\Z/p]^{\Z/p}$};\node (F) at (2,0) {$\R[\Z/p]^{\Z/p}$};\node (G) at (4,0) {$S^1$};\node (H) at (6,0) {$0$};
\path[->] (A) edge (B) (B) edge (C) (C) edge (D) (A) edge (E) (B) edge node[left]{$\cong$} (F) (C) edge (G) (D) edge (H) (E) edge (F) (F) edge (G) (G) edge (H);
\end{tikzpicture}
\]
By the proof of Proposition \ref{prop: simplifying Tate cohom}, there is an isomorphism of $\Z/p$-modules $L\otimes\R\cong\R[\Z/p]$ which identifies $L$ with a finite index $\Z/p$-submodule of $\Z[\Z/p]$.
This gives the vertical maps.
This also implies that $H^1(\Z/p;L)=0$.
It follows that $(T^p_{\rho})^{\Z/p}\cong S^1$.
\end{proof}

\subsubsection{Cohomology of $T^n_{\rho}$}
In order to relate these algebraic results to the problem of computing topological $K$-theory, we record some results on the cohomology of $T^n_{\rho}$ as a $\Z[\Z/p]$-module.
Let $L$ be a module of type $(a,b,c)$ determining the representation $\rho$.
Then, as $\Z[\Z/p]$-modules, $H_1(T_{\rho}^n)\cong L$ and $H^1(T_{\rho}^n)\cong L^*$.
Moreover, $H^*(T_{\rho}^n)\cong\Lambda^* L^*$.
It is clear that $L^*$ is also a module of type $(a,b,c)$.

We will need to use the topological $K$-theory of $T^n_{\rho}$ considered as a $\Z[\Z/p]$-module.
The Atiyah-Hirzebruch spectral sequence collapses for tori so, as an abelian group, $K^m(T^n)\cong\bigoplus_{\ell\in\Z}H^{m+2\ell}(T^n)$.
The proof of \cite[Lemma 3.3]{DavisLuckKTheory} shows this is also true as $\Z[\Z/p]$-modules.
\begin{lemma}\label{lem: K(T) as Z/p mod}
Let $T_{\rho}^n$ be a torus with a $\Z/p$-action as above.
Then as a $\Z[\Z/p]$-module,
\[
K^m(T_{\rho}^n)\cong\bigoplus_{\ell\in\Z}H^{m+2\ell}(T^n).
\]
\end{lemma}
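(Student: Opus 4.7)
The plan is to identify $K^*(T^n)$ with an exterior algebra over $\Z$ and to show this identification is $\Z[\Z/p]$-equivariant, following the approach of \cite[Lemma 3.3]{DavisLuckKTheory}. Applying the K\"unneth theorem for complex $K$-theory to the product decomposition $T^n=(S^1)^n$ (available because $K^*(S^1)$ is torsion-free) yields an isomorphism of $K^*(\text{pt})$-algebras
\[
K^*(T^n)\cong\Lambda^*_{K^*(\text{pt})}(\eta_1,\ldots,\eta_n),
\]
where $\eta_i\in K^{-1}(T^n)$ is the pullback of the generator of $K^{-1}(S^1)\cong\Z$ along the $i$-th projection. Although the product decomposition itself is not $\Z/p$-equivariant in general, the ring structure on $K^*(T^n)$ is intrinsic, so what remains is to understand the $\Z/p$-action on the $\Z$-span $V:=\langle\eta_1,\ldots,\eta_n\rangle\subset K^{-1}(T^n)$.

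Next, I would verify that $V$ is a $\Z[\Z/p]$-invariant submodule of $K^{-1}(T^n)$ that is equivariantly isomorphic to $L^*=H^1(T^n;\Z)$. The key point is that the natural map $[T^n,S^1]\hookrightarrow[T^n,U]=K^{-1}(T^n)$ induced by $U(1)\hookrightarrow U$ is a homomorphism of abelian groups (since $U(1)\subset U$ is an inclusion of $H$-spaces) and is natural in $T^n$. If the generator of $\Z/p$ acts on $T^n=L_\R/L$ by a matrix $A\in\GL_n(\Z)$, then $\pi_i\circ A=\sum_j A_{ij}\pi_j$ as maps to $S^1$, so pulling back and passing through the above homomorphism gives $A^*\eta_i=\sum_j A_{ij}\eta_j$. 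Under the identification $\eta_i\leftrightarrow x_i\in L^*$, this matches the standard action of $A$ on $L^*$, yielding an equivariant isomorphism $V\cong L^*$.

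Finally, combining these ingredients, the multiplication in $K$-theory gives a $\Z[\Z/p]$-equivariant ring isomorphism $\Lambda^*_\Z L^*\cong K^*(T^n)$ (modulo Bott periodicity), and extracting the parity-$m$ part yields
\[
K^m(T^n)\cong\bigoplus_{\ell\in\Z}\Lambda^{m+2\ell}L^*\cong\bigoplus_{\ell\in\Z}H^{m+2\ell}(T^n).
\]
The main technical hurdle is the equivariance verification in the middle step; it reduces to the naturality of the map $[-,S^1]\hookrightarrow K^{-1}(-)$ together with the elementary computation of the action of $A$ on the $\eta_i$ described above.
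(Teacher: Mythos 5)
Your argument is correct, and it is essentially the standard proof (the one behind \cite[Lemma~3.3]{DavisLuckKTheory}, which the paper simply cites): identify $K^*(T^n)$ with the exterior algebra on its degree-$(-1)$ generators and then verify that the $\Z$-span $V$ of those generators is equivariantly identified with $H^1(T^n;\Z)\cong L^*$. You correctly isolate the one genuinely nontrivial point, namely why $V$ is $\Z/p$-invariant at all: a priori $\Z/p$ acts by ring automorphisms of $K^*(T^n)$, but the decomposition $K^{-1}(T^n)=\bigoplus_{k\text{ odd}}\Lambda^kV$ is not obviously respected, since $K$-theory is only $\Z/2$-graded. Your observation that $V$ coincides with the image of the natural $H$-map $[T^n,S^1]\to[T^n,U]=K^{-1}(T^n)$ resolves this: naturality makes the image invariant, and since $A^*$ is a ring automorphism fixing $V$ setwise, it then preserves each $\Lambda^kV$, giving the equivariant splitting $K^m(T^n)\cong\bigoplus_\ell\Lambda^{m+2\ell}V\cong\bigoplus_\ell H^{m+2\ell}(T^n)$. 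The computation $A^*\eta_i=\sum_jA_{ij}\eta_j$ is also the same computation that identifies the action on $H^1(T^n;\Z)$, so the two actions tautologically agree. No gaps.
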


\subsubsection{Facts about Spectral Sequences}

Suppose $E\rightarrow B$ is a fibration with connected base space and with fiber $F$.
Let $\mc{H}^*$ be a generalized cohomology theory and let $\mc{H}_*$ be a generalized homology theory.
There are Atiyah-Hirzebruch-Serre spectral sequences
\begin{align}\label{ss: ahs ss}
E_2^{i,j}&=H^i(B;\mc{H}^j(F))\Rightarrow\mc{H}^{i+j}(E)\\
E^2_{i,j}&=H_i(B;\mc{H}_j(F))\Rightarrow\mc{H}_{i+j}(E).
\end{align}

In the cohomology spectral sequence above, we have
\[
E_2^{0,j}=H^0(G;\mc{H}^j(F))=\mc{H}^j(F)^{G}
\]
when $B$ is path connected with fundamental group $G$.
Thus, $E_\infty^{0,j}$ is a subgroup of $\mc{H}^j(F)^{G}$.
In the homology spectral sequence,
\[
E^2_{0,j}=H_0(G;\mc{H}_j(F))=\mc{H}_j(F)_G
\]
so $E^\infty_{0,j}$ is a quotient of $\mc{H}_j(F)_G$.

The following is in the appendix of \cite{DavisLuckKTheory}.
\begin{theorem}
The composite $\mc{H}^j(E)\rightarrow E_\infty^{0,j}\inj\mc{H}^j(F)^{G}$ is equal to the map on cohomology induced by the inclusion $F\inj E$.
In particular, $\mc{H}^j(E)\rightarrow\mc{H}^j(F)^{G}$ is surjective if and only if the differentials $d_r^{0,j}$ vanish for $r\ge2$.

The composite $\mc{H}_j(F)_G\rightarrow E^\infty_{0,j}\rightarrow\mc{H}_j(E)$ is equal to the map on homology induced by the inclusion $F\inj E$.
In particular, $\mc{H}_j(F)_G\rightarrow\mc{H}_j(E)$ is injective if and only if the differentials $d^r_{r,j-r-1}$ vanish for $r\ge2$.
\end{theorem}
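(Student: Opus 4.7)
The plan is to derive both halves of the statement directly from the construction of the Atiyah--Hirzebruch--Serre spectral sequence as the spectral sequence of the skeletal filtration. Fix a CW-structure on $B$ and set $E^{(p)}:=q^{-1}(B^{(p)})$, where $q:E\rightarrow B$ is the fibration; the cohomology spectral sequence arises from the filtration $F^p\mc{H}^n(E):=\ker\left(\mc{H}^n(E)\rightarrow\mc{H}^n(E^{(p-1)})\right)$, with $E_\infty^{p,n-p}=F^p/F^{p+1}$. Since $E^{(0)}\simeq\coprod_{v\in B^{(0)}}F$, there is a natural identification $\mc{H}^j(E^{(0)})\cong C^0(B;\mc{H}^j(F))$ of $E_1^{0,j}$ with cellular $0$-cochains, under which $E_2^{0,j}=\ker d_1=H^0(B;\mc{H}^j(F))=\mc{H}^j(F)^G$.

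First I would observe that, by definition of the filtration, the tautological quotient $\mc{H}^j(E)\twoheadrightarrow F^0/F^1=E_\infty^{0,j}$ composed with the edge injection $E_\infty^{0,j}\inj E_2^{0,j}$ is exactly the restriction map $\mc{H}^j(E)\rightarrow\mc{H}^j(E^{(0)})$ corestricted to its image. Evaluating at any basepoint $b_0\in B^{(0)}$ recovers the fiber restriction $\mc{H}^j(E)\rightarrow\mc{H}^j(F)$, which gives the first sentence. For the ``in particular'' clause, the quotient $\mc{H}^j(E)\twoheadrightarrow E_\infty^{0,j}$ is always surjective, so surjectivity of the whole composite is equivalent to $E_\infty^{0,j}=E_2^{0,j}$. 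Incoming differentials into $(0,j)$ on page $r\ge2$ originate at $(-r,j+r-1)$, outside the first quadrant, so only the outgoing $d_r^{0,j}$ for $r\ge2$ can shrink $E_r^{0,j}$; thus equality holds if and only if each $d_r^{0,j}$ vanishes.

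The homological statement is completely dual. Use the filtration $F_p\mc{H}_n(E):=\op{image}\left(\mc{H}_n(E^{(p)})\rightarrow\mc{H}_n(E)\right)$, identify $E^2_{0,j}$ with $\mc{H}_j(F)_G$ via cellular $0$-chains, and run the same edge-map analysis with arrows reversed: the composite $\mc{H}_j(F)_G\twoheadrightarrow E^\infty_{0,j}\inj\mc{H}_j(E)$ is the fiber inclusion in homology, and the obstruction to injectivity is now the image of the incoming differentials with target $E^r_{0,j}$ on each page $r\ge2$.

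The main subtlety is the identification $E_2^{0,j}\cong\mc{H}^j(F)^G$: a priori the $E_2$-page has local coefficients, and one must verify that the monodromy action of $\pi_1(B)$ on $\mc{H}^j(F)$ coming from the fibration agrees with the $G$-action appearing in the statement, so that $H^0$ with these local coefficients equals the $G$-invariants. This is standard once trivializations over the $0$- and $1$-cells of $B$ are chosen compatibly, but it is the technical input that actually makes the two sides match.
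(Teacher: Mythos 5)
Your argument is correct, and it is essentially the standard proof. A point worth noting: the paper itself gives no proof of this theorem — it imports the statement verbatim from the appendix of \cite{DavisLuckKTheory} — and the argument you give (skeletal filtration on $B$, pullback filtration $E^{(p)}=q^{-1}(B^{(p)})$ on $E$, identification of $E_\infty^{0,j}=F^0/F^1$ with the image of the restriction $\mc{H}^j(E)\rightarrow\mc{H}^j(E^{(0)})$ corestricted to $E_2^{0,j}$, then evaluation at a $0$-cell to recover the fiber restriction) is precisely the one used there, with the dual filtration by images on the homological side. The key structural observation driving the "in particular" clauses is also stated correctly: because the base is a CW-complex the filtration lives in nonnegative degrees, so $(0,j)$ receives no incoming differentials in cohomology (dually emits none in homology), and therefore $E_\infty^{0,j}=E_2^{0,j}$ is equivalent to the vanishing of the one-sided differentials named in the theorem. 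The caveat you raise about matching the $\pi_1(B)$-monodromy with the $G$-action on $\mc{H}^j(F)$ is exactly the technical input needed for $E_2^{0,j}\cong\mc{H}^j(F)^G$; in the paper's setting $B=B\Z/p$ and $G=\Z/p$ acting on $T^n_\rho$, so this is the action one would expect.

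One unrelated remark about the theorem statement itself, not about your proof: with the usual homological convention $d^r\colon E^r_{p,q}\rightarrow E^r_{p-r,q+r-1}$, the differential with target $E^r_{0,j}$ originates at bidegree $(r,\,j-r+1)$, so the index $d^r_{r,j-r-1}$ appearing in the paper looks like a typo for $d^r_{r,j-r+1}$. Your proof sidesteps this by speaking of "the incoming differentials with target $E^r_{0,j}$," which is the correct content.
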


The fibrations we use will come from group extensions $N\rightarrow G\rightarrow G/N$ where $G/N$ is finite.
We have the inclusion of $BN$ as the fiber of $BG\rightarrow B(G/N)$.
This map induces the inclusion $N\inj G$ so, up to homotopy we may think of $BN\rightarrow BG$ as a covering space with fibers $G/N$.
If $G/N$ is finite, then there is a transfer map $\tau^*:\mc{H}^m(BN)\rightarrow\mc{H}^m(BG)$ such that the composition
\[
\mc{H}^m(BN)\xrightarrow{\tau^*}\mc{H}^m(BG)\rightarrow\mc{H}^m(BN)^{G/N}
\]
is the norm map.

For a generalized homology theory there is a transfer map $\tau_*:\mc{H}_m(BG)\rightarrow\mc{H}_m(BN)$ such that the composition
\[
\mc{H}_m(BN)_{G/N}\rightarrow\mc{H}_m(BG)\xrightarrow{\tau_*}\mc{H}_m(BN)
\]
is the norm map.
To summarize,

\begin{prop}\label{prop: norms and E_infty}
Suppose $N\rightarrow G\rightarrow G/N$ is a group extension with $G/N$ finite.
In the cohomological Atiyah-Hirzebruch-Serre spectral sequence for the fibration $BN\rightarrow BG\rightarrow B(G/N)$, an element $x\in E_2^{0,j}$ is nonzero in $E_{\infty}^{0,j}$ if it is in the image of the norm map.
In the homological spectral sequence, an element $x\in E^2_{0,j}$ represents a nonzero element in $E^{\infty}_{0,j}$ if the norm of $x$ is nonzero.
\end{prop}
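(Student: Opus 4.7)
The plan is to combine the edge-map description from the theorem immediately preceding the proposition with the defining property of the transfer; I would handle the cohomology and homology cases in parallel, each being a two-step argument.

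First I would observe that position $(0,j)$ is especially well-behaved. In the first-quadrant cohomological spectral sequence, the only possibly nonzero differentials on $E_r^{0,j}$ are outgoing, so $E_{\infty}^{0,j}$ is naturally a subgroup of $E_2^{0,j}$. Dually, in the homological spectral sequence only incoming differentials can reach $E^r_{0,j}$, so $E^{\infty}_{0,j}$ is a quotient of $E^2_{0,j}$. Under this identification, ``$x$ is nonzero in $E_{\infty}$'' is equivalent to ``$x$ lies in this distinguished subgroup'' in the cohomological case and to ``$x$ has nonzero image in this distinguished quotient'' in the homological case.

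For the cohomological assertion, I would write $x = \text{Norm}(y)$ for some $y \in \mc{H}^j(BN)$ and then use the transfer--norm factorization to express $x = \text{res}(\tau^*(y))$, where $\text{res} \colon \mc{H}^j(BG) \to \mc{H}^j(BN)^{G/N}$ denotes the restriction induced by $BN \inj BG$. The preceding theorem identifies $\text{res}$ with the edge composite $\mc{H}^j(BG) \to E_{\infty}^{0,j} \inj E_2^{0,j}$, so $x$ lies in the subgroup $E_{\infty}^{0,j}$, and a nonzero such $x$ remains nonzero there.

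For the homological assertion I would argue by contrapositive. The inclusion-induced map $\mc{H}_j(BN)_{G/N} \to \mc{H}_j(BG)$ factors, by the preceding theorem, as $\mc{H}_j(BN)_{G/N} \surj E^{\infty}_{0,j} \inj \mc{H}_j(BG)$. If $x \in E^2_{0,j}$ represents $0$ in $E^{\infty}_{0,j}$, then its image in $\mc{H}_j(BG)$ is zero, and applying $\tau_*$ together with the transfer--norm relation yields $\text{Norm}(x) = \tau_*(0) = 0$. Contrapositively, $\text{Norm}(x) \neq 0$ forces $x$ to be nonzero in $E^{\infty}_{0,j}$. The only real obstacle is bookkeeping --- keeping the direction of the contrapositive straight and matching the two edge-map identifications with the corresponding transfer--norm factorizations --- since once the preceding theorem is cited there is essentially nothing left to compute; the proposition is really a convenient packaging of the transfer--norm relation.
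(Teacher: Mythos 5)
Your argument is correct and is exactly what the paper intends: the proposition is introduced with ``To summarize,'' and is nothing more than the combination of the edge-homomorphism theorem with the transfer--norm factorization, which is precisely how you argue both cases. The only point worth noting is that in the cohomological case the hypothesis tacitly assumes $x\neq 0$, which you handle correctly.
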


We specialize \cite[Theorem 13.2]{Boardman} to the following statement.
\begin{theorem}\label{thm: boardman convergence}
If there is an $N>0$ such that the differentials $d_r^{i,j}$ in the spectral sequence \refeq{ss: ahs ss} vanish for $r>N$, then this converges strongly in the following sense.
For the filtration
\[
\cdots F^s\subseteq F^{s-1}\subseteq\cdots\subseteq F^1\subseteq F^0=\mc{H}^{m}(X),
\]
the following hold:
\begin{enumerate}
    \item $\bigcap_{s=0}^\infty F^s=0$;
    \item $\mc{H}^{m}(X)=\varprojlim_s \mc{H}^{m}(X)/F^s$.
\end{enumerate}
\end{theorem}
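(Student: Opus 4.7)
The plan is to derive this as a direct specialization of Boardman's general convergence machinery, so no essentially new argument is required beyond invoking the relevant results of \cite{Boardman}. First I would observe that the Atiyah-Hirzebruch-Serre spectral sequence, arising from the skeletal filtration on the total space of the fibration $F\to E\to B$, is \emph{conditionally convergent} in Boardman's sense; this is built into the construction from a skeletal filtration of a CW-complex and needs no hypothesis on $F$, $E$, $B$, or the theory $\mc{H}^*$ beyond what is tacitly assumed.

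Next I would identify the extra criterion that upgrades conditional convergence to strong convergence. Boardman's main result in this direction states that it suffices to show that $\varprojlim{}^1_r E_r^{i,j} = 0$ for every bidegree $(i,j)$ (equivalently, that the ``$RE_\infty$'' term vanishes). Under the present hypothesis that $d_r^{i,j}=0$ for all $r>N$, the tower $\{E_r^{i,j}\}_{r\ge N+1}$ is constant in $r$, so it satisfies the Mittag--Leffler condition trivially and the derived inverse limit vanishes identically. Strong convergence in Boardman's sense then translates, in the filtration notation used here, into precisely the two listed conclusions: Hausdorffness of the filtration, $\bigcap_{s=0}^\infty F^s = 0$, and completeness, $\mc{H}^m(X) = \varprojlim_s \mc{H}^m(X)/F^s$.

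The only step that demands any real care is matching the filtration conventions of Boardman with those used here, together with checking that the AHSS associated to a skeletal filtration falls inside the class of half-plane spectral sequences to which Boardman's theorem applies; both are bookkeeping rather than genuine obstacles. I do not anticipate any conceptual difficulty, since the hypothesis on $N$ is engineered to kill the $\varprojlim{}^1$ obstruction, which is the exact gap between conditional and strong convergence in Boardman's framework.
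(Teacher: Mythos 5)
Your proposal is correct and matches the paper's treatment: the paper gives no independent argument, simply citing Boardman's convergence theorem, and your route (conditional convergence of the skeletal-filtration AHSS plus vanishing of the derived limit $RE_\infty$ because the pages $E_r^{i,j}$ stabilize for $r>N$) is exactly the intended specialization. The only detail worth recording is that conditional convergence for the cohomological AHSS over an infinite-dimensional base rests on the Milnor $\varprojlim$--$\varprojlim^1$ sequence for the representable theory $\mc{H}^*$, which holds here since we work with topological $K$-theory of CW-complexes.
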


\subsection{Vanishing of Differentials}
We now prove Theorem \ref{thm: K(BGamma  )}(\ref{part: diffs vanish}).
First, we reduce to the case where $\Gamma$ is of type $(a,b,0)$.

\begin{lemma}\label{lem: reduction of diffs vanishing to (a,b,0)}
Suppose \ref{thm: K(BGamma  )}(\ref{part: diffs vanish}) is true for groups $\Gamma$ of type $(a,b,c)$.
Then it is true for groups of type $(a,b,c+1)$.
\end{lemma}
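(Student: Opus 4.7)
The plan is to exploit the fact that a lattice of type $(a,b,c+1)$ decomposes as $L\oplus\Z$ where $L$ is of type $(a,b,c)$ and $\Z/p$ acts trivially on the extra $\Z$-summand. Consequently $\Gamma'\cong\Gamma\times\Z$, so $B\Gamma'=B\Gamma\times S^1$ and the fiber of $B\Gamma'\to B\Z/p$ is $T^{n+1}_{\rho'}=T^n_\rho\times S^1$, with $\Z/p$ acting trivially on the new circle. The idea is to compare the two AHSSs, exhibiting the one for $\Gamma'$ as two copies of the one for $\Gamma$ joined by multiplication by a Bott-type class.

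First I would decompose the $E_2$-page. The K\"unneth formula gives a $\Z[\Z/p]$-module isomorphism
\[
K^j\left(T^n_\rho\times S^1\right)\cong K^j\left(T^n_\rho\right)\oplus u\cdot K^{j-1}\left(T^n_\rho\right),
\]
where $u\in K^1(S^1)$ is the standard generator. Applying $H^i(B\Z/p;-)$ yields
\[
E_2^{i,j}(\Gamma')\cong E_2^{i,j}(\Gamma)\oplus u\cdot E_2^{i,j-1}(\Gamma).
\]
The first summand is identified with the image of the map of spectral sequences induced by the projection of fibrations $\left(B\Gamma\times S^1\to B\Z/p\right)\to\left(B\Gamma\to B\Z/p\right)$. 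By functoriality its differentials coincide with those of the AHSS for $\Gamma$ and so vanish by the inductive hypothesis.

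For the second summand I would use multiplicativity. The class $u$ lifts to a global class $\tilde u\in K^1(B\Gamma')$ obtained by pulling back the generator of $K^1(S^1)$ along the projection $B\Gamma'=B\Gamma\times S^1\to S^1$; the restriction of $\tilde u$ to the fiber recovers $u$. Since any class in the $0$-th column of $E_2$ that lifts to the total space is a permanent cycle, $d_r(u)=0$ for all $r\ge 2$. The AHSS for complex $K$-theory is a spectral sequence of algebras with differentials acting as derivations, so
\[
d_r(u\cdot x)=d_r(u)\cdot x\pm u\cdot d_r(x)=u\cdot d_r(x),
\]
which vanishes on $x\in E_r^{i,j-1}(\Gamma)$ by the inductive hypothesis. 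Combined with the vanishing on the first summand, this shows $d_r=0$ on all of $E_r^{i,j}(\Gamma')$ for $r\ge2$.

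The main technical obstacle will be confirming the multiplicativity of the AHSS in this setting, where the fiber carries a nontrivial $\pi_1(B\Z/p)$-action, and verifying that the K\"unneth decomposition of the $E_2$-page is genuinely compatible with the multiplicative structure. Both are standard, but require some bookkeeping to state precisely; once in place, the inductive step is essentially formal.
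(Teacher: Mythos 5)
Your argument is correct, and it rests on the same underlying decomposition as the paper's (the suspension/K\"unneth splitting of $K^*(-\times S^1)$ together with the retraction $B\Gamma'\to B\Gamma\to B\Gamma'$), but the two proofs implement it differently. The paper works at the level of the exact couple: since $K^m(Y_s\times S^1)\cong K^m(Y_s)\oplus K^{m-1}(Y_s)$ naturally in $Y_s$, the entire exact couple for $B\Gamma\times S^1$ splits as a direct sum of two degree-shifted copies of the exact couple for $B\Gamma$, so every page and every differential of the new spectral sequence is a direct sum of two shifted copies of the old one, and the vanishing is immediate from the inductive hypothesis. You instead handle the first summand by naturality of the retraction and the second by multiplicativity: the class $u\in E_2^{0,1}$ is a permanent cycle because it extends to $\tilde u\in K^1(B\Gamma\times S^1)$, and the derivation property gives $d_r(u\cdot x)=\pm u\cdot d_r(x)=0$. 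Both routes are valid; the trade-off is exactly the one you flag at the end. Your approach requires knowing that the Atiyah--Hirzebruch--Serre spectral sequence for $K$-theory is multiplicative in the presence of a nontrivial $\pi_1(B\Z/p)$-action on the fiber, whereas the paper's exact-couple splitting avoids any appeal to the ring structure and is therefore the more elementary (if less conceptual) argument. If you want to keep the multiplicative phrasing, it would be cleanest to note that the needed structure is only module multiplication by the class pulled back from $K^1(S^1)$, which can be set up directly from the external product and naturality without invoking the full multiplicativity theorem.
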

\begin{proof}
If $\Gamma$ is of type $(a,b,c+1)$, then it is isomorphic to $\Gamma'\times\Z$ where $\Gamma'$ is of type $(a,b,c)$.
Since $B\Gamma\cong S^1\times B\Gamma'$, there is a morphism of spectral sequences coming from the map of fibrations $B\Gamma'\rightarrow B\Gamma$ over $B\Z/p$.

Let $B\Z/p^{(s)}$ denote the $s$-skeleton of $B\Z/p$.
Let $Y_s$ denote the preimage of $B\Z/p^{(s)}$ under the map $B\Gamma'\rightarrow B\Z/p$.

The exact couple giving rise to the spectral sequence for $B\Gamma'$ is given by the abelian groups $A_1^{s,m-s}=K^m(Y_s)$ and $E_1^{s,m-s}=K^m(Y_s,Y_{s-1})$ with maps
\[
K^{m-1}(Y_s)\rightarrow K^m(Y_{s+1},Y_s)\rightarrow K^m(Y_{s+1})\rightarrow K^m(Y_s).
\]
The exact couple giving rise to the spectral sequence for $B\Gamma$ is given by abelian groups $K^m(Y_s\times S^1)$ and $K^m(Y_s\times S^1,Y_{s-1}\times S^1)$.
We can write the groups in this exact couple as a $A_1^{s,m-s}\oplus A_1^{s,m-s-1}$ and $E_1^{s,m-s}\oplus E_1^{s,m-s-1}$.
The maps between these groups are sums of the maps between the exact couple for $B\Gamma'$.
Our inductive hypothesis therefore implies that the differentials vanish after the first page of the spectral sequence.
\end{proof}

\begin{remark}
When $c>0$, we do not need to know Theorem \ref{thm: K(BGamma  )}(\ref{part: diffs vanish}) in order to prove \ref{thm: K(BGamma  )}(\ref{part: convergence}) provided we have the $c=0$ case.
\end{remark}

Suppose Theorem \ref{thm: K(BGamma  )}(\ref{part: diffs vanish}) is true for groups of type $(a,b-1,0)$.
We will show this is true for $L$ a representation of type $(a,b,0)$.
First we introduce some notation.

\begin{notation}
\begin{itemize}
\item Given
\[
L=M_1\oplus\cdots\oplus M_a\oplus N_1\oplus\cdots \oplus N_b
\]
we make the following abreviations.
\begin{align*}
M_L&:=M_1\oplus \cdots \oplus M_a\\
N_L&:=N_1\oplus \cdots \oplus N_b
\end{align*}
\item For $d=1,\cdots, b$, define $L_d$ to be the representation of type $(a,b-1,0)$ where the $d$-th summand of $N_L$ is removed.
We have group homomorphisms
\[
\phi_d:L_d\rtimes\Z/p\rightarrow L\rtimes\Z/p\hspace{1cm} \psi_d:L\rtimes\Z/p\rightarrow L_d\rtimes\Z/p.
\]
Clearly, the composition $\psi_d\circ\phi_d$ is the identity.
We will henceforth denote $L_d\rtimes\Z/p$ by $\Gamma _d$.
It follows that there is the retraction of bundles below.
\begin{center}
\begin{tikzpicture}[scale=2]
\node (A) at (0,1) {$B\Gamma  _d$}; \node (B) at (1,1) {$B\Gamma $}; \node (C) at (2,1) {$B\Gamma _d$};
\node (D) at (0,0) {$B\Z/p$}; \node (E) at (1,0) {$B\Z/p$}; \node (F) at (2,0) {$B\Z/p$};
\path[->] (A) edge node[above]{$B\phi_{d}$} (B) (A) edge (D) (B) edge node[above]{$B\psi_{d}$} (C) (B) edge (E) (C) edge (F) (D) edge node[above]{$=$} (E) (E) edge node[above]{$=$} (F);
\end{tikzpicture}.
\end{center}
The maps $B\phi_{d}$ and $B\psi_{d}$ induce maps on cohomology, which we denote by $\phi_d^*$ and $\psi_d^*$.
The terms of the Atiyah-Hirzebruch-Serre spectral sequence for $B\Gamma _d$ will be denoted $E^{i,j}_{r,d}$.

\item By abuse of notation, we will define $\phi_d:\Z^{b-1}\rightarrow\Z^b$ and $\psi_d:\Z^b\rightarrow\Z^{b-1}$ similarly.

\item Let $\mathbf{r}=(r_1,\cdots,r_b)\in\Z^b$ be a $b$-tuple.
Define $A^{\mathbf{r}}:=\Lambda^{r_1} N_1^*\otimes\cdots\otimes\Lambda^{r_b}N_b^*$.
\item For a module $M$, let $\Lambda^{even}M$ be the sum of all $\Lambda^{2r}M$ and define $\Lambda^{odd}M$ similarly.

\item When one of the $r_i$ is neither $0$ nor $p$, then $\hat{H}^*\left(\Z/p;A^{\mathbf{r}}\right)=0$.
Indeed, Lemma \ref{lem: exterior is usually free} shows that $\Lambda^m\Z\left[\Z/p\right]$ is free when $1\le m\le p-1$ and the vanishing of the Tate cohomology follows from Proposition \ref{prop: simplifying Tate cohom}.
As we are not interested in all $\mathbf{r}\in\Z^b$, we define the following.
\[
\mc{R}_{b,m}:=\{(r_1,\cdots,r_b)\in\{0,p\}^b|r_1+\cdots+r_b\equiv m\text{ mod }2\}.
\]
For a subset $\mathbf{d}\subseteq\{1,\cdots,b\}$, define $\mc{R}_{b,m,\mathbf{d}}\subseteq\mc{R}_{b,m}$ to be those $b$-tuples such that $r_d=0$ if $d\in\mathbf{d}$.
\end{itemize}
\end{notation}

\begin{proof}[Proof of Theorem \ref{thm: K(BGamma  )} (1)]
We proceed by induction on $b$ with the case $b=0$ having been done in \cite[Lemma 3.3]{DavisLuckKTheory}.
Recall we are considering the Atiyah-Hirzebruch-Serre spectral sequence
\[
E_2^{i,j}=H^i\left(\Z/p;K^j\left(T^n_{\rho}\right)\right)\Rightarrow K^{i+j}(B\Gamma).
\]
We first check that, if $i>0$, then $d_2^{i,j}:E_2^{i,j}\rightarrow E_2^{i+2,j-1}$ is trivial.
Suppose that $j$ is even.
The term $E_2^{i,j}=H^i(\Z/p;\Lambda^{even}L^*)$ can be decomposed as a sum
\[
\bigoplus_{\mathbf{r}\in\mc{R}_{b,0}}H^i(\Z/p;\Lambda^{even}M_L^*\otimes A^{\mathbf{r}})\oplus\bigoplus_{\mathbf{r}\in\mc{R}_{b,1}}H^i(\Z/p;\Lambda^{odd}M_L^*\otimes A^{\mathbf{r}})
\]
and the term $E_2^{i+2,j-1}$ can be decomposed as a sum
\[
\bigoplus_{\mathbf{r}\in\mc{R}_{b,1}}H^{i+2}(\Z/p;\Lambda^{even}M_L^*\otimes A^{\mathbf{r}})\oplus\bigoplus_{\mathbf{r}\in\mc{R}_{b,0}}H^{i+2}(\Z/p;\Lambda^{odd}M_L^*\otimes A^{\mathbf{r}}).
\]

Moreover, we may identify the image of $\psi_d^*$ in $E_2^{i,j}$ with
\[
\bigoplus_{\mathbf{r}\in\mc{R}_{b,0,\{d\}}}H^i(\Z/p;\Lambda^{even}M_L^*\otimes A^{\mathbf{r}})\oplus\bigoplus_{\mathbf{r}\in\mc{R}_{b,1,\{d\}}}H^i(\Z/p;\Lambda^{odd}M_L^*\otimes A^{\mathbf{r}}).
\]

By the inductive hypothesis and the fact that $\phi_d^*\circ\psi_d^*:E_{r,d}^{i,j}\rightarrow E_{r,d}^{i,j}$ is the identity, these terms are in $E_{\infty}^{i,j}$.
It therefore remains to consider the effect of $d_2^{i,j}$ on the subgroup corresponding to $\mathbf{r}=(p,\cdots,p)$.
Thus we consider either a map
\[
H^i(\Z/p;\Lambda^{even}M_L^*)\rightarrow H^{i+2}(\Z/p;\Lambda^{odd}M_L^*)
\]
or
\[
H^i(\Z/p;\Lambda^{odd}M_L^*)\rightarrow H^{i+2}(\Z/p;\Lambda^{even}M_L^*)
\]
depending on the parity of $i$ and $b$.
In either case, Proposition \ref{prop: Davis-Luck group cohom} implies that these maps are $0$.
This completes the proof that $d_2^{i,j}=0$ when $i>0$ and when $j$ is even.
The case that $j$ is odd follows identically.

Now, suppose that $i=0$.
It follows from \cite[Lemma 1.10 (i)]{DavisLuckKTheory} that $H^2\left(\Z/p;\Lambda^{odd}M_L^*\otimes A^{(p,\cdots,p)}\right)=0$ so we just need to show that the restriction
\[
H^0\left(\Z/p;\Lambda^{odd}M_L^*\otimes A^{(p,\cdots,p)}\right)\rightarrow H^2\left(\Z/p;\Lambda^{even}M_L^*\otimes A^{(p,\cdots,p)}\right)
\]
of the differential must be trivial.
But by Proposition \ref{prop: norms and E_infty} and the fact that
\[
\hat{H}^0\left(\Z/p;\Lambda^{odd}M_L^*\otimes A^{(p,\cdots,p)}\right)=0,
\]
the left hand side must be in $E^{0,j}_\infty$.

The cases $r>2$ follows from a similar analysis.
\end{proof}

\subsection{Convergence}
In this section, we prove the second part of Theorem \ref{thm: K(BGamma  )} in the case $\Gamma$ is of type $(a,b,0)$.
The general case follows from this computation.
We induct by assuming that the second part is true for groups of type $(a,b',0)$ where $b'<b$.

Define the filtration for $K^m(B\Gamma )$ given by the spectral sequence by
\[
\cdots\subseteq F^2\subseteq F^1\subseteq F^0=K^m(B\Gamma ).
\]
Since $F^0/F^1=K^m\left(T_{\rho}^n\right)^{\Z/p}$ is a free abelian group, it suffices to compute $F^1$.

We first make a simplification.
Suppose that $N_L$ has rank $bp$ as an abelian group.
Let $\Gamma '$ be the group $\Gamma ':=(M_L\times\Z[\Z/p]^b)\rtimes\Z/p$.
Let $\cdots\subseteq G^2\subseteq G^1\subseteq G^0=K^m(B\Gamma ')$ denote the filtration on $K^m(B\Gamma ')$.

\begin{lemma}\label{lem: reduce to regular rep}
There is an isomorphism $F^1\cong G^1$.
\end{lemma}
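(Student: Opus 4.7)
The plan is to construct a group homomorphism $\Gamma' \to \Gamma$ that induces an isomorphism $F^1 \cong G^1$ by comparing Atiyah--Hirzebruch--Serre spectral sequences. The underlying principle is that $L'$ and $L$ become isomorphic after tensoring with $\Z_{(p)}$, and the filtration pieces above $F^0/F^1$ only see $p$-torsion information.

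First, I would construct a $\Z[\Z/p]$-module homomorphism $f \colon L' \to L$ that is the identity on the $M_L$-summand and, on each $\Z[\Z/p]$ factor, restricts to a map onto the corresponding $N_j = B_j \oplus_{b_j} \Z$ that is an isomorphism after tensoring with $\Z_{(p)}$. Writing $\Z[\Z/p] \cong \Z[\zeta] \oplus_1 \Z$, such a factor map is determined by a $\Z[\zeta]$-linear map $g_j \colon \Z[\zeta] \to B_j$, an element $\alpha_j \in B_j$, and an integer $e_j$ satisfying the twist-compatibility relation $g_j(1) = (\zeta - 1)\alpha_j + e_j b_j$. Since $\Z_{(p)}[\zeta]$ is a PID, $B_j \otimes \Z_{(p)}$ has a $\Z_{(p)}[\zeta]$-generator which one can arrange to lift to some $\beta_0 \in B_j$; taking $g_j(1) = \beta_0$ and using that $B_j/(1-\zeta) B_j \cong \Z/p$ is generated by $b_j$ forces $e_j$ to be a unit modulo $p$, after which $\alpha_j$ can be solved for. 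The resulting $f$ assembles into a homomorphism $\Gamma' \to \Gamma$ covering the identity on $\Z/p$.

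Second, the induced map $B\Gamma' \to B\Gamma$ lies over $B\Z/p$, and hence yields a morphism of Atiyah--Hirzebruch--Serre spectral sequences converging to $K^m(B\Gamma) \to K^m(B\Gamma')$. By Lemma \ref{lem: K(T) as Z/p mod}, the map on $E_2^{i,j}$ is obtained by applying $H^i(\Z/p;-)$ to appropriate exterior powers of the dual $f^* \colon L^* \to (L')^*$. Since $f^*$ is also a $p$-local isomorphism and the target is a $p$-torsion group for $i \geq 1$, this map is an isomorphism on $E_2^{i,j}$ for all $i \geq 1$. Combined with the vanishing of differentials from part (\ref{part: diffs vanish}) of Theorem \ref{thm: K(BGamma  )}, it is an isomorphism on $E_\infty^{i,j}$ for $i \geq 1$, and hence on each associated graded piece $F^s/F^{s+1} \to G^s/G^{s+1}$ for $s \geq 1$.

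Third, an induction on $s$ using the five lemma applied to the short exact sequences $0 \to F^s/F^{s+1} \to F^1/F^{s+1} \to F^1/F^s \to 0$ and their $\Gamma'$-analogues yields compatible isomorphisms $F^1/F^s \cong G^1/G^s$ for every $s \geq 1$. Each quotient is a finite $p$-group, so Mittag--Leffler holds, and combining Theorem \ref{thm: boardman convergence} (which identifies $F^1$ with $\varprojlim_s F^1/F^s$, and likewise for $G^1$) with the preceding isomorphisms produces the desired identification $F^1 \cong G^1$. The main obstacle is the construction of $f$ in the first step: verifying that the twist-compatibility relation can be solved with $e_j$ a $p$-local unit takes a short but nontrivial computation with the explicit twisted module structure; the rest of the argument is essentially formal given the degeneration already established.
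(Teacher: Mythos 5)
Your argument is correct and is essentially the paper's proof: both produce a map between $B\Gamma'$ and $B\Gamma$ over $B\Z/p$ induced by a $p$-local isomorphism of the lattices (the paper phrases it as an $N$-sheeted covering of tori with $N$ prime to $p$, you construct the module map explicitly), observe that the induced map on $E_2^{i,j}$ is an isomorphism for $i\ge1$ since those terms are $p$-torsion, and conclude by induction on the filtration together with Theorem \ref{thm: boardman convergence}. The only differences are cosmetic: the direction of the comparison map and your explicit verification of the twist-compatibility, which the paper absorbs into the already-established fact that $(N_L)_{(p)}\cong\Z[\Z/p]^b_{(p)}$.
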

\begin{proof}
Let $T^{bp}_{\Z[\Z/p]}$ denote the torus $T^{bp}$ with a $\Z/p$-action corresponding to the action of $\Z/p$ on the lattice $\Z[\Z/p]^b$.
Similarly, let $T^{bp}_{N_L}$ denote the torus with $\Z/p$-action corresponding to the lattice $N_L$.
Since $(N_L)_{(p)}\cong \Z[\Z/p]_{(p)}$, there is some $N$ prime to $p$ such that $T^{bp}_{N_L}$ is an $N$-sheeted regular cover of $T^{bp}_{\Z[\Z/p]}$.
From this, we obtain an $N$-sheeted regular cover $B\Gamma \rightarrow B\Gamma '$.
This is a map of bundles over $B\Z/p$ so this induces a map on the spectral sequences.
The induced maps $H^s\left(\Z/p;K^{m-s}\left(T^{a(p-1)}\times T^{bp}_{\Z[\Z/p]}\right)\right)\rightarrow H^s\left(\Z/p;K^{m-s}\left(T^{a(p-1)}\times T^{bp}_{N_L}\right)\right)$ are isomorphisms for $s\ge1$ since $N$ is prime to $p$.
Induction on $s$ via the diagram
\begin{center}
\begin{tikzpicture}
\node (A) at (0,1) {$G^s/G^{s+1}$}; \node (B) at (3,1) {$G^1/G^{s+1}$}; \node (C) at (6,1) {$G^1/G^s$};
\node (D) at (0,0) {$F^s/F^{s+1}$}; \node (E) at (3,0) {$F^1/F^{s+1}$}; \node (F) at (6,0) {$F^1/F^s$};
\path[->] (A) edge (B) (B) edge (C) (A) edge (D) (B) edge (E) (C) edge (F) (D) edge (E) (E) edge (F);
\end{tikzpicture}
\end{center}
shows that $\varprojlim F^1/F^s\cong \varprojlim G^1/G^s$.
It follows from Theorem \ref{thm: boardman convergence} that $F^1\cong G^1$.
\end{proof}

For the remainder of the section, we will assume
\[
\Gamma =(M_L\times\Z[\Z/p]^b)\rtimes\Z/p.
\]
In the computation of $F^1$, induction on $b$ will address the terms $H^i(\Z/p;\Lambda M_L^*\otimes A^{\mathbf{r}})$ when $\mathbf{r}\neq(p,\cdots,p)$.
In order to deal with the terms with $\mathbf{r}=(p,\cdots,p)$, we will need to consider a sphere bundle quotient of $B\Gamma $.
Recall we have assumed that $N_L\cong\Z[\Z/p]^b$ so $T^{bp}\cong\R[\Z/p]^b/N_L$.
Let $x_0\in T^{bp}$ denote the image of $0\in\R[\Z/p]^b$ and let $D$ denote a $\Z/p$-invariant disk neighborhood of $x_0$.
Then the quotient $T^{bp}/\left(T^{bp}\setminus D\right)$ is the representation sphere of the regular representation $\R[\Z/p]^b$.
Using this, we construct a map
\[
B\Gamma =T^{bp}\times_{\Z/p}\left(T^{a(p-1)}\times E\Z/p\right)\rightarrow S^{bp}\times_{\Z/p}\left(T^{a(p-1)}\times E\Z/p\right)
\]
of bundles over $B\Z/p$.
Let $E$ denote the sphere bundle $S^{bp}\times_{\Z/p}\left(T^{a(p-1)}\times E\Z/p\right)$.

\begin{lemma}\label{lem: Thom isomorphism}
There is an isomorphism
\[
K^m(E)\cong K^m(B(M_L\rtimes\Z/p))\oplus \tilde{K}^{m+b}(B(M_L\rtimes\Z/p)).
\]
Moreover the spectral sequence
\[
E_2^{i,j}=H^i(\Z/p;K^j(T^{a(p-1)}\times S^{bp}))\Rightarrow K^{i+j}(E)
\]
has trivial differentials $d_r^{i,j}$ for $r\ge2$.
\end{lemma}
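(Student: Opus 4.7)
The plan is to identify $E$ with the fiberwise one-point compactification of the vector bundle
\[
\xi := \R[\Z/p]^b \times_{\Z/p} \left(T^{a(p-1)}\times E\Z/p\right)
\]
over $B := T^{a(p-1)}\times_{\Z/p}E\Z/p \simeq B(M_L\rtimes\Z/p)$, and then to apply a $K$-theoretic Thom isomorphism. Since $S^{bp}\cong (\R[\Z/p]^b)^+$ has a $\Z/p$-fixed basepoint, it provides a section $s:B\hookrightarrow E$, while collapsing $S^{bp}$ to that basepoint gives a retraction $\pi:E\to B$ with $\pi\circ s=\op{id}_B$. The cofiber sequence $B\xrightarrow{s}E\to E/s(B)\simeq \op{Th}(\xi)$ therefore splits in $K$-theory and yields $K^m(E)\cong K^m(B)\oplus\tilde K^m(\op{Th}(\xi))$.

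To identify the second summand I would show that $\xi$ is $K$-orientable. Writing $\R[\Z/p]=\R\oplus I$ with $I$ the augmentation ideal, the assumption that $p$ is odd permits a $\Z/p$-invariant complex structure on $I$ (pair the $\zeta^k$- and $\zeta^{-k}$-isotypic components of $I\otimes\C$ for each $1\le k\le (p-1)/2$). Hence $\xi = \underline{\R}^b\oplus \xi_I$ with $\xi_I$ a complex bundle, so $\xi$ inherits a $K$-orientation, and the Thom isomorphism produces $\tilde K^m(\op{Th}(\xi))\cong K^{m-bp}(B)=K^{m+b}(B)$ using $\Z/2$-periodicity and $bp\equiv b\pmod{2}$ (as $p$ is odd). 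Combined with the splitting, this proves the first claim.

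For the vanishing of differentials, observe that since $\op{Aut}(\Z)\cong\Z/2$ admits no nontrivial map from $\Z/p$ when $p$ is odd, the $\Z/p$-action on $K^*(S^{bp})$ is trivial. The K\"unneth decomposition is then $\Z/p$-equivariant, so on the $E_2$-page
\[
E_2^{i,j}(E)\cong H^i(\Z/p;K^j(T^{a(p-1)}))\oplus H^i(\Z/p;K^{j-bp}(T^{a(p-1)})).
\]
Because $\pi$ and $s$ are maps of fibrations over $B\Z/p$, by naturality they split the Atiyah-Hirzebruch-Serre spectral sequence for $E$ compatibly with this decomposition, and the first summand is identified with the spectral sequence for $B\to B\Z/p$. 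The Thom class of $\xi$, defined globally over $B$, identifies the second summand with the Thom-Serre spectral sequence for $\op{Th}(\xi)\to B\Z/p$; since $\xi|_{T^{a(p-1)}}$ is trivial, this latter spectral sequence is merely a degree shift of the one for $B$. The base case $b=0$ is \cite[Lemma 3.3]{DavisLuckKTheory}, whose spectral sequence has vanishing differentials, so by induction both summands of $E_2^{i,j}(E)$ do too.

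The main obstacle is verifying that the cofiber splitting and the Thom isomorphism are strictly compatible with the spectral sequence filtration coming from $B\Z/p$, so that the decomposition holds at the spectral-sequence level and not merely on $E_\infty$. This should follow from naturality (all the maps in sight respect the fibrations over $B\Z/p$, and $\xi$ and its Thom class live over $B$), but warrants a careful bookkeeping using preimages of skeleta of $B\Z/p$.
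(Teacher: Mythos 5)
Your argument for the first claim is essentially the paper's: the section at infinity of the fiberwise compactification, the split cofiber sequence giving $K^m(E)\cong K^m(B)\oplus\tilde K^m(\op{Th}(\xi))$, and the invariant complex structure on the augmentation ideal providing the $K$-orientation. The one step the paper spells out and you defer to ``bookkeeping'' is that the base $B=T^{a(p-1)}\times_{\Z/p}E\Z/p$ is an infinite CW complex, so ``the Thom isomorphism produces $\tilde K^m(\op{Th}(\xi))\cong K^{m-bp}(B)$'' is not automatic: the paper filters by skeleta of $E\Z/p$, applies the Thom isomorphism for the complex summand over each compact piece, and verifies the inverse system is Mittag--Leffler so that $\tilde K^m$ of the Thom space is the inverse limit over skeleta; you need this (or the equivalent five-lemma comparison of Milnor sequences) to pass to the limit. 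A cosmetic point: what this actually yields is unreduced $K^{m+b}(B)$, exactly as your computation says, so you should not claim it ``proves the first claim'' verbatim; the $\tilde K^{m+b}$ versus $K^{m+b}$ discrepancy is harmless downstream because the extra free summand lives in $E_\infty^{0,*}=K^m\left(T^{a(p-1)}\times S^{bp}\right)^{\Z/p}$ and only the filtration above degree zero is ever used. For the vanishing of differentials your route genuinely differs from the paper's. You split the spectral sequence for $E$ into the spectral sequence for $B\to B\Z/p$ plus a Thom-shifted copy of it and quote the $b=0$ case \cite[Lemma 3.3]{DavisLuckKTheory}; this is conceptually clean but requires checking that cup product with the global Thom class is compatible with the filtration by preimages of skeleta of $B\Z/p$, so that it induces an isomorphism of spectral sequences from $E_1$ on. The paper instead just reruns the $E_2$-term analysis from the proof of Theorem \ref{thm: K(BGamma  )}(\ref{part: diffs vanish}): since $\Z/p$ acts trivially on $K^*(S^{bp})$, the coefficients $K^j\left(T^{a(p-1)}\times S^{bp}\right)$ are two copies of $\Lambda^*M_L$, and Proposition \ref{prop: Davis-Luck group cohom} together with the norm argument kills all differentials directly, with no comparison of spectral sequences needed. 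Either version works; yours buys a reduction to the known $b=0$ case at the cost of the filtration compatibility check, the paper's avoids that check at the cost of repeating the group-cohomology computation.
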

\begin{proof}
There is a section $B(M_L\rtimes\Z/p)\rightarrow E$ such that $Th_0:=E/B(M_L\rtimes\Z/p)$ is the Thom space of the real vector bundle $\R[\Z/p]^b\times_{M_L\rtimes \Z/p}(EM_L\times E\Z/p)=\R[\Z/p]^b\times_{\Z/p}(T^{a(p-1)}\times E\Z/p)$.
This gives us an exact sequence
\[
\cdots\rightarrow\tilde{K}^0(Th_0)\rightarrow K^0(E)\rightarrow K^0(B(M_L\rtimes\Z/p))\rightarrow\tilde{K}^1(Th_0)\rightarrow K^1(E)\rightarrow\cdots
\]
in which the maps $K^m(E)\rightarrow K^m(B(M_L\rtimes\Z/p))$ are split surjective.
We obtain
\[
K^m(E)\cong K^m(B(M_L\rtimes\Z/p))\oplus\tilde{K}^m(Th_0).
\]

Note that $\R\otimes N_L\cong \R[\Z/p]^b$ so $\R\otimes N_L\cong \R^b\oplus V$ where $\R^b$ has trivial $\Z/p$ action and $V$ is a real $\Z/p$-representation obtained by forgetting the complex structure of a complex $\Z/p$-representation.
Writing $Th_1$ as the Thom space of $V\times_{\Z/p}(T^{a(p-1)}\times E\Z/p)$, we have $Th_0=\Sigma^b Th_1$.
We need to show that $\tilde{K}^m(B(M_L\rtimes\Z/p))\cong\tilde{K}^m(Th_1)$.

Define $E^n:=V\times_{\Z/p}\left(T^{a(p-1)}\times E\Z/p^{(n)}\right)$ where $E\Z/p^{(n)}$ denotes the $n$-skeleton of $E\Z/p$ and let $Th^n_1$ denote the Thom space of $E^n$ considered as a vector bundle over $T^{a(p-1)}\times_{\Z/p}E\Z/p^{(n)}$.
The Thom isomorphism for $K$-theory implies
\[
K^m(Th_1^n)\cong K^m(T^{a(p-1)}\times_{\Z/p}E\Z/p^{(n)}).
\]
This induces an isomorphism on inverse systems indexed by $n$.
In particular, $K^m(Th^n_1)$ is Mittag-Leffler.
Thus,
\[
\tilde{K}^m(Th_1)\cong\varprojlim_{n}\tilde{K}^m(Th^n_1)\cong \tilde{K}^m(B(M_L\rtimes\Z/p)).
\]

The proof of the second part is similar to the proof of the first part of Theorem \ref{thm: K(BGamma  )}.
\end{proof}

\begin{proof}[Proof of Theorem \ref{thm: K(BGamma  )} (2) Case $b=1$]
Denote the filtration on $K^m(E)$ coming from the fibration $E\rightarrow B\Z/p$ by
\[
\cdots\subseteq G^2\subseteq G^1\subseteq G^0=K^m(E).
\]
It follows from Lemma \ref{lem: Thom isomorphism} and the $b=0$ case of Theorem \ref{thm: K(BGamma  )} that $G^1\cong\hat{\Z}_p^{(p-1)p^a}$.

Suppose $i\ge1$ and $j$ is odd.
The $E_2^{i,j}$ term for the Atiyah-Hirzebruch-Serre spectral sequence for $K$-theory of the fibration $B\Gamma\rightarrow B\Z/p$ is
\[
E_2^{i,j}\cong H^i(\Z/p;\Lambda^{even}M_L^*\otimes\Lambda^p\Z[\Z/p])\oplus H^i(\Z/p;\Lambda^{odd}M_L^*\otimes \Lambda^0\Z[\Z/p]).
\]
On the other hand, the corresponding term for the fibration $E\rightarrow B\Z/p$ is
\[
E_2^{i,j}\cong H^i(\Z/p;\Lambda^{even}M_L^*\otimes H^p(S^p))\oplus H^i(\Z/p;\Lambda^{odd}M_L^*\otimes H^0(S^p)).
\]
The decompositions of the coefficients above follow from the K\"unneth formula for $K$-theory \cite{AtiyahKunneth}.
Naturality of the K\"unneth formula shows that these decompositions respect the $\Z/p$-module structures and that $B\Gamma \rightarrow E$ induces isomorphisms on $E_2^{i,j}$.
A similar argument shows that this is an isomorphism when $j$ is even as well.
Therefore, Theorem \ref{thm: boardman convergence} implies that $G^1\cong F^1$.
\end{proof}

\subsubsection{The $b>1$ case}
Assume now that $b>1$ and that Theorem \ref{thm: K(BGamma  )} is true for groups of type $(a,b',0)$ where $b'<b$.
We will need to make some more observations.
\begin{lemma}\label{lem: splitting of filtered ab groups}
Suppose we have filtered abelian groups
\begin{align*}
\cdots F^s\subseteq F^{s-1}\subseteq\cdots\subseteq& F^1\\
\cdots G^s\subseteq G^{s-1}\subseteq\cdots\subseteq& G^1
\end{align*}
with a filtration preserving split injection $G^1\rightarrow F^1$ whose splitting also preserves the filtration.
Then,
\[
\cdots F^s/G^s\subseteq F^{s-1}/G^{s-1}\subseteq\cdots\subseteq F^1/G^1
\]
is a filtered abelian group with slices
\[
(F^s/G^s)/(F^{s+1}/G^{s+1})\cong(F^s/F^{s+1})/(G^s/G^{s+1}).
\]
\end{lemma}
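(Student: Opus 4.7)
The plan is to use the filtration-preserving splitting of $G^1 \hookrightarrow F^1$ to decompose each $F^s$ compatibly as an internal direct sum $\iota(G^s) \oplus K^s$, after which the filtration and slice statements drop out by linear algebra.

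First I would unpack the hypotheses. Write $\iota : G^1 \hookrightarrow F^1$ for the split injection and $\sigma : F^1 \to G^1$ for its splitting, so $\sigma \circ \iota = \mathrm{id}_{G^1}$, and the filtration-preservation conditions are $\iota(G^s) \subseteq F^s$ and $\sigma(F^s) \subseteq G^s$. Setting $K^s := \ker(\sigma|_{F^s})$, the restricted sequence $0 \to K^s \to F^s \xrightarrow{\sigma} G^s \to 0$ is exact (surjectivity comes from $\sigma \circ \iota = \mathrm{id}$) and split by $\iota|_{G^s}$. This yields an internal direct sum $F^s = \iota(G^s) \oplus K^s$. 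Moreover $K^{s+1} = F^{s+1} \cap \ker\sigma \subseteq F^s \cap \ker\sigma = K^s$, so the $K^s$ form a descending filtration compatible with the decomposition.

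Under the resulting identification $F^s/\iota(G^s) \cong K^s$, the inclusion $F^{s+1}/G^{s+1} \hookrightarrow F^s/G^s$ corresponds to $K^{s+1} \hookrightarrow K^s$, which confirms that the $F^s/G^s$ form a filtration in the claimed sense. For the slice identification, the decompositions $F^s = \iota(G^s) \oplus K^s$ and $F^{s+1} = \iota(G^{s+1}) \oplus K^{s+1}$ give
\[
F^s/F^{s+1} \;\cong\; G^s/G^{s+1} \;\oplus\; K^s/K^{s+1},
\]
and quotienting out the $G^s/G^{s+1}$ summand yields $K^s/K^{s+1} \cong (F^s/G^s)/(F^{s+1}/G^{s+1})$, which is exactly the desired isomorphism. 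The argument is essentially formal; the only real content is the observation that a filtration-preserving splitting automatically produces filtration-preserving complements $K^s$, and I do not anticipate any genuine obstacle.
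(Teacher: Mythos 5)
Your proof is correct and takes essentially the same route as the paper. The paper's one-line observation is that the splitting forces $G^1\cap F^s=G^s$ (identifying $G^s$ with its image in $F^s$), from which the quotient filtration and slice identification follow by isomorphism theorems; your construction of explicit complements $K^s=\ker(\sigma|_{F^s})$ with $F^s=\iota(G^s)\oplus K^s$ is just a more hands-on way of recording the same compatibility, and the two arguments are interchangeable.
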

\begin{proof}
The splitting implies $G^1\cap F^s=G^s$, where we take intersections in $F^1$ and identify the $G^s$ with their images in $F^1$.
The result follows by considering the filtration
\[
\cdots F^s/(G^1\cap F^s)\subseteq F^{s-1}/(G^1\cap F^{s-1})\subseteq\cdots \subseteq F^1/G^1.
\]
\end{proof}

Denote the filtration on $K^m(B\Gamma _d)$ corresponding to the spectral sequence for $B\Gamma _d$ by
\[
\cdots\subseteq F^2_d\subseteq F^1_d\subseteq F^0_d=K^m(B\Gamma _d).
\]
The maps $F^s_d\rightarrow F^s$ are split injections.
In particular, $F^s_1\rightarrow F^s\rightarrow F^s/F_1$ is a split short exact sequence.
Now consider the square
\[
\begin{tikzpicture}[scale = 1.5]
\node (A) at (0,1) {$F^s_2$};\node (B) at (2,1) {$F^s$};
\node (C) at (0,0) {$F^s_2/(F^s_2\cap F^s_1)$};\node (D) at (2,0) {$F^s/F^s_1$};
\path[->] (A) edge (B) (A) edge (C) (B) edge (D) (C) edge (D);
\end{tikzpicture}.
\]
The right vertical map and the top map split which gives a splitting for the bottom map.
Therefore, there is a split short exact sequence
\[
0\rightarrow F^s_2/(F^s_2\cap F^s_1)\rightarrow F^s/F^s_1\rightarrow F^s/(F^s_1+F^s_2)\rightarrow0
\]
where $F^s_1+F^s_2$ denotes the subgroup of $F^s$ generated by $F^s_1$ and $F^s_2$.
Continuing this way, we obtain split short exact sequences
\[
0\rightarrow F^s_d/(F^s_d\cap(F^s_1+\cdots+F^s_{d-1})\rightarrow F^s/(F^s_1+\cdots+F^s_{d-1})\rightarrow F^s/(F^s_1+\cdots+F^s_d)\rightarrow 0.
\]
This shows that $F^s\rightarrow F^s/(F^s_1+\cdots +F^s_b)$ is split.
The group $F^s/(F^s_1+\cdots +F^s_b)$ is a filtration for the subgroup of $K^m(B\Gamma)$ with slices $H^i\left(\Z/p;(\Lambda N_L)\otimes A^{\mathbf{r}}\right)$ where at least one of $r_1,r_2,\cdots,r_d$ is $0$.

\begin{lemma}\label{lem: dim of G_1+...+G_k}
There is an isomorphism
\[
F^1_1+\cdots+F^1_b\cong\hat{\Z}^{(p-1)(p^a)\nu_b}_p
\]
where
\[
\nu_b:=\binom{b}{1}2^{b-2}-\binom{b}{2}2^{b-3}+\cdots+(-1)^{(b-1)+1}\binom{b}{b-1}2^0+\kappa_{b,m}.
\]
Here, $\kappa_{b,m}=(-1)^{b+1}$ when $m$ is even and $\kappa_{b,m}=0$ when $m$ is odd.
\end{lemma}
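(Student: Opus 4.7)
The plan is to compute the rank of $F^1_1+\cdots+F^1_b$ by inclusion-exclusion, using the inductive hypothesis on $b$ together with the splittings already established.

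For any subset $S\subseteq\{1,\ldots,b\}$, let $\Gamma_S$ denote the subgroup of $\Gamma$ of type $(a,b-|S|,0)$ obtained by removing all summands $N_d$ with $d\in S$, with projection $\psi_S:\Gamma\to\Gamma_S$ and inclusion $\phi_S:\Gamma_S\to\Gamma$ satisfying $\psi_S\circ\phi_S=\op{id}$. Since $\psi_S$ factors through each $\Gamma_d$ with $d\in S$, the image of $\psi_S^*$ applied to $F^1_{\Gamma_S}$ lies inside $\bigcap_{d\in S}F^1_d$, and a diagram chase using the retractions $\phi_d^*\circ\psi_d^*=\op{id}$ yields the reverse containment. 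Thus $\bigcap_{d\in S}F^1_d$ is a split subgroup of $F^1$ isomorphic to $F^1_{\Gamma_S}$. When $|S|<b$, the inductive hypothesis gives $\rank F^1_{\Gamma_S}=(p-1)p^a2^{b-|S|-1}$, while when $|S|=b$ the group $\Gamma_S$ is of type $(a,0,0)$ and the base case \cite[Theorem 3.1]{DavisLuckKTheory} yields $\rank F^1_{\Gamma_S}=(p-1)p^a$ if $m$ is even and $0$ if $m$ is odd.

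To lift these rank computations to the rank of the sum, I pass to the associated graded. On each slice the image of $F^1_d$ is the subsum of the direct sum decomposition
\[
E_\infty^{i,m-i}=\bigoplus_{\mathbf{r}\in\{0,p\}^b}H^i\left(\Z/p;\Lambda^\bullet M_L\otimes A^{\mathbf{r}}\right)
\]
indexed by $\mathbf{r}$ with $r_d=0$. Because inclusion-exclusion of ranks is valid for subcoproducts of such a direct sum, and because the iterated split short exact sequences produced just before \reflem{lem: splitting of filtered ab groups} lift this identity from the associated graded to $F^1$ itself, we obtain
\[
\rank(F^1_1+\cdots+F^1_b)=\sum_{k=1}^{b}(-1)^{k+1}\binom{b}{k}\rank F^1_{\Gamma_S}
\]
for any $S$ of size $k$.

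Substituting the rank values, the terms with $1\le k\le b-1$ contribute $(p-1)p^a\sum_{k=1}^{b-1}(-1)^{k+1}\binom{b}{k}2^{b-k-1}$, while the $k=b$ term contributes $(-1)^{b+1}(p-1)p^a$ when $m$ is even and $0$ when $m$ is odd, which is precisely $(p-1)p^a\kappa_{b,m}$. Combining these yields $(p-1)p^a\nu_b$, as claimed. The main obstacle is the justification of inclusion-exclusion for ranks, which fails for arbitrary collections of subgroups; the point is that in our setting the $F^1_d$ arise from a compatible direct sum decomposition on the associated graded, and the splittings lift this combinatorial structure back to $F^1$.
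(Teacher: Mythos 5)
Your proof is correct and follows essentially the same approach as the paper: identify the intersections $\bigcap_{d\in S}F^1_d$ with split copies of $F^1_{\Gamma_S}$, compute their ranks from the inductive hypothesis, and apply inclusion-exclusion, which is exactly the Euler characteristic of the paper's Koszul-type resolution
\[
0\to F^1_{1,\ldots,b}\to\cdots\to\bigoplus_\alpha F^1_\alpha\to\sum_\alpha F^1_\alpha\to 0
\]
after tensoring with $\hat{\Q}_p$. Incidentally, you compute the correct rank $(p-1)p^a 2^{b-|S|-1}$ for $|S|<b$; the paper's statement $F^1_{\alpha_1,\ldots,\alpha_d}\cong\hat{\Z}_p^{(p-1)p^a 2^{d-1}}$ appears to be a typo for $2^{b-d-1}$, which is what the displayed formula for $\nu_b$ requires.
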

\begin{proof}
Let us abbreviate $F^1_{\alpha_1,\cdots,\alpha_d}:=F^1_{\alpha_1}\cap\cdots\cap F^1_{\alpha_d}$ where $\alpha_1,\cdots,\alpha_d$ are distinct integers in $\{1,\cdots,b\}$ and where the intersection occurs in $F^1\subseteq K^m(B\Gamma )$.
For $d<b$, each $F^1_{\alpha_1,\cdots,\alpha_d}$ is the image of the $p$-adic part of $K^m(B((M_L\oplus\Z[\Z/p]^{b-d})\rtimes\Z/p))$ under an appropriate retraction $B\Gamma \rightarrow B((M_L\oplus\Z[\Z/p]^{b-d})\rtimes\Z/p)$.
For $d=b$, the group $F^1_{1,\cdots,b}$ is the image of the $p$-adic part of $K^m(B(M_L\rtimes\Z/p))$ under the projection $B\Gamma \rightarrow B(M_L\rtimes\Z/p)$.

One checks that there is the following resolution.
\[
0\rightarrow F^1_{1,\cdots,b}\rightarrow\bigoplus_{\alpha=1}^b F^1_{1,\cdots,\hat{\alpha},\cdots,b}\rightarrow\cdots\rightarrow\bigoplus_{\alpha<\beta}F^1_{\alpha,\beta}\rightarrow\bigoplus_{\alpha=1}^bF^1_{\alpha}\rightarrow\sum_{\alpha=1}^b F^1_\alpha\rightarrow0
\]
Note that the $F^1_{\alpha_1,\cdots,\alpha_d}$ are finitely generated free $\hat{\Z}_p$-modules.
Moreover, any abelian group homomorphism of such modules is a $\hat{\Z}_p$-module homomorphism.\footnote{One examines the bijections $\Hom_{\Z}(\hat{\Z}_p,\hat{\Z}_p)=\varprojlim_{n}\Hom_{\Z}(\hat{\Z}_p,\Z/p^n)=\varprojlim_{n}\Hom{\Z}(\Z/p^n,\Z/p^n)=\varprojlim_{n}\Hom_{\Z}(\Z,\Z/p^n)=\Hom_{\Z}(\Z,\hat{\Z}_p)=\Hom_{\hat{\Z}_p}(\hat{\Z}_p,\hat{\Z}_p)$ where the second bijection follows from the fact that $\hat{\Z}_p$ has a unique subgroup of index $p^n$.}
As an abelian group, $\sum_{\alpha=1}^b F^1_\alpha$ is torsion-free; the retractions $\sum_{\alpha=1}^b F^1_\alpha\rightarrow F^1_\alpha$ give an injective homomorphism $\sum_{\alpha=1}^b F^1_\alpha\rightarrow\bigoplus_{\alpha=1}^b F^1_\alpha$ whose target is torsion-free.
Since $\hat{\Z}_p$ is a principal ideal domain, we see that $\sum_{\alpha=1}^bF^1_\alpha$ is a finitely generated free $\hat{\Z}_p$-module.

The result follows from tensoring with $\hat{\Q}_p$ and counting dimensions.
For $d=1,\cdots,b-1$, the induction hypothesis implies $F^1_{\alpha_1,\cdots,\alpha_d}\cong\hat{\Z}^{(p-1)p^a2^{d-1}}_p$.
For $d=b$, we have $F^1_{1,\cdots,b}\cong\hat{\Z}_p^{(p-1)p^a}\subseteq\tilde{K}^m(B(M_L\rtimes\Z/p))$, which accounts for the term $\kappa_{b,m}$.
\end{proof}

\begin{lemma}\label{lem: combo}
If $b\not\equiv m$ modulo $2$, $\nu_b=2^{b-1}$.
If $b\equiv m$ modulo $2$, $\nu_b=2^{b-1}-1$.
\end{lemma}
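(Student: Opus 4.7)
The plan is to recognize $\nu_b - \kappa_{b,m}$ as a truncation of the binomial expansion of $(2-1)^b$. Writing
\[
S := \sum_{k=1}^{b-1}(-1)^{k+1}\binom{b}{k}2^{b-1-k},
\]
the expression for $\nu_b$ becomes $S + \kappa_{b,m}$, so the first step is to evaluate $S$ in closed form. Multiplying by $2$ gives
\[
2S = \sum_{k=1}^{b-1}(-1)^{k+1}\binom{b}{k}2^{b-k},
\]
and this sum differs from $\sum_{k=0}^{b}(-1)^{k+1}\binom{b}{k}2^{b-k}$ only in the $k=0$ and $k=b$ terms.

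Next, I would invoke the binomial identity $1 = (2-1)^b = \sum_{k=0}^{b}(-1)^k\binom{b}{k}2^{b-k}$ to see that $\sum_{k=0}^{b}(-1)^{k+1}\binom{b}{k}2^{b-k} = -1$. Subtracting the endpoint contributions $-2^b$ (from $k=0$) and $(-1)^{b+1}$ (from $k=b$) yields
\[
2S = -1 + 2^b - (-1)^{b+1} = 2^b - 1 + (-1)^b,
\]
and hence
\[
S = 2^{b-1} - \frac{1-(-1)^b}{2}.
\]
So $S = 2^{b-1}$ when $b$ is even and $S = 2^{b-1} - 1$ when $b$ is odd.

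Finally, I would combine this with the definition of $\kappa_{b,m}$ in a four-way parity check on $(b,m)$. When $m$ is even, $\kappa_{b,m} = (-1)^{b+1}$, and one sees $\nu_b = 2^{b-1}-1$ if $b$ is even (matching $b\equiv m\pmod 2$) and $\nu_b = 2^{b-1}$ if $b$ is odd (matching $b\not\equiv m\pmod 2$). When $m$ is odd, $\kappa_{b,m} = 0$, giving $\nu_b = S$ directly, which again agrees with the claimed dichotomy. There is no real obstacle here; the only delicate part is bookkeeping the signs on the boundary terms of the binomial sum and matching the four parity cases to the statement.
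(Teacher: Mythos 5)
Your proposal is correct and is the same argument the paper intends: the paper's proof is the one-line remark that this "follows from expanding $1 = (2-1)^b$," and you have simply carried out that expansion, isolated the boundary terms $k=0$ and $k=b$, and done the parity bookkeeping with $\kappa_{b,m}$. The computation $2S = 2^b - 1 + (-1)^b$ and the resulting four-way case check are all accurate.
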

\begin{proof}
This follows from expanding $1=(2-1)^b$.
\end{proof}

\begin{proof}[Proof of Theorem \ref{thm: K(BGamma  )} (2) Case $b>1$]
Suppose that $b\not\equiv m$ modulo $2$.
Then $F^1/\left(\sum_{\alpha=1}^bF^1_\alpha\right)$ is filtered with slices
\[
\left(F^s/\left(\sum_{\alpha=1}^bF^s_\alpha\right)\right)/\left(F^{s+1}/\left(\sum_{\alpha=1}^bF^{s+1}_\alpha\right)\right)\cong\begin{cases}H^s(\Z/p;\Lambda^{even}M_L)&s\text{ odd}\\
H^s(\Z/p;\Lambda^{odd} M_L)&s\text{ even} \end{cases}.
\]
In either case, we see that these slices are trivial.
Since $\cap_{s=0}^\infty F^s=0$, we see that $F^1\cong\sum_{\alpha=1}^bF^1_\alpha$.

Now, suppose that $b\equiv m$ modulo $2$.
Then we have slices
\[
\left(F^s/\left(\sum_{\alpha=1}^bF^s_\alpha\right)\right)/\left(F^{s+1}/\left(\sum_{\alpha=1}^bF^{s+1}_\alpha\right)\right)\cong\begin{cases}H^s(\Z/p;\Lambda^{even}M_L\otimes A^{(p,\cdots,p)})&s\text{ even}\\
H^s(\Z/p;\Lambda^{odd} M_L\otimes A^{(p,\cdots,p)})&s\text{ odd} \end{cases}.
\]
Recall that we have assumed the submodule $N_L$ is isomorphic to $\Z[\Z/p]^b$ so there is the sphere bundle quotient $E$ discussed in Lemma \ref{lem: Thom isomorphism}.
As before, we let $\{G^s\}$ denote the filtration on $K^m(E)$.
Note that there is a split injection of filtered abelian groups $F^1_{1,\cdots, b}\rightarrow G^1$.
One checks that $G^1/F^1_{1,\cdots,b}\cong\hat{\Z}_p^{(p-1)p^a}$ and that the map of filtered abelian groups $G^1\rightarrow F^1\rightarrow F^1/(\sum_{\alpha=1}^bF^1_\alpha)$ factors through $G^1/F^1_{1,\cdots,b}$.
The map $G^1/F^1_{1,\cdots,b}\rightarrow F^1/(\sum_{\alpha=1}^bF^1_\alpha)$ induces an isomorphism on slices.
Using that $G^1\cong\varprojlim_{s}G^1/G^s$ and that $G^s\cong F^s_{1,\cdots,b}\oplus G^s/F^s_{1,\cdots,b}$ we get
\[
G^1/F^1_{1,\cdots,p}\cong\varprojlim_{s}\left(G^1/F^1_{1,\cdots,p}\right)/\left(G^s/F^s_{1,\cdots,p}\right)
\]
The isomorphism
\[
F^1/\left(\sum_{\alpha=1}^bF^1_\alpha\right)\cong \varprojlim_{s}\left(F^1/\left(\sum_{\alpha=1}^bF^1_\alpha\right)\right)/\left(F^{s}/\left(\sum_{\alpha=1}^bF^{s}_\alpha\right)\right).
\]
follows similarly.
Therefore, we obtain an isomorphism
\[
\hat{\Z}_p^{(p-1)p^a}\cong G^1/F^1_{1,\cdots,p}\cong F^1/\left(\sum_{\alpha}^bF^1_\alpha\right).
\]
Using Lemma \ref{lem: dim of G_1+...+G_k} and Lemma \ref{lem: combo}, we obtain $F^1\cong\hat{\Z}_p^{(p-1)p^a2^{b-1}}$ as desired.
\end{proof}

\subsection{Corollaries of the $K$-theory Computation}

We record some consequences of Theorem \ref{thm: K(BGamma  )} that we will need for computing the $L$-groups and the structure sets.
These results are proven for groups of type $(a,0,0)$ in \cite{DavisLuckKTheory} so we will assume that either $b\neq0$ or $c\neq 0$ in this section.
We need to import the following results, which can be found in \cite{DavisLuckKTheory}.

\begin{lemma}\label{lem: Ext of rep ring}
For a finite group $G$, there is an isomorphism
\[
\op{Ext}^i_{R_{\C}(G)}(M,R_{\C}(G))\cong\op{Ext}^i_{\Z}(M,\Z)
\]
for $i\ge0$.
\end{lemma}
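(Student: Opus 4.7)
The plan is to realize $R_{\C}(G)$ as a Frobenius $\Z$-algebra, producing an $R_{\C}(G)$-module isomorphism $R_{\C}(G) \cong \op{Hom}_{\Z}(R_{\C}(G),\Z)$, and then pass to derived functors via the standard change-of-rings adjunction, thereby reducing the computation of $\op{Ext}^{i}_{R_{\C}(G)}(M,R_{\C}(G))$ to the $\Z$-linear computation of $\op{Ext}^{i}_{\Z}(M,\Z)$.

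I would begin by defining the trace $\tau\colon R_{\C}(G)\to \Z$ as the functional picking out the multiplicity of the trivial representation, so $\tau(V)=\dim_{\C}V^{G}$. For irreducible complex representations $\rho,\sigma$ of $G$, Schur orthogonality gives $\tau(\rho\cdot\sigma)=\delta_{\rho,\sigma^{\ast}}$, where $\sigma^{\ast}$ is the dual representation. Since the irreducibles form a $\Z$-basis of $R_{\C}(G)$, the Gram matrix of the pairing $(a,b)\mapsto\tau(ab)$ in this basis is a permutation matrix, hence unimodular. The resulting map
\[
\Phi\colon R_{\C}(G)\longrightarrow \op{Hom}_{\Z}(R_{\C}(G),\Z),\qquad \Phi(a)(x)=\tau(ax),
\]
is thus a $\Z$-linear isomorphism, and its $R_{\C}(G)$-linearity is immediate from the commutativity of $R_{\C}(G)$, since $(r\cdot\Phi(a))(x)=\tau(arx)=\tau((ra)x)=\Phi(ra)(x)$.

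With $\Phi$ in hand, the adjunction between restriction and coextension of scalars along $\Z \hookrightarrow R_{\C}(G)$ yields, for every $R_{\C}(G)$-module $M$ and every $\Z$-module $N$, a natural isomorphism
\[
\op{Hom}_{R_{\C}(G)}\!\bigl(M,\op{Hom}_{\Z}(R_{\C}(G),N)\bigr)\cong \op{Hom}_{\Z}(M,N).
\]
Because $R_{\C}(G)$ is finitely generated free over $\Z$, the functor $\op{Hom}_{\Z}(R_{\C}(G),-)$ is exact, and as right adjoint to the exact forgetful functor it sends $\Z$-injectives to $R_{\C}(G)$-injectives. Applying it to an injective resolution $\Z\to I^{\bullet}$ and composing with $\Phi$ produces an injective resolution of $R_{\C}(G)$ as a module over itself; computing $\op{Ext}^{i}_{R_{\C}(G)}(M,R_{\C}(G))$ against this resolution gives $H^{i}\op{Hom}_{\Z}(M,I^{\bullet})=\op{Ext}^{i}_{\Z}(M,\Z)$, as desired.

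The main obstacle is the verification of the input $\tau(\rho\sigma)=\delta_{\rho,\sigma^{\ast}}$ on irreducibles and the $R_{\C}(G)$-linearity of $\Phi$; the former is a standard character-theoretic identity and the latter follows from commutativity, so once these routine checks are in place the remainder of the argument is purely formal homological algebra.
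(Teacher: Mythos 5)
Your proof is correct. The paper does not reprove this lemma---it is imported from \cite{DavisLuckKTheory}---and the Frobenius-algebra argument you give (unimodularity of the pairing $(a,b)\mapsto\dim_\C(a\otimes b)^G$ because duality permutes the irreducibles, the resulting $R_\C(G)$-linear self-duality $R_\C(G)\cong\Hom_\Z(R_\C(G),\Z)$, and then the change-of-rings adjunction applied to an injective resolution of $\Z$) is the standard argument and is the one used in that reference.
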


\begin{theorem}[Universal Coefficients Theorem]\label{thm: UCT K-theory}
For any CW-complex and all $m\in\Z$, there is an short exact sequence
\[
0\rightarrow\op{Ext}^1_{\Z}(K_{m-1}(X),\Z)\rightarrow K^m(X)\rightarrow\Hom_{\Z}(K_m(X),\Z)\rightarrow0.
\]
Furthermore, when $X$ is finite, there is a an exact sequence
\[
0\rightarrow\op{Ext}^1_{\Z}(K^{m+1}(X),\Z)\rightarrow K_m(X)\rightarrow\Hom_{\Z}(K^m(X),\Z)\rightarrow0.
\]
These sequences are natural in $X$.
\end{theorem}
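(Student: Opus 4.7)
The plan is to deduce both exact sequences from Anderson duality applied to complex $K$-theory. For any spectrum $E$ and CW complex $X$, there is a natural short exact sequence
\[
0 \rightarrow \Ext^1_\Z(E_{m-1}(X),\Z) \rightarrow (I_\Z E)^m(X) \rightarrow \Hom_\Z(E_m(X),\Z) \rightarrow 0,
\]
where $I_\Z E$ denotes the Anderson dual of $E$. The key classical input is that the complex $K$-theory spectrum is Anderson self-dual up to a degree $2$ shift: $I_\Z KU \simeq \Sigma^2 KU$. Combining this with Bott periodicity $\Sigma^2 KU \simeq KU$ identifies $(I_\Z KU)^m(X)$ with $K^m(X)$, so the general Anderson sequence becomes the first claimed short exact sequence.

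For the second sequence, restrict to a finite CW complex $X$ and invoke Spanier--Whitehead duality: there is a finite spectrum $DX$ together with natural isomorphisms $K^m(X) \cong K_{-m}(DX)$ and $K_m(X) \cong K^{-m}(DX)$. Applying the first sequence to $DX$ in degree $-m$ and translating across these identifications yields the second sequence. Naturality in $X$ is inherited from the naturality of both dualities, and the finiteness hypothesis on $X$ enters only to guarantee that the Spanier--Whitehead dual $DX$ exists.

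The principal obstacle is establishing the Anderson self-duality $I_\Z KU \simeq \Sigma^2 KU$. One approach is to construct the relevant pairing $KU \wedge KU \rightarrow \Sigma^2 I_\Z$ from the Bott class together with the unit map $\Z \rightarrow \pi_2(KU)$, and then verify on homotopy groups that the resulting map $KU \rightarrow \Sigma^{-2} I_\Z KU$ is an equivalence; this is classical, going back to Anderson's unpublished notes and recorded later by Yosimura. As a fallback, one could prove both sequences for finite $X$ by induction on cells via the five lemma and cofiber sequences, then extend to arbitrary CW complexes through a Milnor-type $\varprojlim^1$ argument whose vanishing is controlled by the Atiyah--Hirzebruch spectral sequence; sanity checks on $X = S^n$ and on Moore spaces $M(\Z/p^k,n)$ confirm the sequence has the expected form.
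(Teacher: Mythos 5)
Your argument is correct, but note that the paper does not prove this statement at all: it is imported verbatim from Davis--L\"uck, who in turn attribute it to the classical work of Anderson and Yosimura. Your route via Anderson self-duality of $KU$ (the shift by $2$ being irrelevant thanks to Bott periodicity) together with Spanier--Whitehead duality for the finite case is essentially that classical proof, and the indexing in both sequences checks out: applying the first sequence to $DX$ in degree $-m$ and using $K_n(DX)\cong K^{-n}(X)$, $K^n(DX)\cong K_{-n}(X)$ gives exactly the second sequence. One point worth making explicit is that to run the Spanier--Whitehead step you need the first sequence for arbitrary \emph{spectra}, not just spaces, since $DX$ is a spectrum; the Anderson duality formulation delivers precisely that. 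A second point: for infinite CW complexes the statement only holds for \emph{represented} $K$-theory, which is what the paper uses (e.g.\ the $\hat{\Z}_p$ summands in $K^m(B\Gamma)$), and your main argument handles this correctly because the duality sequence is established at the spectrum level. Your fallback sketch is the weaker part --- the $\varprojlim^1$ term in the Milnor sequence does not vanish in the situations relevant to this paper, so "controlled by the Atiyah--Hirzebruch spectral sequence" would need real work --- but since the Anderson duality argument already covers all CW complexes, nothing depends on it.
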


\begin{theorem}[Equivariant Universal Coefficients Theorem]\label{thm: UCT equiv K-theory}
Suppose $H$ is a finite group and that $X$ is an $H$-CW-complex.
For $m\in\Z$, there is a short exact sequence of $R_{\C}(H)$-modules
\[
0\rightarrow\op{Ext}^1_{R_{\C}(H)}(K_{m-1}^H(X),R_{\C}(H))\rightarrow K_H^m(X)\rightarrow \Hom_{R_{\C}(H)}(K_m^H(X),R_{\C}(H))\rightarrow 0.
\]
Furthermore, when $X$ is finite, there is an exact sequence
\[
0\rightarrow\op{Ext}^1_{R_{\C}(H)}(K_H^{m+1}(X),R_{\C}(H))\rightarrow K_m^H(X)\rightarrow \Hom_{R_{\C}(H)}(K^m_H(X),R_{\C}(H))\rightarrow0.
\]
These sequences are natural in $X$.
\end{theorem}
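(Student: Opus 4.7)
The plan is to adapt the classical proof of the universal coefficient theorem for generalized (co)homology (in the style of Adams or Anderson) to the equivariant setting. The crucial algebraic input is \reflem{lem: Ext of rep ring}: since $\Ext^i_{R_\C(H)}(-,R_\C(H))$ agrees with $\Ext^i_\Z(-,\Z)$, it vanishes for $i\ge 2$. Thus for the purpose of mapping into $R_\C(H)$, every $R_\C(H)$-module behaves as if it had projective dimension at most one, which is exactly the condition needed to collapse the associated Atiyah--Hirzebruch-type spectral sequence into a two-term exact sequence.

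First I would handle the case where $X$ is a finite $H$-CW complex. Choose a finite set of generators for $K_*^H(X)$ as an $R_\C(H)$-module. Each generator can be realized as the image of a class in $K_*^H\bigl(\Sigma^{n_\alpha}(H/K_\alpha)_+\bigr)$ under a map to $X$. Wedging these maps together yields $f\colon Y\to X$ with $Y$ a wedge of equivariant suspensions of orbit spectra, so that $f_*\colon K_*^H(Y)\twoheadrightarrow K_*^H(X)$ is surjective and $K_*^H(Y)$ is free over $R_\C(H)$. The Puppe sequence of the cofiber $C$ then gives a short exact sequence
\[
0\to K_{*+1}^H(C)\to K_*^H(Y)\xrightarrow{f_*} K_*^H(X)\to 0.
\]
By the vanishing of higher $\Ext$, the kernel term $K_{*+1}^H(C)$ is automatically projective in the sense required, so this is a geometrically realized length-one free resolution of $K_*^H(X)$.

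Now apply $K_H^*$ to the cofiber sequence $Y\to X\to C$. For wedges of $(H/K)_+$-based spheres, the equivariant universal coefficient statement is trivially true because $K^*_H$ of an orbit is $R_\C(H)$ itself; so $K_H^m(Y)=\Hom_{R_\C(H)}(K^H_m(Y),R_\C(H))$ and similarly for $C$. The long exact sequence in $K_H^*$ associated to $Y\to X\to C$ then combines with the geometric resolution and a standard diagram chase (using the snake lemma and the identification of $\Ext^1$ as the cokernel of $\Hom(K_*^H(Y),R_\C(H))\to\Hom(K_{*+1}^H(C),R_\C(H))$) to yield the claimed short exact sequence for $X$. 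The second sequence, in the finite case, is obtained by the same argument with the roles of $K^*_H$ and $K_*^H$ exchanged, using Spanier--Whitehead-type duality for finite $H$-CW complexes. For the general $X$ in the first sequence, I would filter $X$ by its finite $H$-subcomplexes and pass to the inverse limit, using a Milnor $\varprojlim^1$ sequence together with the fact that the inverse system $\{K_H^*(X_n)\}$ is Mittag--Leffler after reducing mod the resolutions.

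The main obstacle is confirming that the geometric resolution genuinely stops after one step rather than spawning an infinite tower; this is where \reflem{lem: Ext of rep ring} is doing all the work. A secondary technical point is verifying naturality in $X$: since the resolution $Y\to X$ involves a choice of generators, naturality is established either by showing that different choices induce the same UCT sequence up to canonical isomorphism, or by upgrading the construction to a functorial CW-approximation argument in the equivariant stable category. Once naturality in the finite case is in hand, the passage to non-finite $X$ via $\varprojlim$ is routine.
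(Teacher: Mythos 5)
The paper does not prove this statement; it is imported wholesale from \cite{DavisLuckKTheory}, so there is no in-paper argument to compare against. Evaluating your proposal on its own terms, the overall strategy (a UCT in the style of Anderson/Adams, with \reflem{lem: Ext of rep ring} playing the role of ``global dimension one'') is the right idea, but there are two concrete gaps.

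First, the freeness claim is false. You assert that for $Y=\bigvee_\alpha \Sigma^{n_\alpha}(H/K_\alpha)_+$ the module $K_*^H(Y)$ is free over $R_\C(H)$. But $K_*^H((H/K)_+)\cong R_\C(K)$ with $R_\C(H)$ acting through the restriction homomorphism $R_\C(H)\to R_\C(K)$, and this is not a free $R_\C(H)$-module unless $K=H$. (Already for $H=\Z/p$, $K=1$: $R_\C(K)=\Z$ is a rank-one quotient of $\Z[t]/(t^p-1)$, not free.) The fix is to build the resolution out of free $K_H$-module spectra $\bigvee K_H\wedge S^{n_\alpha}$ rather than out of orbits, which also forces you to phrase the surjection as a map of $K_H$-modules into $K_H\wedge X_+$; as written, ``image of a class \dots under a map to $X$'' confuses unstable maps to the space $X$ with stable maps to the $K_H$-module $K_H\wedge X_+$, and the latter is what is actually available.

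Second, even after that repair, the step ``the kernel term is automatically projective in the sense required'' does not hold. Since $R_\C(H)$ does not have global dimension one, the kernel $\pi_*F_1\subset\pi_*F_0$ of a surjection from a free module need not be projective. What \reflem{lem: Ext of rep ring} and the long exact $\Ext$-sequence actually give you is that $\Ext^1_{R_\C(H)}(\pi_*F_1,R_\C(H))=0$, but this does not yet identify $[F_1,K_H]$ with $\Hom_{R_\C(H)}(\pi_*F_1,R_\C(H))$ --- that identification is exactly the UCT for $F_1$, and attempting to prove it by the same method produces an infinite regress rather than a two-step resolution. To close the argument one needs to run the full Adams/Anderson tower, obtain the UCT spectral sequence $\Ext^s_{R_\C(H)}(K^H_t(X),R_\C(H))\Rightarrow K_H^*(X)$, invoke \reflem{lem: Ext of rep ring} to kill the $s\ge 2$ columns so the $E_2$ page is concentrated in a strip, and then address convergence honestly (for finite $X$ via finite generation; for general $X$ via $\varprojlim$ and $\varprojlim^1$ as you indicate). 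Your proposal gestures at the spectral sequence collapse but then substitutes an unjustified projectivity claim where the convergence argument is actually needed.
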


\begin{cor}\label{cor: K-homology differentials vanish}
The differentials in the Atiyah-Hirzebruch-Serre spectral sequence
\[
E^2_{i,j}=H_i\left(\Z/p;K_j\left(T^n_{\rho}\right)\right)\Rightarrow K_{i+j}(B\Gamma)
\]
vanish.
\end{cor}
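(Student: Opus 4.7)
The plan is to mirror the proof of Theorem \ref{thm: K(BGamma  )}(\ref{part: diffs vanish}) in the homological setting. First I would reduce to the case $c=0$ by the same argument as Lemma \ref{lem: reduction of diffs vanishing to (a,b,0)}: the decomposition $B\Gamma\cong B\Gamma'\times S^1$ gives a morphism of homological exact couples, and the terms of the $B\Gamma$ couple decompose as direct sums of two shifted copies of the $B\Gamma'$ couple, so differentials vanishing for $B\Gamma'$ implies vanishing for $B\Gamma$. The base case $b=c=0$ is the homological analog of \cite[Lemma 3.3]{DavisLuckKTheory}.

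For the inductive step on $b$, I would first identify the $E^2$-terms on the homological side with those on the cohomological side. Since $T^n_\rho$ is a finite CW-complex whose $K$-cohomology is torsion-free (by Lemma \ref{lem: K(T) as Z/p mod}, $K^m(T^n_\rho)\cong\bigoplus_\ell\Lambda^{m+2\ell}L^*$), Theorem \ref{thm: UCT K-theory} yields $K_j(T^n_\rho)\cong K^j(T^n_\rho)^*$ as $\Z[\Z/p]$-modules. Combined with Lemma \ref{lem: Tate duality}, this translates the relevant group homology computations into group cohomology computations with a degree shift.

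Now I would run the same inductive argument as in Theorem \ref{thm: K(BGamma  )}(\ref{part: diffs vanish}). The retractions $\phi_d:\Gamma_d\to\Gamma$ and $\psi_d:\Gamma\to\Gamma_d$ split off the summands of $E^2_{i,j}$ indexed by those $\mathbf{r}\in\mc{R}_{b,m}$ with some $r_d=0$; their differentials vanish by the inductive hypothesis (via compatibility with the morphism of spectral sequences induced by $\psi_{d,*}$ and $\phi_{d,*}$). For the remaining summand $\mathbf{r}=(p,\ldots,p)$, each potential differential is a map between $H_*(\Z/p;\Lambda^{even}M_L)$ and $H_*(\Z/p;\Lambda^{odd}M_L)$ in bidegrees where, by Proposition \ref{prop: Davis-Luck group cohom} together with the Tate duality translation, at least one of source or target must vanish. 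For the edge terms at $i=0$, I would apply the homological half of Proposition \ref{prop: norms and E_infty}: a class on which the norm is nonzero represents a nonzero class in $E^\infty_{0,j}$, forcing the outgoing differentials $d^r_{r,j-r+1}$ to annihilate it.

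The main obstacle will be carefully tracking the index and parity shifts introduced by Tate duality so that the parity-based vanishing from Proposition \ref{prop: Davis-Luck group cohom} actually kills the correct source or target in each differential; in particular the matching for $i=0$ is subtler than the $i\ge 1$ case, and requires the norm/transfer argument rather than a direct duality identification. Once these bookkeeping details are settled, the structural argument of part (\ref{part: diffs vanish}) transfers essentially verbatim.
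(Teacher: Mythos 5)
Your proposal matches the paper's proof: the paper likewise reduces to type $(a,b,0)$, identifies $K_j\left(T^n_{\rho}\right)$ with the dual of $K^j\left(T^n_{\rho}\right)$ so that dualization and Lemma \ref{lem: Tate duality} transport the cohomological induction, handles the $\mathbf{r}\neq(p,\dots,p)$ summands by the retractions and the $\mathbf{r}=(p,\dots,p)$ summand by the parity vanishing of Proposition \ref{prop: Davis-Luck group cohom}, and treats the differentials landing in the $i=0$ column via the norm map and the vanishing of $\hat{H}^{-1}$ on the relevant summand. The approach and all key ingredients are the same.
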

\begin{proof}
The proof of this result is similar to the proof of Theorem \ref{thm: K(BGamma  )} so we only sketch it.
First, note that we may reduce to the case that $\Gamma$ is type $(a,b,0)$ as in Lemma \ref{lem: reduction of diffs vanishing to (a,b,0)}.

As a $\Z[\Z/p]$-module, $K_j\left(T^n_{\rho}\right)$ is isomorphic to the dual $K^j(T^n_{\rho})^*$.
Since dualization commutes with taking direct sums and dualization sends modules of type $(a,b,c)$ to modules of type $(a,b,c)$, the induction argument in the proof of Theorem \ref{thm: K(BGamma  )} proves that the differentials $d^2_{i,j}$ vanish when $i>2$.

Now, we check that the differentials 
\[
d^2_{2,j}:H_2\left(\Z/p;K_m\left(T_{\rho}^n\right)\right)\rightarrow H_0\left(\Z/p;K_m\left(T_{\rho}^n\right)\right)
\]
mapping to the left column vanish.
By the induction hypothesis, it suffices to show that the restriction of the differential summands of the form
\[
H_2\left(\Z/p;\left(\Lambda^{odd}M_L^*\otimes A^{(p,\cdots,p)}\right)^*\right)\rightarrow H_0\left(\Z/p;\left(\Lambda^{even}M_L^*\otimes A^{(p,\cdots,p)}\right)^*\right)
\]
and
\[
H_2\left(\Z/p;\left(\Lambda^{even}M_L^*\otimes A^{(p,\cdots,p)}\right)^*\right)\rightarrow H_0\left(\Z/p;\left(\Lambda^{odd}M_L^*\otimes A^{(p,\cdots,p)}\right)^*\right)
\]
vanish.
Since
\[
H_2\left(\Z/p;\left(\Lambda^{even}M_L^*\otimes A^{(p,\cdots,p)}\right)^*\right)\cong H^1\left(\Z/p;\Lambda^{even}M_L^*\otimes A^{(p,\cdots,p)}\right)\cong0,
\]
we only need to check the differentials vanish in the first case.
The left column consists of terms $K_j\left(T_{\rho}^n\right)_{\Z/p}$.
In order to show the differentials vanish, it suffices to show that the transgression $K_j\left(T_{\rho}^n\right)_{\Z/p}\rightarrow K_j(B\Gamma)$ is injective.
The norm map $K_j\left(T_{\rho}^n\right)_{\Z/p}\rightarrow K_j\left(T_{\rho}^n\right)^{\Z/p}$ factors through the transgression.
Since
\[
\hat{H}^{-1}\left(\Z/p;\left(\Lambda^{even}M_L^*\otimes A^{(p,\cdots,p)}\right)^*\right)\cong\hat{H}^1\left(\Z/p;\Lambda^{even}M_L^*\otimes A^{(p,\cdots,p)}\right)\cong0
\]
the norm map is injective on the summand $H_0\left(\Z/p;\left(\Lambda^{even}M_L^*\otimes A^{(p,\cdots,p)}\right)^*\right)$.
Hence, this term is in $E^{\infty}_{0,j}$.

The proof that $d^r_{i,j}$ vanishes for $r>2$ is similar.
\end{proof}

\begin{cor}\label{cor: K-homology of BGamma  }
There is an isomorphism
\[
K_m(B\Gamma )\cong \Hom_{\Z}(K^m(T_\rho^n)^{\Z/p},\Z)\oplus\left(\Z/p^\infty\right)^{(p-1)p^a2^{b+c-1}}
\]
where $\Hom_{\Z}\left(K^m\left(T_{\rho}^n\right)^{\Z/p},\Z\right)$ is the image of the map induced by the inclusion of the fiber $T_{\rho}^n\rightarrow B\Gamma $.
\end{cor}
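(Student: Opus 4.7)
The plan is to combine the Atiyah-Hirzebruch-Serre spectral sequence for $K$-homology (whose differentials vanish by Corollary~\ref{cor: K-homology differentials vanish}) with the Universal Coefficients Theorem (Theorem~\ref{thm: UCT K-theory}) and the computation of $K^m(B\Gamma)$ in Theorem~\ref{thm: K(BGamma  )}.

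We first use the collapsed $K$-homology spectral sequence to produce an exhaustive increasing filtration on $K_m(B\Gamma)$ whose lowest piece $F_0$ equals the image of the fiber inclusion $K_m(T^n_\rho) \to K_m(B\Gamma)$ and is isomorphic to the coinvariants $K_m(T^n_\rho)_{\Z/p}$, and whose higher slices $H_s(\Z/p; K_{m-s}(T^n_\rho))$ for $s \geq 1$ are all $p$-primary torsion. Applying the UCT (the second sequence of Theorem~\ref{thm: UCT K-theory}) to the finite complex $T^n_\rho$ identifies $K_m(T^n_\rho)$ with $\Hom_\Z(K^m(T^n_\rho), \Z)$; Tate duality (Lemma~\ref{lem: Tate duality}) then shows that the norm map $K_m(T^n_\rho)_{\Z/p} \to K_m(T^n_\rho)^{\Z/p}$ has finite $p$-primary kernel and cokernel, so the free part of $F_0$ is canonically identified with $\Hom_\Z(K^m(T^n_\rho)^{\Z/p}, \Z)$.

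To pin down the torsion, we apply the first sequence of the UCT, valid for any CW complex, to $X = B\Gamma$:
\[
0 \to \Ext^1_\Z(K_{m-1}(B\Gamma), \Z) \to K^m(B\Gamma) \to \Hom_\Z(K_m(B\Gamma), \Z) \to 0.
\]
Theorem~\ref{thm: K(BGamma  )} gives $K^m(B\Gamma) \cong K^m(T^n_\rho)^{\Z/p} \oplus \hat{\Z}_p^N$ with $N = (p-1)p^a 2^{b+c-1}$. Since $\Hom_\Z(\hat{\Z}_p, \Z) = 0$ and $K^m(B\Gamma)$ contains no finite-order torsion, the $\hat{\Z}_p^N$ summand must arise as $\Ext^1_\Z$ of a divisible $p$-primary torsion group; the identity $\Ext^1_\Z(\Z/p^\infty, \Z) = \hat{\Z}_p$ forces the torsion of $K_{m-1}(B\Gamma)$ to be exactly $N$ copies of $\Z/p^\infty$. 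Bott periodicity transfers this to $K_m(B\Gamma)$, and the resulting extension $0 \to \Hom_\Z(K^m(T^n_\rho)^{\Z/p}, \Z) \to K_m(B\Gamma) \to (\Z/p^\infty)^N \to 0$ splits because the quotient is divisible and hence injective as an abelian group.

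The most delicate point will be reconciling the two sources of $p$-torsion in the filtration: the finite kernel of the norm map sitting inside $F_0 = K_m(T^n_\rho)_{\Z/p}$, and the finite groups $H_s(\Z/p; K_{m-s}(T^n_\rho))$ appearing as successive quotients for $s \geq 1$. These must collectively assemble into the $N$ divisible Pr\"ufer summands rather than persisting as finite cyclic pieces; this is the $K$-homological analogue of the Mittag-Leffler accumulation that produced the $\hat{\Z}_p$ summands in the cohomology computation, and is ultimately controlled by matching ranks through the UCT.
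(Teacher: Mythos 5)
Your outline assembles the right ingredients (the computation of $K^m(B\Gamma)$, the universal coefficients theorem, the collapsed homology spectral sequence), but the step that actually produces the $(\Z/p^\infty)^{N}$ summand is missing, and the inference you substitute for it is not valid. Knowing that $\Ext^1_{\Z}(K_{m-1}(B\Gamma),\Z)$ contains (or equals) $\hat{\Z}_p^N$ does not ``force'' the torsion subgroup of $K_{m-1}(B\Gamma)$ to be $(\Z/p^\infty)^N$: $\Ext^1_{\Z}(-,\Z)$ does not determine a group, and the torsion-free quotient of $K_{m-1}(B\Gamma)$ (which is only a countable colimit of finitely generated groups, not a priori free) can itself contribute enormously to $\Ext^1_{\Z}(-,\Z)$. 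You would first have to show that the Ext contribution comes entirely from the torsion subgroup and that this torsion subgroup is divisible --- which is essentially the statement being proved. You then flag the ``most delicate point'' (how the finite $p$-groups in the filtration slices accumulate into divisible Pr\"ufer groups) and leave it unresolved; that is precisely where the content of the proof lies. The paper handles it by filtering $B\Gamma$ by the finite complexes $B^s=T^n_\rho\times_{\Z/p}E\Z/p^{(s)}$, applying the finite-complex UCT sequence to each $B^s$, passing to the colimit, identifying $K^{m+1}(B^s)\cong K^{m+1}(T^n_\rho)^{\Z/p}\oplus A^s\oplus C^s$ with $A^s$ a finite $p$-group whose inverse limit is $F^1\cong\hat{\Z}_p^N$ and $C^s$ free and dying in the limit, and then computing
\[
\varinjlim\Ext^1_{\Z}(A^s,\Z)\cong\varinjlim\widehat{A^s}\cong\widehat{\varprojlim A^s}\cong\widehat{\hat{\Z}_p^N}\cong(\Z/p^\infty)^N
\]
by Pontryagin duality. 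Some version of this colimit-plus-duality argument is indispensable.

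Two further corrections. First, in the UCT extension the divisible torsion appears as the \emph{subgroup} (it is the Ext term), not the quotient; the sequence is $0\to(\Z/p^\infty)^N\to K_m(B\Gamma)\to\Hom_{\Z}(K^m(T^n_\rho)^{\Z/p},\Z)\to0$, and it splits because the subgroup is divisible, hence injective (or because the quotient is free). Your version, with the divisible group as quotient, does not split for the reason you give: $0\to\Z\to\Z[1/p]\to\Z/p^\infty\to0$ has divisible quotient and does not split. Second, your identification of the free part via the norm map and Tate duality is a reasonable alternative to the paper's naturality argument, but to conclude that this free part is exactly the image of $K_m(T^n_\rho)\to K_m(B\Gamma)$ you still need the naturality of the UCT with respect to the fiber inclusion, which the paper checks explicitly with a commuting square.
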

\begin{proof}
Define $B^s:=T_\rho^n\times_{\Z/p}E\Z/p^{(s)}$.
We have the following direct system of short exact sequences.
\[
\begin{tikzpicture}[scale=1.75]
\node (A) at (0,3) {$\vdots$};\node (B) at (2,3) {$\vdots$};\node (C) at (4,3) {$\vdots$};
\node (D) at (0,2) {$\op{Ext}^1_{\Z}(K^{m+1}(B^s),\Z)$};\node (E) at (2,2) {$K_m(B^s)$};\node (F) at (4,2) {$\Hom_{\Z}(K^m(B^s),\Z)$};
\node (G) at (0,1) {$\op{Ext}^1_{\Z}(K^{m+1}(B^{s+1}),\Z)$};\node (H) at (2,1) {$K_m(B^{s+1})$};\node (I) at (4,1) {$\Hom_{\Z}(K^m(B^{s+1}),\Z)$};
\node (J) at (0,0) {$\vdots$}; \node (K) at (2,0) {$\vdots$};\node (L) at (4,0) {$\vdots$};
\path[->] (A) edge (D) (B) edge (E) (C) edge (F) (D) edge (E) (E) edge (F) (D) edge (G) (E) edge (H) (F) edge (I) (G) edge (H) (H) edge (I) (G) edge (J) (H) edge (K) (I) edge (L);
\end{tikzpicture}
\]
Taking the colimit, we obtain
\[
0\rightarrow\varinjlim\op{Ext}^1_{\Z}(K^{m+1}(B^s),\Z)\rightarrow K_m(B\Gamma )\rightarrow\varinjlim\Hom_{\Z}(K^m(B^s),\Z)\rightarrow0.
\]
By considering the Atiyah-Hirzebruch-Serre spectral sequence for the fibration $B^s\rightarrow B\Z/p^{(s)}$ and comparing it to that of the fibration $B\Gamma \rightarrow B\Z/p$, we see that $K^m(B^s)\cong K^m\left(T_{\rho}^n\right)^{\Z/p}\oplus A^s\oplus C^s$ where $A^s$ is some $p$-group and $C^s$ is a (possibly trivial) finitely generated free abelian group.
Indeed, the limit of the $A^s$ is exactly $F^1$ in the filtration of $K^m(B\Gamma )$.
Moreover, by considering morphisms of spectral sequences, $C^s$ is not in the image of $K^m(B^{s+1})$.
Therefore, the right hand term is isomorphic to $\Hom_{\Z}\left(K^m\left(T_{\rho}^n\right)^{\Z/p},\Z\right)\cong K^m\left(T_{\rho}^n\right)^{\Z/p}$.

The left hand term is isomorphic to $\varinjlim\op{Ext}^1_{\Z}(A^s,\Z)$ (we abuse notation here and let $A^s$ denote the $p$-group in $K^{m+1}(B^s)$).
We obtain isomorphisms
\[
\varinjlim\op{Ext}^1_{\Z}(A^s;\Z)\cong\varinjlim\widehat{A^s}\cong\widehat{\varprojlim A^s}\cong\widehat{\left(\hat{\Z}_p^{(p-1)p^a2^{b+c-1}}\right)}\cong\left(\Z/p^{\infty}\right)^{(p-1)p^a2^{b+c-1}}
\]
where $\widehat{A}$ denotes the Pontryagin dual of a locally compact abelian group $A$.
We refer to the proof of \cite[Theorem 4.1]{DavisLuckKTheory} and \cite{Vick} for details regarding Pontryagin duality.

It remains to check that the subgroup $\Hom_{\Z}\left(K^m\left(T_{\rho}^n\right)^{\Z/p},\Z\right)$ is the image of the map induced by $T_\rho^n\rightarrow B\Gamma $.
The inclusion induces the composition
\[
K^m(B^s)\rightarrow K^m\left(T_{\rho}^n\right)^{\Z/p}\inj K^m\left(T_{\rho}^n\right).
\]
The result follows by the commutativity of the diagram
\[
\begin{tikzpicture}[scale=2]
\node (A) at (0,1) {$K_m\left(T_{\rho}^n\right)$};\node (B) at (2,1) {$\Hom_{\Z}\left(K^m\left(T_{\rho}^n\right),\Z\right)$};
\node (C) at (0,0) {$K_m(B\Gamma )$};\node (D) at (2,0) {$\varinjlim\Hom_{\Z}(K^m(B^s),\Z)$};
\path[->] (A) edge (B) (A) edge (C) (C) edge (D) (B) edge (D);
\end{tikzpicture}.
\]
\end{proof}

In the future, we will write $K^m\left(T_{\rho}^n\right)^{\Z/p}$ rather than $\Hom_{\Z}\left(K^m\left(T_{\rho}^n\right)^{\Z/p},\Z\right)$.

\begin{cor}\label{cor: KO-homology of BGamma  }
After inverting $2$, $KO_m(B\Gamma )$ is the sum of a finitely generated free $\Z\left[\frac{1}{2}\right]$-module and a $p$-torsion group.
Moreover, the inclusion $T_{\rho}^n\rightarrow B\Gamma $ induces a surjection on the finitely generated free $\Z\left[\frac{1}{2}\right]$-module.
\end{cor}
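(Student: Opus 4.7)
The plan is to leverage the complex $K$-theory computation of Corollary \ref{cor: K-homology of BGamma  } via the classical splitting of $KO[\frac{1}{2}]$ off of $K[\frac{1}{2}]$. Complexification $c:KO\to K$ and realification $r:K\to KO$ satisfy $r\circ c=2$, so after inverting $2$ the natural transformation $c$ admits the left inverse $s:=\frac{1}{2}r$. Consequently, for every space $X$, the map $c:KO_m(X)[\frac{1}{2}]\hookrightarrow K_m(X)[\frac{1}{2}]$ is a natural split injection, and structural information can be transferred from $K$ to $KO$ by retracting.

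Specializing to $X=B\Gamma$ and invoking Corollary \ref{cor: K-homology of BGamma  }, I view $KO_m(B\Gamma)[\frac{1}{2}]$ as a $\Z[\frac{1}{2}]$-submodule of $K^m(T_\rho^n)^{\Z/p}[\frac{1}{2}]\oplus(\Z/p^\infty)^{(p-1)p^a 2^{b+c-1}}$. The torsion subgroup of $KO_m(B\Gamma)[\frac{1}{2}]$ injects into the right-hand $p$-torsion summand, so it is itself $p$-torsion. The torsion-free quotient injects into the finitely generated free $\Z[\frac{1}{2}]$-module $K^m(T_\rho^n)^{\Z/p}[\frac{1}{2}]$; as $\Z[\frac{1}{2}]$ is a PID, this quotient is itself finitely generated and free. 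The short exact sequence between the torsion subgroup and the torsion-free quotient then splits because the latter is projective, producing the claimed decomposition $F\oplus T$.

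For the surjection claim, the key input is naturality of the retract $(c,s)$. Given $y$ in the free part $F\subseteq KO_m(B\Gamma)[\frac{1}{2}]$, the free component of $c(y)\in K_m(B\Gamma)[\frac{1}{2}]$ lies in $K^m(T_\rho^n)^{\Z/p}[\frac{1}{2}]$, which by Corollary \ref{cor: K-homology of BGamma  } is exactly the image of $i_*:K_m(T_\rho^n)[\frac{1}{2}]\to K_m(B\Gamma)[\frac{1}{2}]$. I pick a lift $\tilde z\in K_m(T_\rho^n)[\frac{1}{2}]$ of this free component and observe that $i_*(s(\tilde z))=s(i_*(\tilde z))$ agrees with $y=s(c(y))$ modulo the torsion of $KO_m(B\Gamma)[\frac{1}{2}]$. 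Hence the composition $KO_m(T_\rho^n)[\frac{1}{2}]\to KO_m(B\Gamma)[\frac{1}{2}]\twoheadrightarrow F$ is surjective. The only real subtlety throughout is tracking the interaction between the retraction $(c,s)$ and the torsion/free decomposition, but injectivity of $c$ makes this routine; no new spectral sequence arguments are required.
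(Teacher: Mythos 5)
Your proposal is correct and follows essentially the same route as the paper: use $r\circ c=2$ to split $KO_m(B\Gamma)\left[\frac{1}{2}\right]$ off of $K_m(B\Gamma)\left[\frac{1}{2}\right]$, read off the decomposition from Corollary \ref{cor: K-homology of BGamma  }, and for surjectivity lift the free component through the fiber and apply $\frac{1}{2}r_*$ by naturality. Your handling of the possible torsion discrepancy in $c(y)$ is in fact slightly more careful than the paper's own wording, but it is the same argument.
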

\begin{proof}
Consider the following diagram.
\[
\begin{tikzpicture}[scale=1.5]
\node (A) at (0,1) {$KO_m\left(T_{\rho}^n\right)$};\node (B) at (2,1) {$K_m\left(T_{\rho}^n\right)$};\node (C) at (4,1) {$KO_m\left(T_{\rho}^n\right)$};
\node (D) at (0,0) {$KO_m(B\Gamma )$};\node (E) at (2,0) {$K_m(B\Gamma )$};\node (F) at (4,0) {$KO_m(B\Gamma )$};
\path[->] (A) edge node[above]{$i_*$} (B) (B) edge node[above]{$r_*$} (C) (A) edge (D) (B) edge (E) (C) edge (F) (D) edge node[above]{$i_*$} (E) (E) edge node[above]{$r_*$} (F);
\end{tikzpicture}
\]
The horizontal composites are multiplication by $2$.
Thus, after inverting $2$, $i_*$ is injective.
Applying Corollary \ref{cor: K-homology of BGamma  } proves the first part.

For the second part, let $x\in KO_m(B\Gamma )$ be an element in the finitely generated free $\Z\left[\frac{1}{2}\right]$-submodule of $KO_m(B\Gamma )$.
Then, $i_*x$ is in the image of the middle vertical map by Corollary \ref{cor: K-homology of BGamma }.
It pulls back to an element $y\in K_m\left(T_\rho^n\right)$.
But then $x$ is the image of $\frac{1}{2}r_*y$ under the outer vertical maps.
\end{proof}

\begin{cor}\label{cor: equiv K theory torsion free}
The groups $K^m_\Gamma (\underline{E}\Gamma )$, $KO^m_\Gamma (\underline{E}\Gamma )$ and $KO_m^\Gamma (\underline{E}\Gamma )$ are $p$-torsion free.
\end{cor}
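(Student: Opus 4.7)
The plan is to apply \refprop{prop: LES from pushout diagram} to each of the three equivariant (co)homology theories. In every case the goal is to show that the non-equivariant term $\mc{H}(\underline{B}\Gamma)$ and the ``singular-set'' term $\mc{C}^\bullet$ or $\mc{K}_\bullet$ are $p$-torsion free for all degrees simultaneously. Once this is established, the splitting after inverting $p$ asserted in the proposition forces every connecting map in the LES to have $p$-power-torsion image; since each such map lands in a $p$-torsion-free group, it must vanish. The LES then reduces to short exact sequences
\[
0 \to \mc{H}^m(\underline{B}\Gamma) \to \mc{H}^m_\Gamma(\underline{E}\Gamma) \to \mc{C}^m \to 0
\]
(and the homology analog) whose ends are $p$-torsion free, so the middle is $p$-torsion free by the standard fact that an extension of $p$-torsion-free abelian groups is $p$-torsion free.

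For $K^m_\Gamma(\underline{E}\Gamma)$ and $KO^m_\Gamma(\underline{E}\Gamma)$, the singular-set term is easy: since $\underline{E}N_\Gamma P \simeq \R^{b+c}$ carries a trivial $\Z/p$-action, each summand of $\mc{C}^m$ is the cokernel of $\mc{H}^m(T^{b+c}) \to R_\C(\Z/p)\otimes\mc{H}^m(T^{b+c})$, which is isomorphic to $\Z^{p-1}\otimes\mc{H}^m(T^{b+c})$ and has no $p$-torsion (recall that $p$ is odd, so $KO^*(T^{b+c})$ is $p$-torsion free). For the non-equivariant term I use that $\underline{B}\Gamma = \underline{E}\Gamma/\Gamma$ is the pushout of $B\Gamma \leftarrow \coprod_P BN_\Gamma P \to \coprod_P BW_\Gamma P$ obtained by quotienting \refdiag{diagram: pushout for EGamma  } by $\Gamma$. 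The Mayer--Vietoris LES for this pushout has inputs $\mc{H}^m(B\Gamma)$ ($p$-torsion free by \refthm{thm: K(BGamma  )}), $\mc{H}^m(BW_\Gamma P) = \mc{H}^m(T^{b+c})$, and $\mc{H}^m(BN_\Gamma P) \cong \mc{H}^m(T^{b+c})\otimes \mc{H}^*(B\Z/p)$; the last factor is $p$-torsion free by Atiyah's theorem for $K$ and its $KO$-analog at odd primes. Applying the template of the first paragraph to this MV LES then yields $\mc{H}^m(\underline{B}\Gamma)$ $p$-torsion free, completing the cohomology cases.

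The case of $KO^\Gamma_m(\underline{E}\Gamma)$ uses the homology form of \refprop{prop: LES from pushout diagram}; the singular-set term $\mc{K}_m$ is the kernel of the augmentation $R_\C(\Z/p)\otimes KO_m(T^{b+c}) \to KO_m(T^{b+c})$, isomorphic to $\Z^{p-1}\otimes KO_m(T^{b+c})$ and $p$-torsion free for $p$ odd. The main obstacle is showing $KO_m(\underline{B}\Gamma)$ is $p$-torsion free: both \refcor{cor: K-homology of BGamma  } and \refcor{cor: KO-homology of BGamma  } exhibit explicit $(\Z/p^\infty)^*$-torsion in $K_m(B\Gamma)$ and in $KO_m(B\Gamma)[1/2]$, and the Mayer--Vietoris for $\underline{B}\Gamma$ also involves $KO_m(BN_\Gamma P) = KO_m(T^{b+c}\times B\Z/p)$, which carries similar divisible $p$-torsion. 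A rank count shows the total $p$-torsion of $\bigoplus_P KO_m(BN_\Gamma P)$ exactly matches that of $KO_m(B\Gamma)$ (both of rank $(p-1)p^a 2^{b+c-1}$ over $\hat{\Z}_p$ on the Pontryagin dual side), and I expect a careful analysis of the restriction maps along $N_\Gamma P \to \Gamma$ identifies these $p$-torsion summands, so they cancel in the MV sequence. With $KO_m(\underline{B}\Gamma)$ $p$-torsion free in hand, the template from the first paragraph concludes.
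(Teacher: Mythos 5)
The central input of your plan --- that $\mc{H}^m(\underline{B}\Gamma)$ is $p$-torsion free --- is not only unproved but false for $\mc{H}=K$ in general, and it is exactly where the real difficulty of this corollary lives. In the Mayer--Vietoris sequence for the pushout presenting $\underline{B}\Gamma$, the restriction $K^m(B\Gamma)\oplus\bigoplus_{(P)}K^m(BW_\Gamma P)\to\bigoplus_{(P)}K^m(BN_\Gamma P)$ hits the completed summands $\hat{\Z}_p^{(p-1)2^{b+c-1}}$ only up to finite index, so the connecting homomorphism contributes a (typically nontrivial) finite $p$-group to $K^{m+1}(\underline{B}\Gamma)$; the paper's own proof records that $K^m(\underline{B}\Gamma)$ is a finitely generated free group plus a finite $p$-group. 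Your ``template'' cannot rule this out: to conclude that a boundary map vanishes because ``it lands in a $p$-torsion-free group'' you must already know the target is $p$-torsion free, which is what is being proved, and the splitting of \refprop{prop: LES from pushout diagram} applies to $\mc{H}^m(\underline{B}\Gamma)\to\mc{H}^m_\Gamma(\underline{E}\Gamma)$, not to the Mayer--Vietoris sequence computing $\underline{B}\Gamma$ itself. The paper instead accepts the torsion in $K^m(\underline{B}\Gamma)$ and compares the two exact sequences: a $p$-torsion class $x\in K^m_\Gamma(\underline{E}\Gamma)$ lifts (using torsion-freeness of $\mc{C}^m$ and of $K^m(B\Gamma)$ from \refthm{thm: K(BGamma  )}) into the torsion subgroup of $K^m(\underline{B}\Gamma)$, which is the image of the completed map $\phii$ from $\bigoplus_{(P)}\hat{\Z}_p^{2^{b+c-1}}$; since $\phii$ factors through a finite quotient onto which the uncompleted term $\mc{C}^{m-1}=\bigoplus_{(P)}\tilde{R}_{\C}(\Z/p)^{2^{b+c-1}}$ surjects, $x$ lies in the image of $\mc{C}^{m-1}\to K^m(\underline{B}\Gamma)\to K^m_\Gamma(\underline{E}\Gamma)$ and dies by exactness. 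That interplay between the completed and uncompleted sequences is the idea your proposal is missing.

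The homology case is also a genuine gap as written: ``I expect a careful analysis of the restriction maps identifies these $p$-torsion summands'' is not an argument, and matching ranks of Pontryagin duals does not show that the Mayer--Vietoris maps restrict to isomorphisms on the divisible $(\Z/p^\infty)$-summands, which is what a cancellation would require. The paper avoids $KO_m(\underline{B}\Gamma)$ entirely: once $K^m_\Gamma(\underline{E}\Gamma)$ is known to be $p$-torsion free, the equivariant universal coefficient theorem (\refthm{thm: UCT equiv K-theory} together with \reflem{lem: Ext of rep ring}) shows $K_m^\Gamma(\underline{E}\Gamma)$ is $p$-torsion free, and then $KO_m^\Gamma(\underline{E}\Gamma)$ follows because multiplication by $2$ --- injective on $p$-torsion for odd $p$ --- factors through $K_m^\Gamma(\underline{E}\Gamma)$. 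You should replace your third paragraph with this duality-plus-complexification argument.
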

\begin{proof}
First, we show that $K^m_\Gamma (\underline{E}\Gamma )$ is $p$-torsion free.
Let $\tilde{R}_{\C}(\Z/p)$ denote the reduced complex representation ring of the group $\Z/p$.
Proposition \ref{prop: LES from pushout diagram} gives us the top row of the following diagram.
\[
\begin{tikzpicture}[scale=1.5]
\node (A) at (0,1) {$\cdots$};\node (B) at (2,1) {$\bigoplus_{(P)\in\mc{P}}\tilde{R}_{\C}(\Z/p)^{2^{b+c-1}}$};\node (C) at (4,1) {$K^m(\underline{B}\Gamma )$};\node (D) at (6,1) {$K_\Gamma ^m(\underline{E}\Gamma )$};\node (E) at (8,1) {$\oplus_{(P)\in\mc{P}}\tilde{R}_{\C}(\Z/p)^{2^{b+c-1}}$};\node (F) at (10,1) {$\cdots$};
\node (G) at (0,0) {$\cdots$};\node (H) at (2,0) {$\bigoplus_{(P)\in\mc{P}}\hat{\Z}_p^{2^{b+c-1}}$};\node (I) at (4,0) {$K^m(\underline{B}\Gamma )$};\node (J) at (6,0) {$K^m(B\Gamma )$};\node (K) at (8,0) {$\bigoplus_{(P)\in\mc{P}}\hat{\Z}_p^{2^{b+c-1}}$};\node (L) at (10,0) {$\cdots$};
\path[->] (A) edge (B) (B) edge (C) (C) edge (D) (D) edge (E) (E) edge (F) (G) edge (H) (H) edge node[above]{$\phii$} (I) (I) edge (J) (J) edge (K) (K) edge (L) (B) edge (H) (C) edge node[left]{$=$} (I) (D) edge (J) (E) edge (K);
\end{tikzpicture}
\]
The bottom row comes from applying $K$-theory to the homotopy pushout diagram obtained from the quotient of diagram \ref{diagram: pushout for EGamma } by $\Gamma $.
The vertical map $\bigoplus_{(P)\in\mc{P}}\tilde{R}_{\C}(\Z/p)^{2^{b+c-1}}\rightarrow\bigoplus_{(P)\in\mc{P}}\hat{\Z}_p^{2^{b+c-1}}$ is $p$-adic completion.

Let $x\in K^m_\Gamma (\underline{E}\Gamma )$ be an element of order $p$.
Then, $x$ must pull back to an element in $K^m(\underline{B}\Gamma )$ which then pulls back to and element $\overline{x}\in\bigoplus_{(P)\in\mc{P}}\hat{\Z}_p^{2^{b+c-1}}$ (here we use that $K^m(B\Gamma )$ is torsion free).
Using the transfer, one can check that $K^m(\underline{B}\Gamma )$ is the sum of a finitely generated free abelian group with a finite $p$-group.
The image of $\phii$ must be contained in the $p$-group (it is the entire $p$-group as $K^m(B\Gamma )$ is torsion free).
Thus $\phii$ factors through $\oplus_{(P)\in\mc{P}}\hat{\Z}_p^{2^{b+c-1}}/p^N\hat{\Z}_p^{2^{b+c-1}}$ for some $N$.
But every element in this quotient can be represented by an element in the image of $\bigoplus_{(P)\in\mc{P}}\tilde{R}_{\C}(\Z/p)^{2^{b+c-1}}$.
Let $\tilde{x}\in\bigoplus_{(P)\in\mc{P}}\tilde{R}_{\C}(\Z/p)^{2^{b+c-1}}$ be an element lifting the projection of $\overline{x}$.
Then, we obtain that $\tilde{x}$ maps to $x\in K_\Gamma ^m(\underline{E}\Gamma )$ but exactness implies that $x=0$.
This shows that $K^m_\Gamma (\underline{E}\Gamma )$ has no $p$-torsion.

Lemma \ref{lem: Ext of rep ring} and Theorem \ref{thm: UCT equiv K-theory} imply that $K^\Gamma _m(\underline{E}\Gamma )$ has no $p$-torsion.
Since multiplication by $2$ in $KO^\Gamma _m(\underline{E}\Gamma )$ factors through $K^\Gamma _m(\underline{E}\Gamma )$, it follows that $KO^\Gamma _m(\underline{E}\Gamma )$ has no $p$-torsion.
\end{proof}

\section{$L$-Theory Computations}\label{section: L theory}
For geometric applications, one is typically interested in the groups $L^s_m(\Z[\Gamma])$ and $L^h_m(\Z[\Gamma])$.
The group $L^s_m(\Z[\Gamma])$ contains obstructions to obtaining simple homotopy equivalences through surgery and the group $L^h_m(\Z[\Gamma])$ contains obstructions to obtaining homotopy equivalences through surgery.
There is a map $L^s_m(\Z[\Gamma])\rightarrow L^h_m(\Z[\Gamma])$.
More generally, one can define the lower $L$-groups $L^{\langle j\rangle}_m(\Z[\Gamma])$ where $j=2,1,0,\cdots$ with the convention
\[
L^{\langle 2\rangle}_m(\Z[\Gamma])=L^s_m(\Z[\Gamma])\hspace{1cm}L^{\langle 1\rangle}_m(\Z[\Gamma])=L^h_m(\Z[\Gamma]).
\]
There are maps
\[
L^{\langle j\rangle}_m(\Z[\Gamma])\rightarrow L^{\langle j-1\rangle}_m(\Z[\Gamma])
\]
and we define $L^{\langle-\infty\rangle}_m(\Z[\Gamma]):=\varinjlim L^{\langle j\rangle}_m(\Z[\Gamma])$.
This theory is developed in \cite{RanickiLowerLTheory}.
The group $L^{\langle j\rangle}_m(\Z[\Gamma])$ is $\pi_m L^{\langle j\rangle}(\Z[\Gamma])$ for a spectrum $L^{\langle j\rangle}(\Z[\Gamma])$.
Also, the functors $\mathbf{L}^{\langle j\rangle}_{\Z}:Grpd\rightarrow Sp$ send a group $G$ (regarded as a groupoid) to $L^{\langle j\rangle}(\Z[G])$.

We begin by computing $L^{\langle-\infty\rangle}_m(\Z[\Gamma])$.
This is easier to work with as the Farrell-Jones Conjecture holds for $L^{\langle-\infty\rangle}$.
Using Rothenberg sequences \cite[Section 17]{RanickiLowerLTheory}, we then compute $L^{\langle j\rangle}_m(\Z[\Gamma])$ for all $j$.

One of the primary $L$-groups that appear in our computations are the groups $L^{\langle j\rangle}_m\left(\Z\left[N_\Gamma P\right]\right)$ and $L^{\langle j\rangle}_m\left(\Z\left[W_\Gamma P\right]\right)$ so we take some time to discuss these groups here.
Recall that $N_\Gamma P\cong\Z^{b+c}\times\Z/p$ and $W_\Gamma P\cong\Z^{b+c}$.
Shaneson splitting \cite[Theorem 17.2]{RanickiLowerLTheory} gives isomorphisms
\[
L^{\langle j\rangle}_m\left(\Z\left[N_\Gamma P\right]\right)\cong\bigoplus_{i=0}^{b+c}L^{\langle j-i\rangle}_{m-i}(\Z[\Z/p])^{\binom{b+c}{i}}\hspace{1cm}L^{\langle j\rangle}_m\left(\Z\left[W_\Gamma P\right]\right)\cong\bigoplus_{i=0}^{b+c}L^{\langle j-i\rangle}_{m-i}(\Z)^{\binom{b+c}{i}}.
\]
When $j=-\infty$, the Farrell-Jones Conjecture says that the first group can be repackaged as the homology group $H_m\left(T^b;L^{\langle-\infty\rangle}(\Z[\Z/p])\right)$.
The second group can always be rewritten as $H_m\left(T^{b+c};L(\Z)\right)$ since the maps $L^{\langle j\rangle}_m(\Z)\rightarrow L^{\langle j-1\rangle}_m(\Z)$ are isomorphisms for all $j$.
Since the map $N_\Gamma P\rightarrow W_\Gamma P$ splits in our case, we have an inclusion $L^{\langle j\rangle}_m\left(\Z\left[W_\Gamma P\right]\right)\rightarrow L^{\langle j\rangle}_m\left(\Z\left[N_\Gamma P\right]\right)$.
The quotient is
\[
L^{\langle j\rangle}_m\left(\Z\left[N_\Gamma P\right]\right)/L^{\langle j\rangle}_m\left(\Z\left[W_\Gamma P\right]\right)\cong\bigoplus_{i=0}^{b+c}\tilde{L}^{\langle j-i\rangle}_{m-i}(\Z[\Z/p])^{\binom{b+c}{i}}.
\]
So, these groups can be computed in terms of the $L$-groups of the group $\Z/p$.

Finally, we record the $L$-groups of $\Z/p$.
The following theorem can be found in \cite{BakEven} and \cite{BakOdd}.
\begin{theorem}\label{thm: L groups of Z/p}
There are isomorphisms
\begin{align*}
\tilde{L}^s_m(\Z[\Z/p])&\cong\begin{cases}\Z^{(p-1)/2}&m \text{ even}\\
0&m\text{ odd}
\end{cases}\\
\tilde{L}^h_m(\Z[\Z/p])&\cong\begin{cases}\Z^{(p-1)/2}\oplus H(\Z/p)&m \text{ even}\\
0&m\text{ odd}
\end{cases}
\end{align*}
where $H(\Z/p)$ is finitely generated abelian group of exponent $2$.
For $j\le0$, there is an isomorphism
\[
\tilde{L}^{\langle j\rangle}_m(\Z[\Z/p])\cong\begin{cases}\Z^{(p-1)/2}&m\text{ even}\\
0&m\text{ odd}
\end{cases}.
\]
\end{theorem}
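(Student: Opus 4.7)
Since these $L$-group computations are classical results originally due to Bak, my approach would reconstruct them via the standard toolkit: an arithmetic pullback square reducing to cleaner rings, followed by Rothenberg-type sequences to move between decorations.

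First I would exploit the cartesian square with corners $\Z[\Z/p]$, $\Z$, $\Z[\zeta]$, and $\F_p$, which is compatible with the involution $g\mapsto g^{-1}$. Ranicki's excision for quadratic $L$-theory produces a Mayer-Vietoris long exact sequence
\[
\cdots \to L^h_m(\Z[\Z/p]) \to L^h_m(\Z) \oplus L^h_m(\Z[\zeta]) \to L^h_m(\F_p) \to L^h_{m-1}(\Z[\Z/p]) \to \cdots.
\]
The groups $L^h_*(\Z)$ are $4$-periodic with pattern $(\Z,0,\Z/2,0)$, and for odd $p$ the groups $L^h_*(\F_p)$ are entirely $2$-primary. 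So the interesting input reduces to $L^h_*(\Z[\zeta])$, which I would compute by using that $\Z[\zeta]$ is a Dedekind domain with involution given by complex conjugation: the signatures at the $(p-1)/2$ complex places of $\Q(\zeta)$ furnish the free rank $(p-1)/2$ summand in every even degree, while a careful analysis of Hermitian forms over the residue fields combined with the ideal class group supplies the torsion, which is concentrated in degrees $\equiv 2\pmod{4}$.

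Second, to pass to $L^s$ and to $L^{\langle j\rangle}$ for $j\le 0$, I would apply the Ranicki-Rothenberg exact sequences
\[
\cdots \to \hat{H}^{m+1}(\Z/2;A) \to L^{\langle j\rangle}_m(\Z[\Z/p]) \to L^{\langle j-1\rangle}_m(\Z[\Z/p]) \to \hat{H}^m(\Z/2;A) \to \cdots
\]
where $A$ is $Wh(\Z/p)$, $\tilde{K}_0(\Z[\Z/p])$, or the appropriate lower $\tilde{K}$-group. One uses that $Wh(\Z/p)$ is a free $\Z$-module of rank $(p-3)/2$ (Bass-Milnor-Serre), that $\tilde{K}_0(\Z[\Z/p])\cong\op{Cl}(\Z[\zeta])$, and that $\tilde{K}_{-j}(\Z[\Z/p])=0$ for $j\ge 1$ (Carter). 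Computing the Tate cohomology of these modules under the involution $g\mapsto g^{-1}$ and chasing the sequences shows that the exponent-$2$ group $H(\Z/p)$ appearing in $L^h_{4k+2}$ is absorbed into the $Wh$-correction upon passing to $L^s$, and that the lower decorations stabilize at the same free rank $(p-1)/2$ with no extra torsion.

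The hardest step is pinning down the group $H(\Z/p)$ in $L^h_{4k+2}$ precisely, because this requires honest computations with Hermitian forms over the residue fields arising in the arithmetic square together with the $\Z/2$-action of complex conjugation on $\op{Cl}(\Z[\zeta])$. For the purposes of the present paper I would not redo this work but would simply cite Bak, since only the qualitative shape of the answer (in particular, the fact that $H(\Z/p)$ has exponent $2$ and disappears after inverting $2$) is actually needed downstream.
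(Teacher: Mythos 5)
The paper does not actually prove this theorem: it states it and refers the reader to Bak's papers \cite{BakEven} and \cite{BakOdd}, and that is the whole of the paper's treatment. Since your proposal ultimately also defers to Bak for the hard part, the two treatments coincide in what is being asserted and sourced, and your extra exposition is in the right spirit (arithmetic square isolating $\Z[\zeta]$ and $\F_p$, signatures at the complex places for the free rank $(p-1)/2$, Rothenberg sequences together with Bass--Milnor--Serre for $\op{Wh}(\Z/p)$, Rim for $\tilde K_0$, and Carter for the negative $K$-groups). One caution worth recording: unlike algebraic $K$-theory, quadratic $L$-theory does not satisfy excision for the cartesian square $\Z[\Z/p]\to\Z\times\Z[\zeta]\to\F_p$ automatically, so the step ``Ranicki's excision produces a Mayer--Vietoris sequence'' is not a formal consequence of the square alone. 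Bak's and Wall's published computations instead proceed by localization and completion at the relevant primes and by analyzing $L$-theory of local and global fields, establishing the needed exact sequences by hand; the arithmetic-square heuristic is a useful guide but glosses over this. None of this affects the correctness of what you claim, since you rightly decide to cite Bak rather than reconstruct the argument.
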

\begin{remark}
Although $\tilde{L}^s_m(\Z[\Z/p])$ and $\tilde{L}^{\langle j\rangle}_m(\Z[\Z/p])$ are isomorphic for $j\le 0$, the map
\[
\tilde{L}^s_m(\Z[\Z/p])\rightarrow\tilde{L}^{\langle j\rangle}_m(\Z[\Z/p])
\]
is not an isomorphism.
\end{remark}

\subsection{The $L^{\langle-\infty\rangle}$ Computation}

\begin{theorem}\label{thm: L^-infty computation}
There is an isomorphism
\[
L_m^{\langle -\infty\rangle}(\Z[\Gamma])\cong\left(\bigoplus_{(P)\in\mc{P}}H_m\left(\left(T_\rho^n\right)^{P};\tilde{L}^{\langle -\infty\rangle}(\Z[\Z/p])\right)\right)\oplus H_m\left(T_{\rho}^n;L(\Z)\right)^{\Z/p}.
\]
\end{theorem}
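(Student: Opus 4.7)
The plan is to apply the Farrell–Jones Conjecture to reduce to an equivariant homology computation, and then to determine that group by combining analyses $[\tfrac{1}{p}]$-locally and $[\tfrac{1}{2}]$-locally. Because $p$ is odd, a map of abelian groups that becomes an isomorphism after inverting each of $p$ and $2$ is itself an isomorphism, so this two-sided patching strategy will suffice.

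By \refthm{thm: FJC for Gamma } and \refprop{prop: transitivity}, we have $L^{\langle-\infty\rangle}_m(\Z[\Gamma])\cong H^\Gamma_m(\underline{E}\Gamma;\mathbf{L}^{\langle-\infty\rangle}_\Z)$. I would then apply \refprop{prop: LES from pushout diagram} to this equivariant homology theory. The kernel term is $\mc{K}_m\cong\bigoplus_{(P)\in\mc{P}}H_m(T^{b+c};\tilde{L}^{\langle-\infty\rangle}(\Z[\Z/p]))$, as already noted in \refsec{section: equiv homology and fjc}. After inverting $p$ the long exact sequence splits, and the identifications $(T^n_\rho)^P\simeq T^{b+c}$ (one fixed $(b+c)$-torus per conjugacy class of maximal finite subgroup) together with $\underline{B}\Gamma\simeq T^n_\rho/(\Z/p)$ and the usual transfer relating $H_m(T^n_\rho/(\Z/p);L(\Z))[\tfrac{1}{p}]$ to $H_m(T^n_\rho;L(\Z))^{\Z/p}[\tfrac{1}{p}]$ produce a natural $[\tfrac{1}{p}]$-isomorphism with the target of the theorem.

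To handle the $p$-local direction, I would use the standard isomorphism $H^\Gamma_m(\underline{E}\Gamma;\mathbf{L}^{\langle-\infty\rangle}_\Z)[\tfrac{1}{2}]\cong KO^\Gamma_m(\underline{E}\Gamma)[\tfrac{1}{2}]$ coming from the comparison of $L$-theory of $\Z$ with real $K$-theory after inverting $2$, and then invoke \refcor{cor: equiv K theory torsion free} to conclude the left-hand side of the theorem has no $p$-torsion. The right-hand side is also $p$-torsion free: the first summand by \refthm{thm: L groups of Z/p}, since $\tilde{L}^{\langle-\infty\rangle}_m(\Z[\Z/p])$ is either $0$ or free abelian; and the second because $L(\Z)$ has only $2$-primary torsion in its homotopy. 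Building a natural comparison map summand by summand (via the normalizer inclusions $N_\Gamma P\hookrightarrow\Gamma$ and the transfer to $\underline{B}\Gamma$) and comparing $2$-local ranks via the $KO$-theory calculations of \refsec{section: K theory} then upgrades the $[\tfrac{1}{p}]$-iso to an isomorphism after inverting $2$ as well.

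The main obstacle is the no-$p$-torsion claim: without it, the split short exact sequence produced after inverting $p$ cannot be lifted to an integral direct-sum decomposition. In \cite{DavisLuckTorusBundles} this step was essentially automatic because the singular set of $\underline{E}\Gamma$ was discrete, whereas here the non-discrete singular set forces us to go through the equivariant $K$-theory machinery of \refsec{section: K theory} and ultimately \refcor{cor: equiv K theory torsion free}. Once $p$-torsion freeness is established, assembling the pieces and identifying them with the named summands is largely a bookkeeping exercise modeled closely on \cite{DavisLuckTorusBundles}.
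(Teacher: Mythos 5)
Your proposal is correct and follows essentially the same route as the paper: Farrell--Jones plus Proposition \ref{prop: LES from pushout diagram}, splitting the resulting sequence after inverting $p$, and then passing to $KO^\Gamma_*(\underline{E}\Gamma)$ after inverting $2$ and using Corollary \ref{cor: equiv K theory torsion free} to rule out $p$-torsion -- which, as you say, is the real crux in the non-discrete singular set case. The only cosmetic difference is that the paper patches the two localizations by a partial-section argument on the free summand of $KO_m(\underline{B}\Gamma)\left[\frac{1}{2}\right]$ rather than by exhibiting a single integral comparison map, but both devices close the argument for these finitely generated groups.
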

\begin{proof}
We show that the isomorphism holds when $p$ is inverted and when $2$ is inverted.
This suffices since the groups are finitely generated.

Using the Proposition \ref{prop: transitivity} and Proposition \ref{prop: LES from pushout diagram}, we have the following long exact sequence.
\begin{equation}\label{les: L^-infty homology}
\cdots\rightarrow\mc{K}_m\xrightarrow{\phii_m}L^{\langle-\infty\rangle}_m(\Z[\Gamma ])\rightarrow H_m(\underline{B}\Gamma ;L(\Z))\rightarrow\mc{K}_{m-1}\rightarrow\cdots
\end{equation}
By the remark after Proposition \ref{prop: LES from pushout diagram}, we can make the identification 
\[
\mc{K}_m\cong\bigoplus_{(P)\in\mc{P}}H_m\left(\left(T_\rho^n\right)^P;\tilde{L}^{\langle-\infty\rangle}(\Z[\Z/p])\right).
\]
After inverting $p$, the sequence splits into short exact sequences
\[
0\rightarrow\mc{K}_m\rightarrow L_m^{\langle-\infty\rangle}(\Z[\Gamma ])\rightarrow H_m(\underline{B}\Gamma ;L(\Z))\rightarrow0.
\]
But when $p$ is inverted, the right hand term is isomorphic to $H_m\left(T_\rho^n;L(\Z)\right)^{\Z/p}$ by a transfer argument \cite[Proposition A.4]{DavisLuckKTheory}.
This is free so the sequence splits.
Note that $\mc{K}_m$ is a free abelian group since $\tilde{L}^{\langle-\infty\rangle}_m(\Z[\Z/p])$ is free abelian.

Now it remains to show that the groups are isomorphic after inverting $2$.
By \cite[Theorem 4.2]{DavisLuckTorusBundles}, equivariant $L$-theory homology and equivariant $KO$-homology agree after inverting $2$.
We obtain the resulting long exact sequence.
\[
\cdots\rightarrow\mc{K}_m\left[\frac{1}{2}\right]\xrightarrow{\phii_m} KO^\Gamma_m(\underline{E}\Gamma )\left[\frac{1}{2}\right]\rightarrow KO_m(\underline{B}\Gamma )\left[\frac{1}{2}\right]\rightarrow\mc{K}_{m-1}\left[\frac{1}{2}\right]\rightarrow\cdots
\]
We can write $KO_m(\underline{B}\Gamma )\left[\frac{1}{2}\right]\cong F\oplus A$ where $F$ is a free $\Z\left[\frac{1}{2}\right]$-module and $A$ is a $p$-torsion group.
The map $KO^\Gamma_m(\underline{E}\Gamma )\left[\frac{1}{2}\right]\rightarrow KO_m(\underline{B}\Gamma )\left[\frac{1}{2}\right]$ is invertible after inverting $p$ so there is a partial section defined on a $p$-power index subgroup of $KO_m(\underline{B}\Gamma )\left[\frac{1}{2}\right]$.
Since $KO_m^\Gamma (\underline{E}\Gamma )$ has no $p$-torsion, this subgroup must be a $p$-power index subgroup of $F$, hence isomorphic to $F$.
This partial splitting gives a subgroup $\mc{K}_m\left[\frac{1}{2}\right]\oplus F$, which is $p$-power index in $KO_m^\Gamma (\underline{E}\Gamma )\left[\frac{1}{2}\right]$.
Therefore, there is an isomorphism
\[
KO_m^\Gamma (\underline{E}\Gamma )\left[\frac{1}{2}\right]\cong\mc{K}_m\left[\frac{1}{2}\right]\oplus F.
\]
It remains to check that $F$ is isomorphic to $H_m(T_{\rho}^n;L(\Z))^{\Z/p}\left[\frac{1}{2}\right]$.
But this follows from the fact that $H_m\left(T_{\rho}^n;L(\Z)\right)^{\Z/p}\left[\frac{1}{2}\right]$ is a free $\Z\left[\frac{1}{2}\right]$-module isomorphic to a $p$-power index subgroup of $H_m(\underline{B}\Gamma ;L(\Z))\left[\frac{1}{2}\right]$.
\end{proof}

\subsection{Arbitrary Decorations}
For the groups studied in \cite{DavisLuckTorusBundles}, the assembly map $H^G_m\left(\underline{E}G;\mathbf{L}_{\Z}^{\langle j\rangle}\right)\rightarrow L^{\langle j\rangle}_m(\Z[G])$ is an isomorphism for all decorations $j$.
This is essentially because the normalizers of finite subgroups are isomorphic to $\Z/p$ and because the analogous result holds for $\Z/p$.
In our case, the normalizers are of the form $\Z^{b+c}\times\Z/p$ so the situation becomes more complicated.

In order to study $L$-theory with arbitrary decorations, we need to use Whitehead groups.
\begin{definition}\label{def: whitehead}
For a group $G$ and integer $m$, define the Whitehead group $\op{Wh}_m(G)$ to be
\[
\op{Wh}_m(G):=H_m^G\left(EG\rightarrow pt;\mathbf{K}_\Z\right).
\]
\end{definition}

The isomorphism in the statement below is \cite[3.29]{LuckRosenthal}.
That this isomorphism is induced by an inclusion of subgroups follows from the proof of \cite[Theorem 1.10]{LuckRosenthal}.
\begin{theorem}\label{thm: whitehead computation}
There is an isomorphism
\[
\op{Wh}_m(\Gamma )\cong\bigoplus_{(P)\in\mc{P}}\op{Wh}_m\left( N_\Gamma P\right)
\]
induced by the inclusions $N_{\Gamma}P\rightarrow\Gamma$.
\end{theorem}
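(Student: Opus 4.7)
The plan is to apply the Farrell--Jones conjecture (Theorem \ref{thm: FJC for Gamma }) in order to rewrite the Whitehead group as a relative equivariant $\mathbf{K}_\Z$-homology, and then decompose using the geometric pushout structure of the classifying spaces. Specifically, since Farrell--Jones gives $K_m(\Z[\Gamma ]) \cong H_m^\Gamma (\underline{\underline{E}}\Gamma ;\mathbf{K}_\Z)$, and the analogous statement holds for each $N_\Gamma P\cong\Z^{b+c}\times\Z/p$, we can write
\[
\op{Wh}_m(\Gamma )\cong H_m^\Gamma (E\Gamma \to\underline{\underline{E}}\Gamma ;\mathbf{K}_\Z)\quad\text{and}\quad\op{Wh}_m(N_\Gamma P)\cong H_m^{N_\Gamma P}(EN_\Gamma P\to\underline{\underline{E}}N_\Gamma P;\mathbf{K}_\Z).
\]

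The key geometric input is a L\"uck--Weiermann-type pushout for $\underline{\underline{E}}\Gamma $ analogous to diagram (\ref{diagram: pushout for EGamma }). By Lemma \ref{lem: properties of Gamma }, every infinite virtually cyclic subgroup of $\Gamma $ is isomorphic to $\Z$ or $\Z\times\Z/p$, and in the latter case it lies inside the centralizer, hence the normalizer, of its unique subgroup of order $p$. Consequently every ``exotic'' cell needed to build $\underline{\underline{E}}\Gamma $ from $\underline{E}\Gamma $ already appears inside some $N_\Gamma P$. Splicing such a pushout together with diagram (\ref{diagram: pushout for EGamma }) and using the induction isomorphism $H_m^\Gamma (\Gamma \times_{N_\Gamma P}X;\mathbf{K}_\Z)\cong H_m^{N_\Gamma P}(X;\mathbf{K}_\Z)$, the long exact sequence for the pair $E\Gamma \to\underline{\underline{E}}\Gamma $ breaks into a direct sum indexed by $(P)\in\mc{P}$. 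Each summand is $H_m^{N_\Gamma P}(EN_\Gamma P\to\underline{\underline{E}}N_\Gamma P;\mathbf{K}_\Z)$, which by the paragraph above is $\op{Wh}_m(N_\Gamma P)$.

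The main obstacle will be setting up the pushout for $\underline{\underline{E}}\Gamma $ and verifying that it is compatible with the pushout for $\underline{E}\Gamma $ under induction from $N_\Gamma P$ to $\Gamma $. Since $\Z\times\Z/p$ carries nontrivial Nil contributions to $K$-theory, it is essential that every such virtually cyclic subgroup of $\Gamma $ be conjugate into a unique $N_\Gamma P$, so that these Nil terms are absorbed into the normalizer summands and do not produce extra contributions. Once the compatible pushouts are in place, the naturality of the induction isomorphism and of the Farrell--Jones identification automatically exhibits the decomposition as induced by the inclusions $N_\Gamma P\hookrightarrow\Gamma $, yielding the stated isomorphism.
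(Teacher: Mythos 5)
The paper does not actually prove this statement: it quotes the isomorphism from \cite[3.29]{LuckRosenthal} and the claim about it being induced by inclusions from the proof of \cite[Theorem 1.10]{LuckRosenthal}. Your outline is therefore an attempt at the underlying argument, and its skeleton is right as far as the first stage goes: by the Farrell--Jones Conjecture (for $\Gamma$ and for each $N_\Gamma P$) one may replace $\op{Wh}_m$ by the relative term $H_m^{(-)}(E(-)\to\underline{\underline{E}}(-);\mathbf{K}_{\Z})$, and the pushout \refeq{diagram: pushout for EGamma  } together with the induction isomorphism identifies $H_m^{\Gamma}(E\Gamma\to\underline{E}\Gamma;\mathbf{K}_\Z)$ with $\bigoplus_{(P)}H_m^{N_\Gamma P}(EN_\Gamma P\to\underline{E}N_\Gamma P;\mathbf{K}_\Z)$. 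What remains is to match $H_m^{\Gamma}(\underline{E}\Gamma\to\underline{\underline{E}}\Gamma;\mathbf{K}_\Z)$ with $\bigoplus_{(P)}H_m^{N_\Gamma P}(\underline{E}N_\Gamma P\to\underline{\underline{E}}N_\Gamma P;\mathbf{K}_\Z)$, and this is exactly where your argument has a gap.

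The mechanism you propose for that step --- a L\"uck--Weiermann pushout for $\mc{V}cyc$ over $\mc{F}in$ whose cells are induced from the subgroups $N_\Gamma P$, justified by the observation that each $V\cong\Z\times\Z/p$ sits inside $N_\Gamma P$ for its unique order-$p$ subgroup $P$ --- does not work as stated. The L\"uck--Weiermann construction for this passage is governed not by containment of $V$ in some normalizer but by the commensurator $N_\Gamma[V]=\{g:\abs{gVg^{-1}\cap V}=\infty\}$. For $V=C\times P$ with $C\le L^{\Z/p}$ infinite cyclic, every element $(z,y^j)$ of $\Gamma$ centralizes $C$ (one computes $(z,y^j)(v,1)(z,y^j)^{-1}=(\rho^j(v),1)=(v,1)$ for $v\in L^{\Z/p}$), so $gVg^{-1}\cap V\supseteq C$ for all $g$ and hence $N_\Gamma[V]=\Gamma$. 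The cells attached to $\underline{E}\Gamma$ to build $\underline{\underline{E}}\Gamma$ are therefore \emph{not} induced from the $N_\Gamma P$, and your ``splicing'' of the two pushouts does not produce the claimed direct sum. To close the gap one must instead identify the relative term $H_*^{\Gamma}(\underline{E}\Gamma\to\underline{\underline{E}}\Gamma;\mathbf{K}_\Z)$ with a sum of Bass/Waldhausen Nil-groups ($NK_*(\Z[\Z/p])$-type terms) indexed by the appropriate classes of infinite virtually cyclic subgroups with torsion, do the same for each $\Z^{b+c}\times\Z/p$ using the Bass--Heller--Swan decomposition of $\op{Wh}_m(N_\Gamma P)$, and check that induction matches the two index sets --- which is the content of the argument in \cite{LuckRosenthal} and is genuinely more delicate than the containment statement you invoke.
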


Since $\op{Wh}_m(\Z^{b+c}\times\Z/p)\cong\bigoplus_{k=0}^{b+c}\op{Wh}_{m-k}(\Z/p)^{\binom{b+c}{k}}$ and $\op{Wh}_m(\Z/p)=0$ when $m\le-1$, $\op{Wh}_m(\Gamma)=0$ for $m\le-1$.

\Lj

\begin{proof}
The homology group $H_m\left((T_{\rho}^n)^P;\tilde{L}^{\langle-\infty\rangle}(\Z[\Z/p])\right)$ fits into the exact sequence
\[
\cdots\rightarrow H_m\left((T_{\rho}^n)^P;L(\Z)\right)\rightarrow H_m\left((T_{\rho}^n)^P;L^{\langle-\infty\rangle}(\Z[\Z/p])\right)\rightarrow H_m\left((T_{\rho}^n)^P;\tilde{L}^{\langle-\infty\rangle}(\Z[\Z/p])\right)\rightarrow\cdots
\]
where the map $L(\Z)\rightarrow L^{\langle-\infty\rangle}(\Z[\Z/p])$ splits.
By the Farrell-Jones conjecture (or Shaneson splitting), we may identify
\[
H_m\left((T_{\rho}^n)^P;L(\Z)\right)\cong L_m(\Z[W_\Gamma P])\hspace{1cm} H_m\left((T_{\rho}^n)^P;L^{\langle-\infty\rangle}(\Z[\Z/p])\right)\cong L_m^{\langle-\infty\rangle}(\Z[N_\Gamma P]).
\]
It follows from Theorem \ref{thm: L^-infty computation} and the resulting identification
\[
H_m\left(T_{\rho}^n)^P;\tilde{L}^{\langle-\infty\rangle}(\Z[\Z/p])\right)\cong L^{\langle-\infty\rangle}_m(\Z[N_\Gamma P])/L_m^{\langle-\infty\rangle}(\Z[W_\Gamma P])
\]
that the result is true when $2$ is inverted.
Now, we need to check that the result is true when $p$ is inverted.
Since $\op{Wh}_j(N_\Gamma P),\op{Wh}_j(\Gamma)=0$ for $j\le-1$, the Farrell-Jones Conjecture and Proposition \ref{prop: LES from pushout diagram} imply that
\[
\bigoplus_{(P)\in\mc{P}}L_m^{\langle j\rangle}(\Z[N_\Gamma P])/L_m^{\langle j\rangle}(\Z[W_\Gamma P])\rightarrow L_m^{\langle j\rangle}(\Z[\Gamma])\xrightarrow{\beta^{\langle j\rangle}_m}H_m(\underline{B}\Gamma;L(\Z))
\]
is a split short exact sequence when $p$ is inverted.

We claim the same is true for $j>-1$.
To verify this claim, we induct on $j$, using $j=-1$ as the base case.
Consider the following diagram
\[
\begin{tikzpicture}[scale =2.5]
\node (A) at (0,4) {$\vdots$};\node (B) at (2,4) {$\vdots$};\node (X) at (4,4) {$\vdots$};
\node (C) at (0,3) {$\bigoplus_{(P)\in\mc{P}}\hat{H}^{m+1}\left(\Z/2;\op{Wh}_j(N_\Gamma P)\right)$};\node (D) at (2,3) {$\hat{H}^{m+1}\left(\Z/2;\op{Wh}_j(\Gamma)\right)$};\node (E) at (4,3) {$0$};
\node (F) at (0,2) {$\bigoplus_{(P)\in\mc{P}}L^{\langle j+1\rangle}_m(\Z[N_\Gamma P])/L_m^{\langle j+1\rangle}(\Z[W_\Gamma P])$};\node (G) at (2,2) {$L^{\langle j+1\rangle}_m(\Z[\Gamma])$};\node (H) at (4,2) {$H_m(\underline{B}\Gamma;L(\Z))$};
\node (I) at (0,1) {$\bigoplus_{(P)\in\mc{P}}L^{\langle j\rangle}_m(\Z[N_\Gamma P])/L_m^{\langle j\rangle}(\Z[W_\Gamma P])$};\node (J) at (2,1) {$L^{\langle j\rangle}_m(\Z[\Gamma])$};\node (K) at (4,1) {$H_m(\underline{B}\Gamma;L(\Z))$};
\node(L) at (0,0) {$\vdots$};\node (M) at (2,0) {$\vdots$};\node (Y) at (4,0) {$\vdots$};
\path[->] (A) edge (C) (B) edge (D) (C) edge (D) (D) edge (E) (C) edge (F) (D) edge (G) (E) edge (H) (F) edge (G) (G) edge node[above]{$\beta^{\langle j+1\rangle}_m$} (H) (F) edge (I) (G) edge (J) (H) edge node[left]{$=$} (K) (I) edge (J) (J) edge node[above]{$\beta^{\langle j\rangle}_m$} (K) (I) edge (L)(J) edge (M) (X) edge (E) (K) edge (Y);
\end{tikzpicture}
\]

The left vertical column comes from the Rothenberg sequence and the observation that $W_\Gamma P$ is a free abelian group which splits from $N_\Gamma P$.
The middle column is a Rothenberg sequence.
The map $\beta^{\langle j+1\rangle}_m$ is defined so that the diagram commutes.
If we invert $p$, the bottom row is split exact by hypothesis.
Then, the middle row is exact.

Now, we must check that $\beta^{\langle j+1\rangle}_m$ is split when $p$ is inverted.
We have the commuting diagram
\[
\begin{tikzpicture}[scale =2]
\node (A) at (0,3) {$H^{\Gamma}_m\left(\underline{E}\Gamma;\mathbf{L}^{\langle j+1\rangle}_\Z\right)$};\node (B) at (2,3) {$H_m(\underline{B}\Gamma;L(\Z))$};
\node (C) at (0,2) {$H^{\Gamma}_m\left(\underline{E}\Gamma;\mathbf{L}^{\langle-\infty\rangle}_\Z\right)$};\node (D) at (2,2) {$H_m(\underline{B}\Gamma;L(\Z))$};
\node (E) at (0,1) {$L^{\langle-\infty\rangle}_m(\Z[\Gamma])$};\node (F) at (2,1) {$H_m(\underline{B}\Gamma;L(\Z))$};
\node (G) at (0,0) {$L^{\langle j+1\rangle}_m(\Z[\Gamma])$};\node (H) at (2,0) {$H_m(\underline{B}\Gamma;L(\Z))$};
\path[->] (A) edge node[above]{$\overline{\beta}^{\langle j+1\rangle}_m$} (B) (A) edge (C) (B) edge (D) (C) edge (D) (C) edge node[right]{$A^{\langle-\infty\rangle}_m$} (E) (D) edge (F) (E) edge (F) (G) edge (E) (H) edge (F) (G) edge node[above]{$\beta^{\langle j+1\rangle}_m$}(H) (A) edge [bend right=65] node[left]{$A^{\langle j+1\rangle}_m$} (G);
\end{tikzpicture}.
\]
where the maps $A^{\langle-\infty\rangle}_m$ and $A^{\langle j\rangle}_m$ are assembly maps.
The top map comes from Proposition \ref{prop: LES from pushout diagram} so it splits after $p$ is inverted.
The vertical maps on the right are equalities.
The map $L_m^{\langle-\infty\rangle}(\Z[\Gamma])\rightarrow H_m(\underline{B}\Gamma;L(\Z))$ is chosen so the middle square commutes which can be done since $A_m^{\langle-\infty\rangle}$ is an isomorphism.
The bottom square commutes from the definition of $\beta_m^{\langle j+1\rangle}$.
The composite $A^{\langle j+1\rangle}_m\circ\left(\overline{\beta}^{\langle j+1\rangle}_m\right)^{-1}$ gives us the desired splitting of $\beta^{\langle j+1\rangle}_m$.

Thus, when $p$ is inverted, we have split short exact sequences
\begin{equation}\label{eq: L split with deco}
\bigoplus_{(P)\in\mc{P}}L_m^{\langle j+1\rangle}(\Z[N_\Gamma P])/L_m^{\langle j+1\rangle}(\Z[W_\Gamma P])\rightarrow L_m^{\langle j+1\rangle}(\Z[\Gamma])\xrightarrow{\beta_m^{\langle j+1\rangle}}H_m(\underline{B}\Gamma;L(\Z)).
\end{equation}
As $H_m(\underline{B}\Gamma;L(\Z))\left[\frac{1}{p}\right]\cong\left(H_m(T_{\rho}^n;L(\Z))^{\Z/p}\right)\left[\frac{1}{p}\right]$, this finishes the proof of the theorem.
\end{proof}

\section{Computation of the Structure Set}\label{section: structure set}

\subsection{A Brief Review of Surgery}
Let $M$ be a simple Poincar\'e duality complex.
Its \emph{geometric simple structure set}, denoted $\mc{S}^{geo,s}(M)$, is defined to be the set of equivalence classes of simple homotopy equivalences $f:N\rightarrow M$.
Two simple homotopy equivalences $f:N\rightarrow M$ and $f':N'\rightarrow M$ are equivalent if there is a homeomorphism $g:N\rightarrow N'$ such that $f$ and $f'\circ g$ are homotopic.
The geometric simple structure set is contained in geometric surgery exact sequence
\[
\cdots\rightarrow\mc{N}(M\times I)\rightarrow L^s_{n+1}(\Z[\pi_1(M)])\rightarrow\mc{S}^{geo,s}(M)\rightarrow\mc{N}(N)\rightarrow L^s_n(\Z[\pi_1(M)]).
\]
This can be found in \cite{WallSCM}.
It is not clear from this description that there is an abelian group structure on $\mc{S}^{geo,s}(M)$.

In \cite{RanickiLTheory}, Ranicki defines an algebraic surgery exact sequence valid for any CW complex $X$.
\begin{align*}
\cdots\rightarrow H_{m+1}(X;L(\Z))\xrightarrow{A_{m+1}(X)} &L^s_{m+1}(\Z[\pi_1X])\xrightarrow{\xi_{m+1}(X)}\mc{S}^{per,s}_{m+1}(X)\\
&\xrightarrow{\eta_{m+1}(X)}H_m(X;L(\Z))\xrightarrow{A_m(X)}L^s_m(\Z[\pi_1X])\rightarrow\cdots
\end{align*}
The groups $\mc{S}^{per,s}_m(X)$ are called the periodic simple structure groups.
One can, for instance, define the assembly map $A_m(X)$ and define $\mc{S}^{per,s}_m(X)$ to be the homotopy groups of the fiber.
When $X$ is an $m$-dimensional simple Poincar\'e complex, elements of $\mc{S}^{per,s}_{m+1}(X)$ can be represented by homotopy equivalences $f:N\rightarrow X$ where $N$ is a homology manifold (see \cite{HomologyManifolds}).

If we define
\[
\mc{S}^{per,\langle j\rangle}_m(X):=H_m\left(X\rightarrow\op{pt};\mathbf{L}^{\langle j\rangle}_{\Z}\right)
\]
then we have algebraic surgery exact sequences for all decorations.
The techniques used to compute $\mc{S}^{per,s}_{n+\ell+1}(M)$ will also compute $\mc{S}^{per,\langle j\rangle}_{n+\ell+1}(M)$ so we state our results in this level of generality.

Let $L(\Z)\langle1\rangle$ denote the $1$-connective cover of $L(\Z)$.
Then, there is an algebraic surgery exact sequence
\begin{align*}
\cdots \rightarrow H_{m+1}(X;L(\Z)\langle1\rangle))\xrightarrow{A^{\langle1\rangle}_{m+1}(X)}&L_{m+1}^s(\Z[\pi_1(X)])\xrightarrow{\xi^{\langle1\rangle}_{m+1}(X)}\mc{S}^{\langle1\rangle,s}_{m+1}(X)\\
&\xrightarrow{\eta^{\langle1\rangle}_{m+1}(X)}H_m(X;L\langle1\rangle)\xrightarrow{A^{\langle1\rangle}_m(X)}L^s_m(\Z[\pi_1(X)])\rightarrow\cdots.
\end{align*}
When $X$ is an $m$ dimensional closed manifold, Ranicki shows that
\[
\mc{S}^{geo,s}(X)\cong\mc{S}^{\langle 1\rangle,s}_{m+1}(X).
\]

Just as we did for the periodic structure sets, we may define structure sets $\mc{S}^{geo,\langle j\rangle}_m(X)$ so that there is an algebraic surgery exact sequence
\[
\cdots\rightarrow H_m(X;L(\Z)\langle1\rangle)\xrightarrow{A^{\langle1\rangle}_{m+1}(X)}L_m^{\langle j\rangle}(\Z[\pi_1(X)])\xrightarrow{\xi^{\langle1\rangle}_{m+1}(X)}\mc{S}^{geo,\langle j\rangle}_m(X)\rightarrow\cdots
\]
for all decorations.
We opt to use the notation $\mc{S}^{geo,\langle j\rangle}_m(X)$ rather than the more appropriate $\mc{S}^{\langle1\rangle,\langle j\rangle}_m(X)$ in order to avoid confusion.

\subsection{The Periodic Structure Set of $B\Gamma  $}
Here, we will record computations of periodic structure sets found in \cite{DavisLuckTorusBundles} and \cite{LuckRosenthal}.

The following results are \cite[Theorem 6.1]{DavisLuckTorusBundles} and \cite[Theorem 1.13]{LuckRosenthal}, respectively.

\begin{theorem}\label{thm: structures on BP}
Let $p$ be an odd prime and let $P$ be a finite $p$-group.
Then, the homomorphism $\xi_*(BP):L^{\langle j\rangle}_*(\Z[P])\rightarrow\mc{S}_*^{per,\langle j\rangle}(BP)$ induces a $\frac{1}{p}$-localization
\[
\tilde{\xi}_*(BP):\tilde{L}^{\langle j\rangle}_*(\Z[P])\rightarrow\mc{S}_*^{per,\langle j\rangle}(BP).
\]
In particular,
\[
\mc{S}_m^{per,\langle j\rangle}(BP)\cong\tilde{L}_m^{\langle j\rangle}(\Z[P])\left[\frac{1}{p}\right]\cong\begin{cases}
\Z\left[\frac{1}{p}\right]^{(p-1)/2}&m\text{ even}\\
0&m\text{ odd}
\end{cases}
\]
when $j\neq 1$.
\end{theorem}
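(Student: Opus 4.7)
The plan is to run the algebraic surgery exact sequence of Ranicki for the space $BP$ and then invert $p$. For a finite $p$-group $P$ and $p$ odd, the reduced integral homology $\tilde H_*(BP;\Z)$ is annihilated by a power of $p$, so the same is true of $\tilde H_*(BP;L(\Z))$ after smashing with the $L$-spectrum. Consequently, after inverting $p$, the map $BP\to\op{pt}$ induces an isomorphism $H_m(BP;L(\Z))\left[\tfrac{1}{p}\right]\cong L_m(\Z)\left[\tfrac{1}{p}\right]$, and under this identification the assembly map $A_m(BP)\left[\tfrac{1}{p}\right]$ becomes the map $L_m(\Z)\left[\tfrac{1}{p}\right]\to L^{\langle j\rangle}_m(\Z[P])\left[\tfrac{1}{p}\right]$ induced by the inclusion $\Z\inj\Z[P]$.

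The augmentation $\epsilon:\Z[P]\surj\Z$ coming from $P\to 1$ provides a retraction of that inclusion, so we obtain a natural splitting $L^{\langle j\rangle}_m(\Z[P])\cong L_m(\Z)\oplus\tilde L^{\langle j\rangle}_m(\Z[P])$ (one has to check the decoration $j$ is unaffected by this splitting, which is standard since the augmentation induces a ring map of Whitehead groups that respects the decorations). Feeding this into the surgery exact sequence inverted at $p$ breaks it into short exact sequences
\[
0\rightarrow L_m(\Z)\left[\tfrac{1}{p}\right]\rightarrow L^{\langle j\rangle}_m(\Z[P])\left[\tfrac{1}{p}\right]\xrightarrow{\xi_m}\mc{S}^{per,\langle j\rangle}_m(BP)\left[\tfrac{1}{p}\right]\rightarrow0,
\]
and the resulting isomorphism $\tilde L^{\langle j\rangle}_m(\Z[P])\left[\tfrac{1}{p}\right]\cong\mc{S}^{per,\langle j\rangle}_m(BP)\left[\tfrac{1}{p}\right]$ is exactly $\tilde\xi_m$ after localization.

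The final step, and the delicate one, is to promote this to the claim that $\tilde\xi_m$ \emph{is} the $\tfrac{1}{p}$-localization, i.e.\ that $\mc{S}^{per,\langle j\rangle}_m(BP)$ is already a $\Z\left[\tfrac{1}{p}\right]$-module. The approach is a transfer argument: the finite cover $EP\rightarrow BP$ yields transfer maps on both $L$-homology and $L$-theory (hence on $\mc{S}^{per,\langle j\rangle}$ by the five-lemma applied to the surgery exact sequence) whose composition with the maps induced by $EP\rightarrow BP$ is multiplication by $\abs{P}$. Since $\mc{S}^{per,\langle j\rangle}_m(\op{pt})=0$, a diagram chase shows that multiplication by $\abs{P}$ on $\mc{S}^{per,\langle j\rangle}_m(BP)$ factors through a map whose analysis forces it to be invertible; this is the only nontrivial input and is the main obstacle, but it is a familiar maneuver in the Davis--L\"uck style for which the cited references provide the template.

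Once the $\tfrac{1}{p}$-localization statement is in hand, the explicit formula in the ``in particular'' clause follows by substituting the values of $\tilde L^{\langle j\rangle}_m(\Z[\Z/p])$ from Theorem \ref{thm: L groups of Z/p}: the $\Z$-summands of rank $(p-1)/2$ survive the $\tfrac{1}{p}$-localization intact in even degrees, the $2$-exponent group $H(\Z/p)$ present in $\tilde L^h_{4k+2}$ gets wiped out whenever $j\ne 1$, and the odd-degree groups are already zero. So the genuinely new work is concentrated in the two points above (reducing the surgery sequence via $p$-locality of $\tilde H_*(BP)$ and establishing the $\Z\left[\tfrac{1}{p}\right]$-module structure via transfer), and the numerical computation is immediate.
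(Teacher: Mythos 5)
First, note that the paper does not prove this statement itself: it is imported verbatim as \cite[Theorem 6.1]{DavisLuckTorusBundles}, with only the remark that the proof given there for the decoration $s$ goes through for all decorations. So the real question is whether your argument would reconstruct that proof, and it does not. The first half of your argument is fine: since every reduced group $\tilde H_s(BP;\Z)$ is a finite $p$-group and $\tilde H_s(BP;\Z/2)=0$, the reduced $\mathbf{L}(\Z)$-homology of $BP$ is $p$-torsion, so after inverting $p$ the assembly map becomes the split injection $L_m(\Z)\left[\frac{1}{p}\right]\rightarrow L^{\langle j\rangle}_m(\Z[P])\left[\frac{1}{p}\right]$ and the surgery sequence yields $\mc{S}^{per,\langle j\rangle}_m(BP)\left[\frac{1}{p}\right]\cong\tilde L^{\langle j\rangle}_m(\Z[P])\left[\frac{1}{p}\right]$. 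But that only identifies the structure group after inverting $p$; the entire content of the theorem is the integral statement, i.e.\ that $\mc{S}^{per,\langle j\rangle}_m(BP)$ is already a $\Z\left[\frac{1}{p}\right]$-module. That is exactly the step you defer to a transfer argument, and the transfer argument as you state it is wrong.

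Concretely: the composite $\op{ind}\circ\op{res}$ on $L^{\langle j\rangle}_*(\Z[P])$ associated to the cover $EP\rightarrow BP$ is \emph{not} multiplication by $\abs{P}$. By Frobenius reciprocity it is multiplication by the class of the regular permutation form $[\Z[P]]$ in the Grothendieck--Witt Green functor, an element whose augmentation is $\abs{P}$ but which is not $\abs{P}$ times the unit; this discrepancy between the transfer on $L_*(\Z[\pi_1])$ and the $\abs{P}$-fold transfer on $H_*(BP;\mathbf{L}(\Z))$ is precisely why the assembly map for finite groups is subtle, and it blocks the five-lemma construction of a compatible transfer on $\mc{S}^{per,\langle j\rangle}$. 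Worse, even if the identities you assert did hold, the standard conclusion from a transfer factoring through $\mc{S}^{per,\langle j\rangle}_m(EP)=0$ would be that multiplication by $\abs{P}$ is \emph{zero} on $\mc{S}^{per,\langle j\rangle}_m(BP)$, i.e.\ that the group is $p$-torsion --- the opposite of the claim that it is $\Z\left[\frac{1}{p}\right]^{(p-1)/2}$, and incompatible with your own computation after inverting $p$. The genuine input needed at the prime $p$ is detailed knowledge of the assembly map for odd $p$-groups (in \cite{DavisLuckTorusBundles} this is done via the identification of $\mathbf{L}(\Z)_{(p)}$ with $\mathbf{KO}_{(p)}$ and the known structure of $\tilde{KO}_*(BP)$ and of $L_*(\Z[P])$, not by a formal transfer). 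The final numerical substitution from Theorem \ref{thm: L groups of Z/p} is fine once the localization statement is granted.
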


\begin{theorem}\label{thm: structures on BGamma  }
There is an isomorphism
\[
\bigoplus_{(P)\in\mc{P}}\bigoplus_{i=0}^{b+c}\mc{S}^{per,\langle j-i\rangle}_{m-i}(BP)^{\binom{b+c}{i}}\cong\mc{S}^{per,\langle j\rangle}_m(B\Gamma ).
\]
\end{theorem}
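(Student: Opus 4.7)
The plan is to combine the algebraic surgery exact sequence for $B\Gamma$ with the $L$-theory computation in Theorem \ref{thm: L^j computation} and the structure set computation for $BP$ in Theorem \ref{thm: structures on BP}. I would begin with the exact sequence
\[
\cdots \to H_m(B\Gamma; L(\Z)) \xrightarrow{A_m} L^{\langle j\rangle}_m(\Z[\Gamma]) \to \mc{S}^{per,\langle j\rangle}_m(B\Gamma) \to H_{m-1}(B\Gamma; L(\Z)) \xrightarrow{A_{m-1}} L^{\langle j\rangle}_{m-1}(\Z[\Gamma]) \to \cdots
\]
and use Theorem \ref{thm: L^j computation} to identify the target of each assembly map with $H_m(T_\rho^n; L(\Z))^{\Z/p} \oplus \bigoplus_{(P)} L^{\langle j\rangle}_m(\Z[N_\Gamma P])/L^{\langle j\rangle}_m(\Z[W_\Gamma P])$.

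Next I would analyze the assembly map after inverting $p$. A transfer argument for the $\Z/p$-cover $BL \to B\Gamma$, together with collapse of the Serre spectral sequence for $T_\rho^n \to B\Gamma \to B\Z/p$ modulo $p$, identifies $H_m(B\Gamma; L(\Z))\left[\frac{1}{p}\right]$ with $H_m(T_\rho^n; L(\Z))^{\Z/p}\left[\frac{1}{p}\right]$. The assembly factors as $H_m(B\Gamma; L(\Z)) \to H_m(\underline{B}\Gamma; L(\Z)) \to L^{\langle j\rangle}_m(\Z[\Gamma])$, and by the splitting \ref{eq: L split with deco} used in the proof of Theorem \ref{thm: L^j computation}, the second arrow maps onto the $H_m(T_\rho^n; L(\Z))^{\Z/p}$ direct summand after inverting $p$. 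Hence $A_m$ becomes injective with cokernel $\bigoplus_{(P)}(L^{\langle j\rangle}_m(\Z[N_\Gamma P])/L^{\langle j\rangle}_m(\Z[W_\Gamma P]))\left[\frac{1}{p}\right]$, and $A_{m-1}$ is likewise injective. Exactness then yields
\[
\mc{S}^{per,\langle j\rangle}_m(B\Gamma)\left[\tfrac{1}{p}\right] \cong \bigoplus_{(P)\in\mc{P}}\bigl(L^{\langle j\rangle}_m(\Z[N_\Gamma P])/L^{\langle j\rangle}_m(\Z[W_\Gamma P])\bigr)\left[\tfrac{1}{p}\right].
\]
Iterated Shaneson splitting applied to $N_\Gamma P = \Z^{b+c} \times P$ and $W_\Gamma P = \Z^{b+c}$ rewrites this as $\bigoplus_{(P)}\bigoplus_{i=0}^{b+c}\tilde{L}^{\langle j-i\rangle}_{m-i}(\Z[P])\left[\frac{1}{p}\right]^{\binom{b+c}{i}}$, and Theorem \ref{thm: structures on BP} converts each factor into $\mc{S}^{per,\langle j-i\rangle}_{m-i}(BP)^{\binom{b+c}{i}}$, producing the claimed right-hand side.

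The remaining step, which I expect to be the main obstacle, is to drop the $\left[\frac{1}{p}\right]$ on the left, i.e., verify that $\mc{S}^{per,\langle j\rangle}_m(B\Gamma)$ is itself a $\Z\left[\frac{1}{p}\right]$-module. The right-hand side is automatically $p$-inverted via Theorem \ref{thm: structures on BP}, so one must show that no $p$-torsion sneaks into the structure set through the exact sequence, equivalently that the $p$-primary parts of $\op{coker}(A_m)$ and $\ker(A_{m-1})$ vanish. I would attempt this along the lines of \cite{DavisLuckTorusBundles}: Corollary \ref{cor: KO-homology of BGamma  } and Corollary \ref{cor: equiv K theory torsion free}, together with the $L$-theoretic analogues after inverting $2$ and the $p$-inverted splitting of Proposition \ref{prop: LES from pushout diagram}, should combine to force the assembly map to be a $p$-local isomorphism onto the homology summand, removing the localization and completing the proof.
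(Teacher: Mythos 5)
The paper does not prove this statement itself; it cites it as \cite[Theorem 1.13]{LuckRosenthal} (with a correction to the indexing noted in the subsequent remark). The proof there runs along genuinely different lines from yours: it applies the homotopy pushout \refdiag{diagram: pushout for EGamma  } to the pair $(E\Gamma,\underline{E}\Gamma)$, shows $H_m^\Gamma(\underline{E}\Gamma\to\mathrm{pt};\mathbf{L}^{\langle j\rangle}_\Z)=0$ using the Farrell--Jones Conjecture and the Whitehead computation \refthm{thm: whitehead computation}, and thereby identifies $\mc{S}^{per,\langle j\rangle}_m(B\Gamma)$ with $\bigoplus_{(P)}H_m^{N_\Gamma P}(EN_\Gamma P\to\underline{E}N_\Gamma P;\mathbf{L}^{\langle j\rangle}_\Z)\cong\bigoplus_{(P)}\mc{S}^{per,\langle j\rangle}_m(BN_\Gamma P)$; Shaneson splitting and \refthm{thm: structures on BP} then finish. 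Because the $\Z\!\left[\tfrac{1}{p}\right]$-module structure enters through \refthm{thm: structures on BP} applied to each $BP$, there is nothing left to check at the end.

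Your approach, by contrast, works inside the algebraic surgery exact sequence for $B\Gamma$, and the final step as you describe it contains a genuine gap. You assert that $\mc{S}^{per,\langle j\rangle}_m(B\Gamma)$ being a $\Z\!\left[\tfrac{1}{p}\right]$-module is ``equivalently'' the vanishing of the $p$-primary parts of $\op{coker}(A_m)$ and $\ker(A_{m-1})$. This equivalence fails in both directions. $L^{\langle j\rangle}_m(\Z[\Gamma])$ is finitely generated (\refthm{thm: L^j computation}), so $\op{coker}(A_m)$ is finitely generated; meanwhile in the even degrees the right-hand side of the theorem contains copies of $\Z\!\left[\tfrac{1}{p}\right]$, so the structure set is not finitely generated. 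The only way the extension $0\to\op{coker}(A_m)\to\mc{S}^{per,\langle j\rangle}_m(B\Gamma)\to\ker(A_{m-1})\to 0$ can have $\Z\!\left[\tfrac{1}{p}\right]$-module total term is if $\ker(A_{m-1})$ contributes infinite $p$-divisible $p$-torsion (Pr\"ufer-type) which glues onto $\op{coker}(A_m)$ in a non-split fashion, exactly as in $0\to\Z\to\Z\!\left[\tfrac{1}{p}\right]\to\Z/p^\infty\to 0$. So showing the $p$-primary parts of those end terms vanish is not just insufficient---it is actually impossible, and the $K$-theory torsion-freeness corollaries cannot supply it. (There is also a more innocuous imprecision earlier: the assembly map factors through $H^\Gamma_m(\underline{E}\Gamma;\mathbf{L}^{\langle j\rangle}_\Z)$, not through $H_m(\underline{B}\Gamma;L(\Z))$; this does not hurt the $p$-inverted computation, but it should be stated correctly.) To salvage your route you would need to identify and control the Pr\"ufer contribution coming from $\ker(A_{m-1})\subseteq H_{m-1}(B\Gamma;L(\Z))$, which is essentially re-deriving what the pushout argument gives for free.
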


\begin{remark}
Although \cite[Theorem 6.1]{DavisLuckTorusBundles} is only stated for the decoration $s$, the proof is valid for all decorations.
\end{remark}

\begin{remark}
In the proof of \cite[Theorem 1.13]{LuckRosenthal}, the authors use an isomorphism $\bigoplus_{P\in \mc{P}}\bigoplus_{i=0}^{b+c}\tilde{K}_j(\Z[P])^{\binom{b+c}{i}}\cong \tilde{K}_j(\Z[\Gamma])$ rather than the correct isomorphism
\[
\bigoplus_{(P)\in\mc{P}}\bigoplus_{i=0}^{b+c}\tilde{K}_{j-i}(\Z[P])^{\binom{b+c}{i}}\cong\tilde{K}_j(\Z[\Gamma])
\]
and conclude that there is an isomorphism
\[
\bigoplus_{(P)\in\mc{P}}\bigoplus_{i=0}^{b+c}\mc{S}^{per,\langle j\rangle}_{n-i}(BP)^{\binom{b+c}{i}}\cong\mc{S}_n^{per,\langle j\rangle}(B\Gamma)
\]
rather than the isomorphism in Theorem \ref{thm: structures on BGamma  }.
The isomorphism in Theorem \ref{thm: structures on BGamma  } can be obtained from the proof of \cite[Theorem 1.13]{LuckRosenthal} after resolving the indexing issue.
\end{remark}

\begin{remark}
The isomorphism in Theorem \ref{thm: structures on BGamma  } can be rewritten as
\[
\mc{S}^{per,\langle j\rangle}_m(B\Gamma)\cong\bigoplus_{(P)\in\mc{P}}L^{\langle j\rangle}_m\left(\Z\left[N_\Gamma P\right]\right)/L^{\langle j\rangle}_m\left(\Z\left[W_\Gamma P\right]\right)\left[\frac{1}{p}\right].
\]
\end{remark}

\subsection{The Periodic Structure Set of $M$}

In order to compute the periodic structure sets $\mc{S}^{per,\langle j\rangle}_{n+\ell+1}(M)$, we follow \cite[\S 8]{DavisLuckTorusBundles}.
Namely, we study a map
\[
\sigma:\mc{S}^{per,\langle j\rangle}_{n+\ell+1}(M)\rightarrow H_n\left(T_{\rho}^n;L(\Z)\right)^{\Z/p}
\]
so that we get a well behaved injection
\[
\sigma\times\mc{S}^{per,\langle j\rangle}_{n+\ell+1}(f):\mc{S}^{per,\langle j\rangle}_{n+\ell+1}(M)\rightarrow H_n\left(T_{\rho}^n;L(\Z)\right)^{\Z/p}\times\mc{S}^{per,\langle j\rangle}_{n+\ell+1}(B\Gamma )
\]
where $f:M\rightarrow B\Gamma $ is the inclusion.

First, we record a consequence of Corollary \ref{cor: K-homology differentials vanish}.
\begin{prop}\label{prop: L-homology differentials vanish}
The differentials in the Atiyah-Hirzebruch-Serre spectral sequences
\begin{align*}
E^2_{i,j}(B\Gamma)=H_i\left(\Z/p;H_j\left(T_{\rho}^n;L(\Z)\right)\right)&\Rightarrow H_{i+j}(B\Gamma;L(\Z))\\
E^2_{i,j}(M)=H_i^{\Z/p}\left(S^{\ell};H_j\left(T_{\rho}^n;L(\Z)\right)\right)&\Rightarrow H_{i+j}(M;L(\Z))
\end{align*}
vanish.
\end{prop}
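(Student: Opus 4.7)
The plan is to follow the two-primary strategy used throughout the paper (as in \refthm{thm: K(BGamma  )}, \refcor{cor: K-homology differentials vanish}, and \refthm{thm: L^-infty computation}): reduce to checking the vanishing of differentials after inverting $2$ and after inverting $p$ separately. Since each $E^2$-term is finitely generated, it suffices to verify vanishing in both localizations.

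After inverting $p$, the higher group homology $H_i(\Z/p;A)[\tfrac{1}{p}]$ vanishes for $i\geq1$. For the $B\Gamma$ spectral sequence, only the $i=0$ column survives, so all differentials vanish for degree reasons. For the $M$ spectral sequence, the transfer for the free cover $S^\ell\to S^\ell/\Z/p$ identifies $H_i^{\Z/p}(S^\ell;A)[\tfrac{1}{p}]$ with $H_i(S^\ell;A)^{\Z/p}[\tfrac{1}{p}]$, so the $E^2$-page concentrates in columns $i=0$ and $i=\ell$. Differentials $d^r$ with $2\leq r<\ell$ vanish trivially, leaving only $d^\ell:E^\ell_{\ell,j}\to E^\ell_{0,j+\ell-1}$. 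I would dispatch this by comparison with the spectral sequence of the pullback fibration $T^n_\rho\to T^n_\rho\times S^\ell\to S^\ell$, which collapses because the total space is a product; after inverting $p$ the transfer produces a surjection from this collapsed sequence onto both relevant columns of the $M$ spectral sequence, forcing $d^\ell=0$.

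After inverting $2$, Sullivan's equivalence $\mathbf{L}(\Z)[\tfrac{1}{2}]\simeq\mathbf{KO}[\tfrac{1}{2}]$ (as $4$-periodic spectra) converts the $L$-homology spectral sequences into $KO$-homology spectral sequences. Since the composite $\mathbf{KO}\to\mathbf{K}\to\mathbf{KO}$ of complexification and realification is multiplication by $2$, after inverting $2$ the $KO$-homology AHSS is a retract of the $K$-homology AHSS. For $B\Gamma$, the vanishing then follows from \refcor{cor: K-homology differentials vanish}. For $M$ I would first establish the analogous vanishing of $K$-homology differentials, adapting the sketch of \refcor{cor: K-homology differentials vanish} by replacing $B\Z/p$ with $S^\ell/\Z/p$ in the base; the key differential analysis of the modules $\Lambda^* L$ is intrinsic to the fiber and carries over unchanged, and the extra $d^\ell$ in the $K$-homology case can be handled by the same pullback-to-$T^n_\rho\times S^\ell$ comparison used in the previous paragraph.

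The main obstacle I anticipate is precisely this extra differential $d^\ell$ in the $M$ spectral sequence, which has no counterpart in the $B\Gamma$ sequence and so cannot be dispatched by naturality along $M\to B\Gamma$. Both of the above arguments ultimately reduce it to the collapsed product spectral sequence, but carefully tracking the norm and transfer maps through these comparisons — in particular verifying that the retract structure is compatible with the column decompositions appearing after localization — is where the bookkeeping will be most delicate.
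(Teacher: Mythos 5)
Your treatment of the $B\Gamma$ spectral sequence matches the paper's: split into inverting $p$ (only the column $i=0$ survives) and localizing at $p$ (use $\mathbf{L}(\Z)[\tfrac{1}{2}]\simeq\mathbf{KO}[\tfrac{1}{2}]$, realize $KO_*(-)_{(p)}$ as a retract of $K_*(-)_{(p)}$, and cite Corollary \ref{cor: K-homology differentials vanish}). Where you diverge is the $M$ spectral sequence, and the divergence rests on a mistaken premise. You claim the differential $d^\ell$ out of column $\ell$ ``has no counterpart in the $B\Gamma$ sequence and so cannot be dispatched by naturality along $M\to B\Gamma$.'' It does have a counterpart: $B\Z/p$ has cells in every dimension, so $d^r\colon E^r_{\ell,j}(B\Gamma)\to E^r_{\ell-r,j+r-1}(B\Gamma)$ exists and vanishes by the first part. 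Since $f\colon M\to B\Gamma$ induces a map of spectral sequences that is bijective on $E^2_{i,j}$ for $i<\ell$ and surjective for $i=\ell$, and since every differential in $E^r(M)$ has target in a column $i-r<\ell$ where the comparison is injective, the commuting square forces every $d^r(M)$ to vanish (one does not need injectivity on the source, only on the target). This one-step naturality argument is exactly how the paper handles $M$, and it makes your entire second comparison apparatus unnecessary.

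Moreover, the route you propose instead has a genuine gap at the prime $p$. Your comparison with the product fibration $T^n_\rho\times S^\ell\to S^\ell$ kills $d^\ell(M)$ only because, after inverting $p$, the transfer makes $\pi_*\colon E^2_{\ell,j}(T^n_\rho\times S^\ell)\to E^2_{\ell,j}(M)$ surjective. In the $p$-local ($2$-inverted) step you assert that ``the same pullback-to-$T^n_\rho\times S^\ell$ comparison'' handles $d^\ell$, but there $\pi_*$ on the column $i=\ell$ is essentially the norm map $A\to A^{\Z/p}$ (for $\ell$ odd), whose cokernel is $\hat H^0(\Z/p;A)$ -- nontrivial $p$-torsion in general -- so surjectivity fails exactly where you need it. You would need a separate argument (e.g.\ an edge-homomorphism/norm argument at the base edge, or the naturality argument above) to dispose of $d^\ell$ $p$-locally. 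In short: the $B\Gamma$ half of your proof is correct and standard; for $M$, replace the product comparison and the proposed re-run of the fiber analysis over $S^\ell/\Z/p$ with the direct comparison along $f\colon M\to B\Gamma$, which closes the gap and shortens the proof.
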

\begin{proof}
The proof is similar to the proof of \cite[Lemma 8.3]{DavisLuckTorusBundles} so we give an outline.

We first show that the differentials in the first spectral sequence vanish.
It suffices to show that the differentials vanish after inverting $p$ and after localizing $p$.
After inverting $p$, the only nonzero terms are in the column $E^2_{0,j}$ so the differentials must vanish.

After localizing at $p$ and applying \cite[Theorem 4.2]{DavisLuckTorusBundles}, it suffices to show that the differentials for the homology theory $KO_*(-)_{(p)}$ vanish.
But multiplication by $2$ in the homology theory $KO_*$ factors through the map $K_*$.
This exhibits a $KO_*(B\Gamma)_{(p)}$ as a retract of $K_*(B\Gamma)_{(p)}$.
The result follows from Corollary \ref{cor: K-homology differentials vanish}, which asserts that the differentials in $K_*(B\Gamma)_{(p)}$ vanish.

To show that the differentials in the second spectral sequence vanishes, we consider the map $f:M\rightarrow B\Gamma$.
This induces a map from the second spectral sequence to the first which is bijective on terms $E^2_{i,j}$ for $i<\ell$ and surjective on terms $E^2_{\ell,j}$.
Since the terms $E^2_{i,j}(M)=0$ for $i>\ell$, this implies that the differentials vanish as desired.
\end{proof}

Let $F_{\ell,n}(-)$ and $E^r_{\ell,n}(-)$ denote the filtration terms and $E^r$ terms of the spectral sequences in Proposition \ref{prop: L-homology differentials vanish}.
Note that $F_{\ell,n}(M)=H_{n+\ell}(M;L(\Z))$ because the base space is $\ell$-dimensional.
Note also that there is always a quotient $pr:F_{\ell,n}\rightarrow E^{\infty}_{\ell,n}$.
Finally, Proposition \ref{prop: L-homology differentials vanish} implies that $E^{\infty}_{\ell,n}\cong E^{2}_{\ell,n}$ for both spectral sequences.
This explains the second, third, fourth and fifth rows of the following diagram.

\[
\begin{tikzpicture}[scale=2]
\node (A) at (0,7) {$\mc{S}^{per,\langle j\rangle}_{n+\ell+1}(M)$};\node (B) at (3,7) {$\mc{S}^{per,\langle j\rangle}_{n+\ell+1}(B\Gamma )$};
\node (C) at (0,6) {$H_{n+\ell}(M;L(\Z))$};\node (D) at (3,6) {$H_{n+\ell}(B\Gamma ;L(\Z))$};
\node (E) at (0,5) {$F_{\ell,n}(M)$};\node (F) at (3,5) {$F_{\ell,n}(B\Gamma )$};
\node (G) at (0,4) {$E_{\ell,n}^{\infty}(M)$};\node (H) at (3,4) {$E_{\ell,n}^{\infty}(B\Gamma )$};
\node (I) at (0,3) {$E_{\ell,n}^2(M)$};\node (J) at (3,3) {$E_{\ell,n}^2(B\Gamma )$};
\node (K) at (0,2) {$H_{\ell}^{\Z/p}\left(S^{\ell};H_n\left(T_{\rho}^n;L(\Z)\right)\right)$};\node (L) at (3,2) {$H_{\ell}^{\Z/p}(E\Z/p;H_n(T_{\rho};L(\Z))$};
\node (M) at (0,1) {$H_n\left(T_{\rho}^n;L(\Z)\right)^{\Z/p}$};\node (N) at (3,1) {$H_{\ell}\left(\Z/p;H_n\left(T_{\rho}^n;L(\Z)\right)\right)$};
\node (O) at (0,0) {$L_n\left(\Z\left[\Z^n_{\rho}\right]\right)^{\Z/p}$};\node (P) at (3,0) {$H_{\ell}\left(\Z/p;H_n\left(T_{\rho}^n;L(\Z)\right)\right)$};
\path[->] (A) edge node[above]{$\mc{S}^{per,s}_{n+\ell+1}(f)$} (B) (A) edge node[right]{$\eta_{n+\ell+1}(M)$} (C) (B) edge node[right]{$\eta_{n+\ell+1}(B\Gamma )$} (D)
(C) edge node[above]{$H_{n+\ell}(f;L(\Z))$} (D)
(E) edge node[right]{$\op{inc}$} node[left]{$\cong$} (C) (F) edge node[right]{$\op{inc}$}(D) (E) edge node[above]{$F_{\ell,n}(f)$} (F) (E) edge node[right]{$\op{pr}$} (G) (F) edge node[right]{$\op{pr}$} (H)
(G) edge node[above]{$E_{\ell,n}^{\infty}(f)$} (H) (G) edge node[right]{$\op{id}$} node[left]{$\cong$} (I) (H) edge node[right]{$\op{id}$} node[left]{$\cong$} (J)
(I) edge node[above]{$E_{\ell,n}^2(f)$} (J) (I) edge node[right]{$\op{id}$} node[left]{$\cong$} (K) (J) edge node[right]{$\op{id}$} node[left]{$\cong$} (L)
(K) edge node[above]{$g_{n+\ell}$} (L) (K) edge node[right]{$\op{id}$} node[left]{$\cong$} (M) (L) edge node[right]{$\op{id}$} node[left]{$\cong$} (N)
(M) edge node[above]{$g_{n+\ell}$} (N) (M) edge node[right]{$A_n\left(T_{\rho}^n\right)^{\Z/p}$} node[left]{$\cong$} (O) (N) edge node[right]{$\op{id}$} node[left]{$\cong$} (P)
(O) edge node[above]{$g_{n+\ell}\circ A_n(T_{\rho})^{\Z/p}$} (P);
\end{tikzpicture}
\]

The maps $\eta_{n+\ell+1}$ are from the surgery exact sequence and $A_n\left(T_{\rho}^n\right)^{\Z/p}$ is the assembly map.
We define $\mu$ to be the composite of the left vertical maps and we define $\sigma$ to be the composite of $\mu$ with the isomorphism $L_n\left(\Z\left[\Z^n_{\rho}\right]\right)^{\Z/p}\cong H_n(T_{\rho}^n;L(\Z))^{\Z/p}$.
Our goal now is to show the following.

\begin{lemma}\label{lem: first map iso on kernel}
The map $\eta_{n+\ell+1}:\mc{S}^{per,\langle j\rangle}_{n+\ell+1}(M)\rightarrow H_{n+\ell}(M;L(\Z))$ induces an isomorphism
\[
\ker\left(\mc{S}^{per,\langle j\rangle}_{n+\ell+1}(f)\right)\rightarrow\ker\left(H_{n+\ell}(f;L(\Z))\right).
\]
\end{lemma}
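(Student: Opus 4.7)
I would work with the commutative ladder of Ranicki surgery exact sequences for $M$ and $B\Gamma$, in which the induced maps on $L^{\langle j\rangle}_*(\Z[\Gamma])$ are identities because $\pi_1(f)$ is an isomorphism. The relevant portions are
\[
L^{\langle j\rangle}_{n+\ell+1}(\Z[\Gamma]) \xrightarrow{\xi_M} \mc{S}^{per,\langle j\rangle}_{n+\ell+1}(M) \xrightarrow{\eta_M} H_{n+\ell}(M;L(\Z)) \xrightarrow{A_M} L^{\langle j\rangle}_{n+\ell}(\Z[\Gamma])
\]
and its analog for $B\Gamma$, joined by the vertical maps $\mc{S}^{per,\langle j\rangle}_{n+\ell+1}(f)$ and $H_{n+\ell}(f;L(\Z))$, which I abbreviate $\mc{S}(f)$ and $H(f)$. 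Commutativity of the square involving the $\eta$'s gives $\eta_M(\ker \mc{S}(f)) \subseteq \ker H(f)$, so the restricted map is well-defined.

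\textbf{Surjectivity.} For $y \in \ker H(f)$, naturality gives $A_M(y) = A_{B\Gamma}(H(f)(y)) = 0$, so $y = \eta_M(x)$ for some $x$ by exactness of the top row. Then $\eta_{B\Gamma}(\mc{S}(f)(x)) = H(f)(y) = 0$ implies $\mc{S}(f)(x) = \xi_{B\Gamma}(z)$ for some $z$. Since $\mc{S}(f)\xi_M = \xi_{B\Gamma}$ and $\eta_M \xi_M = 0$, the element $x - \xi_M(z)$ lies in $\ker \mc{S}(f)$ and still satisfies $\eta_M(x - \xi_M(z)) = y$.

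\textbf{Injectivity.} For $x \in \ker \mc{S}(f) \cap \ker \eta_M$, exactness gives $x = \xi_M(z)$, and $0 = \mc{S}(f)(\xi_M(z)) = \xi_{B\Gamma}(z)$ forces $z \in \op{im} A_{B\Gamma}$. Proving $x = 0$ amounts to $z \in \op{im} A_M$; since $A_M = A_{B\Gamma} \circ H(f)$ by naturality, this is equivalent to $\op{im} A_M = \op{im} A_{B\Gamma}$ in $L^{\langle j\rangle}_{n+\ell+1}(\Z[\Gamma])$, or equivalently $\ker \xi_M = \ker \xi_{B\Gamma}$. This equality is the main obstacle.

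\textbf{Identifying the kernels.} To establish $\ker \xi_M = \ker \xi_{B\Gamma}$, I would combine \refthm{thm: L^j computation} with \refthm{thm: structures on BGamma  }. Under the decomposition $L^{\langle j\rangle}_{n+\ell+1}(\Z[\Gamma]) \cong H_{n+\ell+1}(T^n_\rho;L(\Z))^{\Z/p} \oplus \bigoplus_{(P)} L^{\langle j\rangle}_{n+\ell+1}(\Z[N_\Gamma P])/L^{\langle j\rangle}_{n+\ell+1}(\Z[W_\Gamma P])$, \refthm{thm: structures on BGamma  } realizes $\xi_{B\Gamma}$ as projection onto the second summand followed by inverting $p$, so $\ker \xi_{B\Gamma}$ consists of the first summand together with the $p$-torsion of the second. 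I would derive the analogous description of $\ker \xi_M$ by working away from $p$ (using the split short exact sequence \refeq{eq: L split with deco} and naturality of the assembly map along $M \to B\Gamma \to \underline{B}\Gamma$, together with the fact that $A_M = A_{B\Gamma}\circ H(f)$ hits the entire image of $A_{B\Gamma}$ after inverting $p$) and away from $2$ (invoking the topological $K$-theory computations of \refsec{section: K theory} to match the decompositions), and concluding via naturality that the two kernels coincide.
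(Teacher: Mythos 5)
Your ladder setup and the diagram chase for surjectivity are both correct, and so is your reduction of injectivity: for $x\in\ker\mc{S}(f)\cap\ker\eta_M$ one has $x=\xi_M(z)$ with $z\in\op{im}A_{B\Gamma}$, and $x=0$ iff $z\in\op{im}A_M$, so injectivity amounts to $\op{im}A_{B\Gamma}\subseteq\op{im}A_M$. The gap is in ``Identifying the kernels.'' You assert that Theorem \ref{thm: structures on BGamma  } realizes $\xi_{B\Gamma}$ as projection onto the second summand of the decomposition of $L^{\langle j\rangle}_{n+\ell+1}(\Z[\Gamma])$ from Theorem \ref{thm: L^j computation}, but neither theorem says this: Theorem \ref{thm: structures on BGamma  } only gives an abstract isomorphism of $\mc{S}^{per,\langle j\rangle}_m(B\Gamma)$ with a direct sum, with no claim about compatibility with $\xi_{B\Gamma}$ or with the $L$-theoretic splitting. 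The remark following Theorem \ref{thm: L^j computation} even warns explicitly that the decomposition is \emph{not} induced by natural maps --- the inclusion $N_\Gamma P\to\Gamma$ hits only a $p$-power-index subgroup of its summand. So $\ker\xi_{B\Gamma}$ cannot be read off from the decomposition in the way you claim, nor compared with $\ker\xi_M$, and ``concluding via naturality'' cannot be carried out as written.

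The paper instead sidesteps the decomposition theorems entirely at this point. Since $A_M=A_{B\Gamma}\circ H(f)$, the inclusion $\op{im}A_{B\Gamma}\subseteq\op{im}A_M$ follows once one shows $H_{n+\ell+1}(B\Gamma;L(\Z))=\op{im}H(f)+\ker A_{B\Gamma}$. After inverting $p$, $H(f)$ is outright surjective; the paper proves this by passing to the covers $T^n_\rho\times S^\ell\to M$ and $T^n_\rho\times S^\infty\to B\Gamma$, where the map on $L(\Z)$-homology upstairs is split surjective and the map downstairs factors through an isomorphism of $\Z/p$-coinvariants. After localizing at $p$, one replaces $L(\Z)$-homology by $KO$-homology and invokes Corollary \ref{cor: KO-homology of BGamma  }, which splits $KO_{n+\ell+1}(B\Gamma)_{(p)}$ into a free part hit by the fiber inclusion $T^n_\rho\to B\Gamma$ (hence by $H(f)$) and a $p$-torsion part; the $p$-torsion lies in $\ker A_{B\Gamma}$ since $L^{\langle j\rangle}_{n+\ell+1}(\Z[\Gamma])$ has no $p$-torsion. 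You gesture at the topological $K$-theory input but never actually deploy it --- this is the concrete computation that closes the injectivity argument, and it is both simpler than and logically independent of the decomposition theorems you tried to use.
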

\begin{proof}
Consider the following commutative diagram.
\[
\begin{tikzpicture}[scale = 2]
\node (A) at (0,4) {$H_{n+\ell+1}(M;L(\Z))$};\node (B) at (3,4) {$H_{n+\ell+1}(B\Gamma ;L(\Z))$};
\node (C) at (0,3) {$L^{\langle j\rangle}_{n+\ell+1}(\Z[\Gamma])$};\node (D) at (3,3) {$L^{\langle j\rangle}_{n+\ell+1}(\Z[\Gamma])$};
\node (E) at (0,2) {$\mc{S}^{per,{\langle j\rangle}}_{n+\ell+1}(M)$};\node (F) at (3,2) {$\mc{S}^{per,\langle j\rangle}_{n+\ell+1}(B\Gamma )$};
\node (G) at (0,1) {$H_{n+\ell}(M;L(\Z))$};\node (H) at (3,1) {$H_{n+\ell}(B\Gamma ;L(\Z))$};
\node (I) at (0,0) {$L_{n+\ell}^{\langle j\rangle}(\Z[\Gamma])$};\node (J) at (3,0) {$L^{\langle j\rangle}_{n+\ell}(\Z[\Gamma])$};
\path[->] (A) edge (B) (A) edge (C) (B) edge (D) (C) edge node[above]{$\op{id}$} (D) (C) edge (E) (D) edge (F) (E) edge node[above]{$\mc{S}_{n+\ell+1}^{per,\langle j\rangle}(f)$} (F) (E) edge node[right]{$\eta_{n+\ell+1}(M)$} (G) (F) edge node[right]{$\eta_{n+\ell+1}(B\Gamma)$} (H) (G) edge node[above]{$H_{n+\ell}(f;L(\Z))$} (H) (G) edge (I) (H) edge (J) (I) edge node[above]{$\op{id}$} (J);
\end{tikzpicture}
\]
Surjectivity of the map follows from the bottom three rows.

Now, we show injectivity.
Suppose $\alpha\in\ker\left(\mc{S}_{n+\ell+1}^{per,\langle j\rangle}(f)\right)\cap\ker\left(\eta_{n+\ell+1}(M)\right)$.
It suffices to show that $\alpha=0$ after localizing at $p$ and after inverting $p$.

After localizing at $p$, the top map becomes
\[
KO_{n+\ell+1}(M)_{(p)}\rightarrow KO_{n+\ell+1}(B\Gamma )_{(p)}.
\]
Some diagram chasing shows that $\alpha$ pulls back to an element of $L^{\langle j\rangle}_{n+\ell+1}(\Z[\Gamma])$ which then pulls back to an element of $\beta\in KO_{n+\ell+1}(B\Gamma )_{(p)}$.
It suffices to show that $\beta$ is in the image of an element of $KO_{n+\ell+1}(M)_{(p)}$.
Since $L_{n+\ell+1}^{\langle j\rangle}(\Z[\Gamma])$ is $p$-torsion free, it follows that $\beta$ does not have $p$-power order.
Now, consider the diagram
\[
\begin{tikzpicture}[scale =2]
\node (A) at (0,1) {$KO_{n+\ell+1}\left(T_\rho^n\times S^{\ell}\right)_{(p)}$};\node (B) at (2,1) {$KO_{n+\ell+1}(M)_{(p)}$};
\node (C) at (0,0) {$KO_{n+\ell+1}\left(T_\rho^n\right)_{(p)}$};\node (D) at (2,0) {$KO_{n+\ell+1}(B\Gamma )_{(p)}$};
\path[->] (A) edge (B) (A) edge (C) (B) edge (D) (C) edge (D);
\end{tikzpicture}.
\]
The left vertical map is a surjection as it has a section.
The bottom map surjects onto the $p$-torsion free part of $KO_{n+\ell+1}(B\Gamma )_{(p)}$ by Corollary \ref{cor: KO-homology of BGamma }.
Thus, the right vertical map surjects onto the $p$-torsion free part of $KO_{n+\ell+1}(B\Gamma )_{(p)}$.
This gives the desired result.

After inverting $p$, we pull $\alpha$ back to an element $\beta\in H_{n+\ell+1}(B\Gamma;L(\Z))\left[\frac{1}{p}\right]$.
It suffices to prove that
\[
H_{n+\ell+1}\left(M;L(\Z)\right)\left[\frac{1}{p}\right]\rightarrow H_{n+\ell+1}\left(B\Gamma;L(\Z)\right)\left[\frac{1}{p}\right]
\]
is surjective.
Consider the following diagram.
\[
\begin{tikzpicture}[scale=2]
\node (A) at (0,1) {$H_{n+\ell+1}\left(T_\rho^n\times S^{\ell};L(\Z)\right)\left[\frac{1}{p}\right]$};\node (B) at (3,1) {$H_{n+\ell+1}\left(M;L(\Z)\right)\left[\frac{1}{p}\right]$};
\node (C) at (0,0) {$H_{n+\ell+1}\left(T_\rho^n\times S^{\infty};L(\Z)\right)\left[\frac{1}{p}\right]$};\node (D) at (3,0) {$H_{n+\ell+1}\left(B\Gamma ;L(\Z)\right)\left[\frac{1}{p}\right]$};
\path[->] (A) edge (B) (A) edge (C) (B) edge (D) (C) edge (D);
\end{tikzpicture}
\]
Again, the left vertical map is surjective since it has a section.
It remains to show that the bottom map is surjective.
For this, note that the map factors through the isomorphism
\[
\left(H_{n+\ell+1}\left(T_\rho^n\times S^{\infty};L(\Z)\right)\left[\frac{1}{p}\right]\right)_{\Z/p}\rightarrow H_{n+\ell+1}(B\Gamma ;L(\Z))\left[\frac{1}{p}\right].
\]
Therefore, the right vertical map is surjective, which completes the proof.
\end{proof}

\begin{lemma}\label{lem: second map iso on kernel}
The composite 
\[
H_{n+\ell}(M;L(\Z))\rightarrow F_{\ell,n}(M)\rightarrow E_{\ell,n}^{\infty}(M)
\]
induces an isomorphism
\[
\ker\left(H_{n+\ell}(f;L(\Z))\right)\rightarrow\ker\left(E^{\infty}_{\ell,n}(f)\right).
\]
\end{lemma}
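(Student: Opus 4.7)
The plan is to establish the stronger claim that $F_{s,t}(f)\colon F_{s,t}(M) \to F_{s,t}(B\Gamma)$ is an isomorphism whenever $s < \ell$, and then deduce the lemma by the snake lemma applied at filtration level $\ell$.

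First I would observe that $f\colon M \to B\Gamma$ is a map of fibrations over $B\Z/p$ with common fiber $T^n_\rho$, and on base spaces $f$ restricts to the classifying map $S^\ell/\Z/p \to B\Z/p$. Since the latter is an $\ell$-equivalence, the induced map of $E^2$-pages
\[
E^2_{s,t}(f)\colon H_s^{\Z/p}\left(S^\ell;H_t\left(T_\rho^n;L(\Z)\right)\right) \to H_s\left(\Z/p;H_t\left(T_\rho^n;L(\Z)\right)\right)
\]
is an isomorphism for $s < \ell$. Because \refprop{prop: L-homology differentials vanish} forces both spectral sequences to collapse at $E^2$, we get that $E^\infty_{s,t}(f)$ is an isomorphism in the same range.

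Then I would induct on $s$, with base case $s=-1$ where $F_{-1,*}=0$ trivially, to prove $F_{s,t}(f)$ is an isomorphism for all $s < \ell$. The inductive step applies the five-lemma to the two short exact sequences
\[
0 \to F_{s-1,t+1} \to F_{s,t} \to E^\infty_{s,t} \to 0
\]
for $M$ and for $B\Gamma$, using the inductive hypothesis on the left and the $E^\infty$ observation above on the right. In particular, $F_{\ell-1,n+1}(f)$ will be an isomorphism.

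Finally, I would apply the snake lemma to the pair of short exact sequences $0 \to F_{\ell-1,n+1} \to F_{\ell,n} \to E^\infty_{\ell,n} \to 0$ for $M$ and $B\Gamma$. Since the leftmost vertical map is an isomorphism, both its kernel and cokernel vanish, and the snake lemma yields $\ker F_{\ell,n}(f) \xrightarrow{\cong} \ker E^\infty_{\ell,n}(f)$. Because $F_{\ell,n}(M) = H_{n+\ell}(M;L(\Z))$ (the base of $M$ is $\ell$-dimensional) and $F_{\ell,n}(B\Gamma) \hookrightarrow H_{n+\ell}(B\Gamma;L(\Z))$ is injective, the left-hand kernel coincides with $\ker H_{n+\ell}(f;L(\Z))$, which is exactly what the lemma asserts. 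The main obstacle is really just identifying the range in which $E^\infty_{s,t}(f)$ is an isomorphism; once the degeneration from \refprop{prop: L-homology differentials vanish} and the $\ell$-connectivity of $S^\ell/\Z/p \to B\Z/p$ are combined, everything else is a diagram chase.
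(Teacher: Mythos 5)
Your argument is correct and is exactly the standard filtration-comparison argument that the paper invokes by deferring to the proof of Lemma 8.5 of Davis--L\"uck: one uses that $E^2_{s,t}(f)$ is bijective for $s<\ell$ (the paper records this in the proof of Proposition \ref{prop: L-homology differentials vanish}), the collapse of both spectral sequences, induction with the five lemma to get $F_{\ell-1,n+1}(f)$ an isomorphism, and the snake lemma on the top filtration quotient. No gaps.
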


\begin{proof}
The proof is the same as the proof of \cite[Lemma 8.5]{DavisLuckTorusBundles}.
\end{proof}

From Lemma \ref{lem: first map iso on kernel} and Lemma \ref{lem: second map iso on kernel}, we conclude that $\sigma\times\mc{S}^{per,\langle j\rangle}_{n+\ell+1}(f)$ is injective.

\begin{prop}\label{prop: product map is injective}
The map
\[
\sigma\times\mc{S}^{per,\langle j\rangle}_{n+\ell+1}(f):\mc{S}^{per,\langle j\rangle}_{n+\ell+1}(M)\rightarrow H_n\left(T^n_{\rho};L(\Z)\right)^{\Z/p}\times\mc{S}^{per,\langle j\rangle}_{n+\ell+1}(B\Gamma)
\]
is injective.
\end{prop}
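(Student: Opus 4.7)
The plan is simply to chain together Lemma \ref{lem: first map iso on kernel} and Lemma \ref{lem: second map iso on kernel} via a diagram chase in the big diagram defining $\sigma$. Concretely, I will verify that $\sigma$ factors as $\eta_{n+\ell+1}(M)$ followed by maps that, by inspection of the diagram, are isomorphisms on the appropriate kernels. Then an element of $\ker\bigl(\sigma\times\mc{S}^{per,\langle j\rangle}_{n+\ell+1}(f)\bigr)$ will be successively pushed through the two lemmas until it must vanish.

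Step 1. Observe from the diagram that the chain
\[
E^{\infty}_{\ell,n}(M)\xrightarrow{\cong}E^2_{\ell,n}(M)\xrightarrow{\cong}H^{\Z/p}_{\ell}\bigl(S^{\ell};H_n(T^n_\rho;L(\Z))\bigr)\xrightarrow{\cong}H_n(T^n_\rho;L(\Z))^{\Z/p}
\]
consists of isomorphisms (the first because the spectral sequence differentials vanish by Proposition \ref{prop: L-homology differentials vanish}, the rest by the standard identification of $E^2_{\ell,n}$ of the Atiyah–Hirzebruch–Serre spectral sequence for the fibration $T^n_\rho\to M\to S^\ell/(\Z/p)$). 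Also $\op{inc}\colon F_{\ell,n}(M)\to H_{n+\ell}(M;L(\Z))$ is an isomorphism, since $M$ has an $\ell$-dimensional base and the spectral sequence collapses. Thus $\sigma$ equals $\op{pr}\circ\op{inc}^{-1}\circ\eta_{n+\ell+1}(M)$ composed with the isomorphism at the bottom.

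Step 2. Take $\alpha\in\ker\bigl(\sigma\times\mc{S}^{per,\langle j\rangle}_{n+\ell+1}(f)\bigr)$. Since $\mc{S}^{per,\langle j\rangle}_{n+\ell+1}(f)(\alpha)=0$, Lemma \ref{lem: first map iso on kernel} tells us that $\eta_{n+\ell+1}(M)(\alpha)$ lies in $\ker\bigl(H_{n+\ell}(f;L(\Z))\bigr)$, and moreover that $\eta_{n+\ell+1}(M)$ is injective on $\ker\bigl(\mc{S}^{per,\langle j\rangle}_{n+\ell+1}(f)\bigr)$. Now apply Lemma \ref{lem: second map iso on kernel}: the composite $\op{pr}\circ\op{inc}^{-1}$ is injective on $\ker\bigl(H_{n+\ell}(f;L(\Z))\bigr)$. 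But the image of $\eta_{n+\ell+1}(M)(\alpha)$ under $\op{pr}\circ\op{inc}^{-1}$, composed with the isomorphisms of Step 1, is precisely $\sigma(\alpha)=0$. Hence $\eta_{n+\ell+1}(M)(\alpha)=0$, and then $\alpha=0$ by the injectivity statement of Lemma \ref{lem: first map iso on kernel}.

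There is really no main obstacle here: the two preceding lemmas do all the work, and once one checks that $\sigma$ really does factor through $\eta_{n+\ell+1}(M)$ in the way required, the argument is immediate. The only thing to be careful about is that the two lemmas apply to possibly different subgroups (one inside $\mc{S}^{per,\langle j\rangle}_{n+\ell+1}(M)$, the other inside $H_{n+\ell}(M;L(\Z))$), so the chase must be set up so that the isomorphism from the first lemma lands inside the domain of the isomorphism from the second lemma; commutativity of the diagram arranges exactly this.
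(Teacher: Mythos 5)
Your argument is correct and is exactly the paper's: the paper simply states that Proposition \ref{prop: product map is injective} follows from Lemma \ref{lem: first map iso on kernel} and Lemma \ref{lem: second map iso on kernel}, and your diagram chase (using that $\sigma$ factors as $\op{pr}\circ\op{inc}^{-1}\circ\eta_{n+\ell+1}(M)$ followed by isomorphisms, with commutativity guaranteeing that $\eta_{n+\ell+1}(M)(\alpha)$ lands in $\ker(H_{n+\ell}(f;L(\Z)))$) is the intended way to combine them.
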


\begin{prop}\label{prop: structure set cokernel p-torsion}
The cokernel of $\sigma$ is a finite $p$-group.
\end{prop}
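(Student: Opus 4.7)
The plan is to show that $\sigma$ becomes surjective after inverting $p$, and to observe that its target $H_n(T_{\rho}^n;L(\Z))^{\Z/p}$ is finitely generated; together these imply that $\op{coker}(\sigma)$ is a finitely generated $p$-torsion abelian group, hence finite. The main substantive work is the $p$-local analysis of the two Atiyah--Hirzebruch--Serre spectral sequences from \refprop{prop: L-homology differentials vanish}, and this is the part I expect to require the most care.

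After inverting $p$, the homology of the finite group $\Z/p$ with any coefficients vanishes in positive degrees. The spectral sequence computing $H_*(B\Gamma;L(\Z))[1/p]$ therefore collapses onto the column $i=0$, giving $H_{n+\ell}(B\Gamma;L(\Z))[1/p] \cong H_{n+\ell}(T_{\rho}^n;L(\Z))^{\Z/p}[1/p]$ together with $E^{\infty}_{\ell,n}(B\Gamma)[1/p] = 0$. For the spectral sequence computing $H_*(M;L(\Z))[1/p]$, the only nonzero $E^{\infty}$-entries lie in the columns $i=0$ and $i=\ell$, and the identifications appearing in the large diagram above give $E^{\infty}_{\ell,n}(M)[1/p] \cong H_n(T_{\rho}^n;L(\Z))^{\Z/p}[1/p]$. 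The morphism of spectral sequences induced by $f:M\to B\Gamma$ is an isomorphism on the $i=0$ columns and is zero on the $i=\ell$ columns, so the projection $H_{n+\ell}(M;L(\Z))[1/p] \to E^{\infty}_{\ell,n}(M)[1/p]$ restricts to a surjection on $\ker\bigl(H_{n+\ell}(f;L(\Z))[1/p]\bigr)$.

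The two kernel lemmas finish the surjectivity argument. Applying \reflem{lem: first map iso on kernel} with $p$ inverted, $\eta_{n+\ell+1}(M)[1/p]$ restricts to an isomorphism $\ker\bigl(\mc{S}^{per,\langle j\rangle}_{n+\ell+1}(f)[1/p]\bigr) \xrightarrow{\cong} \ker\bigl(H_{n+\ell}(f;L(\Z))[1/p]\bigr)$, while \reflem{lem: second map iso on kernel} provides an isomorphism $\ker\bigl(H_{n+\ell}(f;L(\Z))[1/p]\bigr) \xrightarrow{\cong} \ker\bigl(E^{\infty}_{\ell,n}(f)[1/p]\bigr)$, which in turn equals $E^{\infty}_{\ell,n}(M)[1/p]$ since the corresponding $B\Gamma$-term vanishes $p$-locally. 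Composing with the identification $E^{\infty}_{\ell,n}(M)[1/p] \cong H_n(T_{\rho}^n;L(\Z))^{\Z/p}[1/p]$ from the diagram exhibits $\sigma[1/p]$ as surjective on the subgroup $\ker\bigl(\mc{S}^{per,\langle j\rangle}_{n+\ell+1}(f)[1/p]\bigr)$, and hence $\sigma[1/p]$ is surjective.

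Finally, $H_n(T_{\rho}^n;L(\Z))$ is finitely generated, since $T_{\rho}^n$ is a finite CW-complex and each homotopy group $\pi_j\mathbf{L}(\Z)$ is finitely generated; the fixed submodule $H_n(T_{\rho}^n;L(\Z))^{\Z/p}$ and its quotient $\op{coker}(\sigma)$ are therefore finitely generated as well, and a finitely generated $p$-torsion abelian group is finite.
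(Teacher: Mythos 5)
Your proof is correct, and it takes a genuinely different route from the paper's. The paper establishes $p$-local surjectivity of $\mu$ via a diagram chase that uses the splitting $\beta^{-1}$ of $\beta^{\langle j\rangle}_{n+\ell}$ coming from Equation~\eqref{eq: L split with deco} in the $L$-theory computation: one observes that $A_{n+\ell}(M)$ factors through $H_{n+\ell}(B\Gamma;L(\Z))$, hence through $\operatorname{im}(\beta^{-1})$ after inverting $p$, and chases around the column $0\to F_{\ell-1,n+1}(M)\to H_{n+\ell}(M;L(\Z))\to E^\infty_{\ell,n}(M)\to 0$. You instead reuse Lemmas~\ref{lem: first map iso on kernel} and~\ref{lem: second map iso on kernel} — which the paper deploys only for the injectivity statement of Proposition~\ref{prop: product map is injective} — together with the single additional observation that $E^\infty_{\ell,n}(B\Gamma)\left[\frac{1}{p}\right] \cong H_\ell\!\left(\Z/p;H_n(T_\rho^n;L(\Z))\right)\left[\frac{1}{p}\right]=0$ (because $\ell\ge 1$ and group homology of $\Z/p$ is $p$-torsion in positive degrees). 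This makes $\ker\bigl(E^\infty_{\ell,n}(f)\bigr)\left[\frac{1}{p}\right]$ equal to all of $E^\infty_{\ell,n}(M)\left[\frac{1}{p}\right]$, and chaining the two kernel isomorphisms exhibits $\sigma\left[\frac{1}{p}\right]$ as an isomorphism already when restricted to $\ker\bigl(\mc{S}^{per,\langle j\rangle}_{n+\ell+1}(f)\bigr)\left[\frac{1}{p}\right]$ — a slightly stronger conclusion that also does most of the work for Proposition~\ref{prop: per structure set inverting p}. Your route avoids invoking $\beta^{-1}$ and the $L$-theoretic splitting entirely, at the cost of relying on the kernel lemmas and on noting $E^\infty_{\ell,n}(B\Gamma)\left[\frac{1}{p}\right]=0$; the paper's route is more hands-on but independent of those lemmas. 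One small caveat: the identification $E^\infty_{\ell,n}(M)\cong H_n(T_\rho^n;L(\Z))^{\Z/p}$ holds integrally (it is the Poincaré-duality identification already displayed in the large diagram), so you do not need to localize to obtain it; this does not affect the validity of the argument.
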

\begin{proof}
As in the proof of \cite[Theorem 8.1(2)]{DavisLuckTorusBundles} we consider the following diagram.
\[
\begin{tikzpicture}[scale=2]
\node (A) at (0,2) {$F_{\ell-1,n+1}(M)$};\node (B) at (2,2) {$F_{\ell-1,n+1}(B\Gamma )$};
\node (C) at (0,1) {$H_{n+\ell}(M;L(\Z))$};\node (D) at (2,1) {$H_{n+\ell}(B\Gamma ;L(\Z))$};
\node (E) at (0,0) {$L_{n+\ell}^{\langle j\rangle}(\Z[\Gamma])$};\node (F) at (2,0) {$H_{n+\ell}(\underline{B}\Gamma ;L(\Z))$};
\path[->] (A) edge node[above]{$\cong$} (B) (A) edge (C) (B) edge node[right]{$\cong_{\frac{1}{p}}$} (D) (C) edge (D) (C) edge node[left]{$A_{n+\ell}(M)$} (E) (D) edge node[right]{$\cong_{\frac{1}{p}}$} (F) (E) edge node[above]{$\beta$} (F) (D) edge node[above left]{$A_{n+\ell}(B\Gamma)$} (E);
\end{tikzpicture}
\]
Unlike \cite{DavisLuckTorusBundles}, the bottom map is only an surjection which splits after inverting $p$.

After inverting $p$, we have the following diagram (where inverting $p$ is omitted from the notation).
\begin{equation}\label{diag: structure set cokernel p-torsion}
\begin{tikzpicture}[scale=1.5]
\node (A) at (3,4) {$0$};\node (B) at (6,4) {$0$};
\node (C) at (3,3) {$F_{\ell-1,n+1}(M)$};\node (D) at (6,3) {$H_{n+\ell}(\underline{B}\Gamma ;L(\Z))$};
\node (E) at (0,2) {$\mc{S}^{per,\langle j\rangle}_{n+\ell+1}(M)$};\node (F) at (3,2) {$H_{n+\ell}(M;L(\Z))$};\node (G) at (6,2) {$L_{n+\ell}^{\langle j\rangle}(\Z[\Gamma])$};
\node (H) at (3,1) {$L_n\left(\Z\left[\Z^n_{\rho}\right]\right)^{\Z/p}$};
\node (I) at (3,0) {$0$};
\path[->] (A) edge (C) (B) edge (D) (C) edge node[above]{$\cong_{\frac{1}{p}}$} (D) (C) edge (F) (E) edge node[above]{$\eta_{n+\ell+1}$} (F) (E) edge node[below left]{$\mu$} (H) (F) edge node[above]{$A_{n+\ell}(M)$} (G) (D) edge node[right]{$\gamma$} (G) (F) edge node[right]{$A_n\left(T_{\rho}\right)^{\Z/p}\circ pr$} (H) (H) edge (I);
\end{tikzpicture}
\end{equation}
The image of $A_{n+\ell}(M)$ is contained in the image of $\gamma$ since the assembly map factors through $H_{n+\ell}(M;L(\Z))\rightarrow H_{n+\ell}(B\Gamma ;L(\Z))$.
Now, diagram chasing gives that $\mu\left[\frac{1}{p}\right]$ is surjective.
This implies that $\sigma\left[\frac{1}{p}\right]$ is surjective, which completes the proof.
\end{proof}

\begin{prop}\label{prop: structures come from normalizers}
Let $v$ be the composite
\[
v:\bigoplus_{(P)\in\mc{P}}L^{\langle j\rangle}_{n+\ell+1}\left(\Z\left[N_\Gamma P\right]\right)/L^{\langle j\rangle}_{n+\ell+1}\left(\Z\left[W_\Gamma P\right]\right)\rightarrow \tilde{L}_{n+\ell+1}^{\langle j\rangle}(\Z[\Gamma])\xrightarrow{\xi_{n+\ell+1}(M)} \mc{S}^{per,\langle j\rangle}_{n+\ell+1}(M)
\]
where the first map comes from Equation (\ref{eq: L split with deco}) and the second map comes from the surgery exact sequence.
Then, $v$ is injective, $\op{im}(v)\subseteq\ker(\sigma)$ and $\ker(\sigma)/\op{im}(v)$ is a finite abelian $p$-group.
\end{prop}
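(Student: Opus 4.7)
The plan is to verify the three claims in order. The inclusion $\op{im}(v)\subseteq\ker(\sigma)$ is immediate from exactness of the algebraic surgery sequence: by construction $\sigma$ factors through $\eta_{n+\ell+1}(M)$ and $v$ factors through $\xi_{n+\ell+1}(M)$, so the composite $\sigma\circ v$ vanishes because $\eta_{n+\ell+1}(M)\circ\xi_{n+\ell+1}(M)=0$.

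For the injectivity of $v$, suppose $v(x)=0$, so $\iota(x)\in\ker\xi_{n+\ell+1}(M)=\op{im}\bigl(A_{n+\ell+1}(M)\bigr)$. The assembly map $A_{n+\ell+1}(M)$ factors through $A_{n+\ell+1}(B\Gamma)\colon H_{n+\ell+1}(B\Gamma;L(\Z))\to L^{\langle j\rangle}_{n+\ell+1}(\Z[\Gamma])$ via $f$. The proof of Theorem \ref{thm: L^j computation} (through Proposition \ref{prop: LES from pushout diagram}) produces, after inverting $p$, a direct sum decomposition
\[
L^{\langle j\rangle}_{n+\ell+1}(\Z[\Gamma])\left[\tfrac{1}{p}\right]\cong\op{im}(\iota)\left[\tfrac{1}{p}\right]\oplus\op{im}\bigl(A_{n+\ell+1}(B\Gamma)\bigr)\left[\tfrac{1}{p}\right],
\]
so $\iota(x)[\tfrac{1}{p}]$, lying in both summands, must vanish. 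Since $\iota$ is split injective after inverting $p$, this forces $x[\tfrac{1}{p}]=0$. Shaneson splitting combined with Theorem \ref{thm: L groups of Z/p} presents the domain of $\iota$ as a direct sum of groups of the form $\tilde{L}^{\langle j-i\rangle}_{n+\ell+1-i}(\Z[\Z/p])$, which are $p$-torsion free, and therefore $x=0$.

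To show $\ker(\sigma)/\op{im}(v)$ is a finite $p$-group, I will embed $\ker(\sigma)$ into $\mc{S}^{per,\langle j\rangle}_{n+\ell+1}(B\Gamma)$ and identify the image of $v$ there. Proposition \ref{prop: product map is injective} makes $\sigma\times\mc{S}^{per,\langle j\rangle}_{n+\ell+1}(f)$ injective, so its restriction to $\ker(\sigma)$ reduces to an injection by $\mc{S}^{per,\langle j\rangle}_{n+\ell+1}(f)$ alone. Naturality of the surgery obstruction map $\xi$ applied to $f$, which is a $\pi_1$-isomorphism, gives $\mc{S}^{per,\langle j\rangle}_{n+\ell+1}(f)\circ v=\xi_{n+\ell+1}(B\Gamma)\circ\iota$. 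Combining the identification of $\mc{S}^{per,\langle j\rangle}_{n+\ell+1}(B\Gamma)$ from Theorem \ref{thm: structures on BGamma  } with Theorem \ref{thm: structures on BP}, this composite is identified with the natural map from $\bigoplus_{(P)}L^{\langle j\rangle}_{n+\ell+1}(\Z[N_\Gamma P])/L^{\langle j\rangle}_{n+\ell+1}(\Z[W_\Gamma P])$ into its $\tfrac{1}{p}$-localization. Hence $\mc{S}^{per,\langle j\rangle}_{n+\ell+1}(f)$ descends to an injection of $\ker(\sigma)/\op{im}(v)$ into the cokernel of this localization map, a $p$-primary torsion group. Since $\mc{S}^{per,\langle j\rangle}_{n+\ell+1}(M)$ is finitely generated (by the surgery exact sequence together with Theorem \ref{thm: L^j computation} and finite generation of $H_{n+\ell}(M;L(\Z))$), so is $\ker(\sigma)/\op{im}(v)$, and a finitely generated $p$-primary torsion abelian group is a finite $p$-group.

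The main obstacle I anticipate is verifying the identification of $\xi_{n+\ell+1}(B\Gamma)\circ\iota$ with the $\tfrac{1}{p}$-localization map in the last paragraph. This amounts to checking compatibility between the splitting from Equation \ref{eq: L split with deco} and the splittings used in the proof of Theorem \ref{thm: structures on BGamma  } (i.e.\ \cite[Theorem 1.13]{LuckRosenthal}), since both rely on carefully constructed partial sections of assembly; once that bookkeeping is in place, the remaining finiteness and torsion statements should follow formally.
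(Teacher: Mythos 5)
Your proofs of the first two claims (injectivity of $v$ and $\op{im}(v)\subseteq\ker(\sigma)$) are essentially the paper's: both rest on the split exactness, after inverting $p$, of the sequence \refeq{eq: L split with deco} together with the factorization of $A_{n+\ell+1}(M)$ through $A_{n+\ell+1}(B\Gamma)$, and on the $p$-torsion freeness of $\bigoplus_{(P)}L^{\langle j\rangle}_{n+\ell+1}(\Z[N_\Gamma P])/L^{\langle j\rangle}_{n+\ell+1}(\Z[W_\Gamma P])$. For the third claim you take a genuinely different route. The paper stays inside $\mc{S}^{per,\langle j\rangle}_{n+\ell+1}(M)$: it shows separately that $\op{im}(\xi_{n+\ell+1}(M))/\op{im}(v)$ is a finite $p$-group (by identifying it with the cokernel of $H_{n+\ell+1}(B\Gamma;L(\Z))\rightarrow H_{n+\ell+1}(\underline{B}\Gamma;L(\Z))$) and that $\ker(\mu)/\op{im}(\xi_{n+\ell+1}(M))$ is a finite $p$-group (from \refdiag{diag: structure set cokernel p-torsion}), never invoking the computation of $\mc{S}^{per,\langle j\rangle}_{n+\ell+1}(B\Gamma)$. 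You instead push $\ker(\sigma)$ forward along $\mc{S}^{per,\langle j\rangle}_{n+\ell+1}(f)$, which is injective there by Proposition \ref{prop: product map is injective}, and compare with $\op{im}(\xi_{n+\ell+1}(B\Gamma)\circ\iota)$; this buys you a cleaner quotient argument at the price of relying on Theorem \ref{thm: structures on BGamma  }. One simplification: the compatibility check you flag as the main obstacle is not actually needed. You do not have to identify $\xi_{n+\ell+1}(B\Gamma)\circ\iota$ with the literal localization map; it suffices that its cokernel is $p$-primary torsion, i.e.\ that it becomes surjective after inverting $p$. That follows formally from what is already established: $\op{coker}(\xi_{n+\ell+1}(B\Gamma))\cong\ker(A_{n+\ell}(B\Gamma))$ vanishes after inverting $p$ because $\beta_{n+\ell}^{\langle j\rangle}\circ A_{n+\ell}(B\Gamma)$ is the map $H_{n+\ell}(B\Gamma;L(\Z))\rightarrow H_{n+\ell}(\underline{B}\Gamma;L(\Z))$, an isomorphism after inverting $p$; and $\op{im}(\iota)\left[\tfrac{1}{p}\right]+\op{im}(A_{n+\ell+1}(B\Gamma))\left[\tfrac{1}{p}\right]=L^{\langle j\rangle}_{n+\ell+1}(\Z[\Gamma])\left[\tfrac{1}{p}\right]$ by the splitting. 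Your final step (finitely generated $p$-primary torsion implies finite $p$-group, with finite generation of $\mc{S}^{per,\langle j\rangle}_{n+\ell+1}(M)$ coming from the surgery exact sequence) is correct and is a point the paper leaves implicit as well.
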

\begin{proof}
Consider the commutative diagram
\[
\begin{tikzpicture}[scale =2]
\node (A) at (3,4) {$0$};\node (B) at (3,3) {$\bigoplus_{(P)\in\mc{P}}L^{\langle j\rangle}_{n+\ell+1}\left(\Z\left[N_\Gamma P\right]\right)/L^{\langle j\rangle}_{n+\ell+1}\left(\Z\left[W_\Gamma P\right]\right)$};
\node (C) at (0,2) {$H_{n+\ell+1}(B\Gamma;L(\Z))$};\node (D) at (3,2) {$L^{\langle j\rangle}_{n+\ell+1}(\Z[\Gamma])$};\node (E) at (6,2) {$\mc{S}^{per,\langle j\rangle}_{n+\ell+1}(M)$};
\node (F) at (3,1) {$H_{n+\ell+1}(\underline{B}\Gamma;L(\Z))$};
\node (G) at (3,0) {$0$};
\path[->] (A) edge (B) (B) edge (D) (B) edge node[above right]{$v$} (E) (C) edge node[above]{$A_{n+\ell+1}(B\Gamma)$} (D) (D) edge node[above]{$\xi_{n+\ell+1}(M)$} (E) (C) edge node[below left]{$H_{n+\ell+1}(i;L(\Z))$} (F) (D) edge (F) (F) edge (G);
\end{tikzpicture}
\]
where the column is split exact after inverting $p$.
In the proof of Lemma \ref{lem: first map iso on kernel} we showed that $H_{n+\ell+1}(M;L(\Z))\rightarrow H_{n+\ell+1}(B\Gamma;L(\Z))$ is surjective after inverting $p$.
Since the assembly map $A_{n+\ell+1}(M)$ factors through $A_{n+\ell+1}(B\Gamma)$, the row is exact after inverting $p$.
Moreover, after inverting $p$, the map $H_{n+\ell+1}(B\Gamma;L(\Z))\rightarrow H_{n+\ell+1}(\underline{B}\Gamma;L(\Z))$ is an isomorphism.

We first show that $v$ is injective.
Since $\bigoplus_{(P)\in\mc{P}}L^{\langle j\rangle}_{n+\ell+1}\left(\Z\left[N_\Gamma P\right]\right)/L^{\langle j\rangle}_{n+\ell+1}\left(\Z\left[W_\Gamma P\right]\right)$ is $p$-torsion free, it suffices to show that $v$ is injective after inverting $p$.
But this follows from the splitting in the vertical column of the diagram above and exactness of the row.

We now show that $\op{im}(v)\subseteq\ker(\sigma)$ and that $\ker(\sigma)/\op{im}(v)$ is a finite abelian $p$-group.
From the surgery exact sequence, we get that $\op{im}(v)\subseteq \ker(\eta_{n+\ell+1}(M))\subseteq \ker(\mu)$ so it suffices to show that $\ker(\eta_{n+\ell+1}(M))/\op{im}(v)$ and $\ker(\mu)/\ker(\eta_{n+\ell+1}(M))$ are finite abelian $p$-groups.
Some diagram chasing shows that the cokernel of $H_{n+\ell+1}(i;L(\Z))$ is isomorphic to $\op{im}(\xi_{n+\ell+1}(M)/\op{im}(v)$.
Since $H_{n+\ell+1}(\underline{B}\Gamma;L(\Z))$ is a finitely generated abelian group, we see that $\op{im}(\xi_{n+\ell+1}(B\Gamma)/\op{im}(v)$ is a finitely generated abelian $p$-group.

It follows from Diagram (\ref{diag: structure set cokernel p-torsion}) that $\ker\left(\eta_{n+\ell+1}\left[\frac{1}{p}\right]\right)=\ker\left(\mu\left[\frac{1}{p}\right]\right)$.
From this, we see that $\ker(\mu)/\op{im}(\xi_{n+\ell+1}(M))$ is a finite abelian $p$-group.
We have shown that $\ker(\mu)/\op{im}(v)$ is a finite abelian $p$-group.
\end{proof}

\begin{prop}\label{prop: per structure set inverting p}
After inverting $p$, the map
\[
\sigma\times\mc{S}_{n+\ell+1}^{per,\langle j\rangle}(f):\mc{S}_{n+\ell+1}^{per,\langle j\rangle}(M)\rightarrow H_n\left(T_{\rho}^n;L(\Z)\right)^{\Z/p}\times\mc{S}_{n+\ell+1}^{per,\langle j\rangle}(B\Gamma )
\]
is an isomorphism.
\end{prop}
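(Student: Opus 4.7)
The plan is to combine the injectivity already established in \refprop{prop: product map is injective} with two separate surjectivity arguments, one for each factor of the target. Given $(x,y) \in H_n(T_\rho^n;L(\Z))^{\Z/p}\left[\frac{1}{p}\right] \times \mc{S}^{per,\langle j\rangle}_{n+\ell+1}(B\Gamma)\left[\frac{1}{p}\right]$, I would construct a preimage in two stages: first lift $x$ to some $\tilde{x} \in \mc{S}^{per,\langle j\rangle}_{n+\ell+1}(M)\left[\frac{1}{p}\right]$, and then modify $\tilde{x}$ by an element of $\ker(\sigma)\left[\frac{1}{p}\right]$ so as to correct the image in the second factor.

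For the first stage, \refprop{prop: structure set cokernel p-torsion} asserts that $\op{coker}(\sigma)$ is a finite $p$-group, so after inverting $p$ the map $\sigma$ is surjective; pick any lift $\tilde{x}$ with $\sigma(\tilde{x})=x$ and set $y' := y - \mc{S}^{per,\langle j\rangle}_{n+\ell+1}(f)(\tilde{x})$. It suffices to realize $y'$ as the image under $\mc{S}^{per,\langle j\rangle}_{n+\ell+1}(f)$ of some element of $\ker(\sigma)\left[\frac{1}{p}\right]$. By \refprop{prop: structures come from normalizers}, $\ker(\sigma)/\op{im}(v)$ is a finite $p$-group, so $\ker(\sigma)\left[\frac{1}{p}\right] = \op{im}(v)\left[\frac{1}{p}\right]$. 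Hence the second stage reduces to showing that $\mc{S}^{per,\langle j\rangle}_{n+\ell+1}(f)\circ v$ is surjective onto $\mc{S}^{per,\langle j\rangle}_{n+\ell+1}(B\Gamma)\left[\frac{1}{p}\right]$.

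Unwinding the definition of $v$ using naturality of the surgery exact sequence, the composite $\mc{S}^{per,\langle j\rangle}_{n+\ell+1}(f)\circ v$ factors as
\[
\bigoplus_{(P)\in\mc{P}} L^{\langle j\rangle}_{n+\ell+1}(\Z[N_\Gamma P])/L^{\langle j\rangle}_{n+\ell+1}(\Z[W_\Gamma P]) \rightarrow L^{\langle j\rangle}_{n+\ell+1}(\Z[\Gamma]) \xrightarrow{\xi_{n+\ell+1}(B\Gamma)} \mc{S}^{per,\langle j\rangle}_{n+\ell+1}(B\Gamma).
\]
By \refthm{thm: structures on BGamma  } together with the remark following it, this composite is precisely the isomorphism
\[
\bigoplus_{(P)\in\mc{P}} L^{\langle j\rangle}_{n+\ell+1}(\Z[N_\Gamma P])/L^{\langle j\rangle}_{n+\ell+1}(\Z[W_\Gamma P])\left[\tfrac{1}{p}\right] \xrightarrow{\cong} \mc{S}^{per,\langle j\rangle}_{n+\ell+1}(B\Gamma)\left[\tfrac{1}{p}\right],
\]
and in particular it is surjective. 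Combining the two stages gives a preimage of $(x,y)$, completing the proof.

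The main obstacle I anticipate is the identification in the last display: one needs to verify that the isomorphism produced in the proof of \refthm{thm: structures on BGamma  } (which comes from the L\"uck--Rosenthal decomposition) genuinely agrees with the map induced by the inclusions $N_\Gamma P \rightarrow \Gamma$ followed by $\xi_{n+\ell+1}(B\Gamma)$. This compatibility is implicit in the proof of \refthm{thm: structures on BGamma  }, since the splitting in Equation~\refeq{eq: L split with deco} is itself defined via these inclusions, but it should be spelled out to make the diagram chase rigorous. Once this compatibility is in hand, the remaining manipulations are straightforward diagram chases at localized abelian groups.
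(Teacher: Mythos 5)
Your proposal is correct and follows the same route the paper takes; the paper's own proof simply cites injectivity from Proposition~\ref{prop: product map is injective} and surjectivity from Propositions~\ref{prop: structure set cokernel p-torsion} and~\ref{prop: structures come from normalizers}, and your argument is the natural fleshing-out of that citation. The one thing worth making explicit in your write-up is the dependence on Theorem~\ref{thm: structures on BGamma  } (and the remark after it) in verifying that $\mc{S}^{per,\langle j\rangle}_{n+\ell+1}(f)\circ v=\xi_{n+\ell+1}(B\Gamma)\circ\iota$ is surjective after inverting $p$; this is not among the results the paper cites in its one-line proof, but it genuinely is needed, and you correctly identify it and the naturality of the surgery exact sequence as the compatibility one has to check.
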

\begin{proof}
It follows from Proposition \ref{prop: product map is injective} that the map is injective.
Surjectivity follows from Proposition \ref{prop: structure set cokernel p-torsion} and Proposition \ref{prop: structures come from normalizers}.
\end{proof}

Now, using Proposition \ref{prop: product map is injective}, Proposition \ref{prop: per structure set inverting p} and the fact that $H_n\left(T_{\rho}^n;L(\Z)\right)^{\Z/p}\times\mc{S}^{per,\langle j\rangle}_{n+\ell+1}(B\Gamma)$ has no $p$-torsion, we obtain an integral computation.

\Sper

\subsection{The Geometric Simple Structure Set of $M$}
Identifying $\mc{S}^{geo,s}(M)$ with $\mc{S}^{\langle1\rangle,s}_{n+\ell+1}(M)$, we see that there is a map
\[
j(M):\mc{S}^{geo,s}(M)\rightarrow\mc{S}^{per,s}(M).
\]

The proof of \cite[Theorem 9.2]{DavisLuckTorusBundles} and the results above give the following theorem.

\begin{theorem}\label{thm: computation of geometric simple structure set}
There is a homomorphism
\[
\sigma^{geo}:\mc{S}^{geo,s}(M)\rightarrow H_n(T_{\rho};L(\Z)\langle1\rangle)^{\Z/p}
\]
such that the following hold:
\begin{enumerate}
\item The map
\[
\sigma^{geo}\times\left(\mc{S}^{per,s}_{n+\ell+1}(f)\circ j(M)\right):\mc{S}^{geo,s}(M)\rightarrow H_n(T_{\rho};L(\Z)\langle1\rangle)^{\Z/p}\times\mc{S}_{n+\ell+1}^{per,s}(B\Gamma )
\]
is injective.
\item The cokernel of $\sigma^{geo}$ is a finite abelian $p$-group.
\item Consider the composite
\[
\nu^{geo}:\bigoplus_{(P)\in\mc{P}}H_{n+\ell+1}\left(T^b;\tilde{L}^s(\Z[P])\right)\rightarrow \tilde{L}^s_{n+\ell+1}(\Z[\Gamma])\xrightarrow{\tilde{\xi}^{\langle1\rangle}_{n+\ell+1}}\mc{S}^{geo,s}(M)
\]
where the first map comes from (\ref{eq: L split with deco}) and $\tilde{\xi}^{\langle1\rangle}_{n+\ell+1}$ is the map from the geometric surgery exact sequence.
Then $\nu^{geo}$ is injective, the image of $\nu^{geo}$ is contained in the kernel of $\sigma^{geo}$ and $\ker(\sigma^{geo})/\op{im}(\nu^{geo})$ is a finite abelian $p$-group.
\item After inverting $p$, the map $\sigma^{geo}\times\left(\mc{S}^{per,s}_{n+\ell+1}(f)\circ j(M)\right)$ is an isomorphism.
\end{enumerate}
\end{theorem}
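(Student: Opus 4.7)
The plan is to mirror the arguments developed in Section \ref{section: structure set} for the periodic structure set $\mc{S}^{per,s}_{n+\ell+1}(M)$, replacing the spectrum $L(\Z)$ by its $1$-connective cover $L(\Z)\langle 1\rangle$ throughout. The natural map of spectra $L(\Z)\langle 1\rangle\rightarrow L(\Z)$ fits into a cofiber sequence whose cofiber is the Eilenberg--MacLane spectrum $HL_0(\Z)=H\Z$, and it induces the comparison $j(M)$ between the geometric and periodic surgery exact sequences. This comparison is the bridge that will let us port each of the previous propositions to the connective setting.

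First, I would define $\sigma^{geo}$ as the composite
\[
\mc{S}^{geo,s}(M)\xrightarrow{\eta^{\langle1\rangle}_{n+\ell+1}(M)}H_{n+\ell}(M;L(\Z)\langle1\rangle)\twoheadrightarrow E^{\infty}_{\ell,n}\cong H_n\left(T^n_\rho;L(\Z)\langle1\rangle\right)^{\Z/p}
\]
in exact analogy with $\sigma$. For this to make sense I need the analog of \refprop{prop: L-homology differentials vanish} for $L(\Z)\langle1\rangle$-homology; comparing Atiyah--Hirzebruch--Serre spectral sequences along $L(\Z)\langle1\rangle\rightarrow L(\Z)$ and along the cofiber $H\Z$ (whose spectral sequence collapses for trivial reasons), the vanishing in the $L(\Z)$ case forces vanishing for $L(\Z)\langle 1\rangle$. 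Next, I would replay \reflem{lem: first map iso on kernel} and \reflem{lem: second map iso on kernel} essentially verbatim, using the commutative square relating the two surgery exact sequences via $j(M)$; the $KO$-theoretic input from \refcor{cor: KO-homology of BGamma  } is unchanged, and the $p$-local part of the argument is controlled by \reflem{lem: second map iso on kernel} itself applied to the $H\Z$-cofiber. This yields part (1).

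Third, for parts (2) and (3), I would replay \refprop{prop: structure set cokernel p-torsion} and \refprop{prop: structures come from normalizers} with $L(\Z)\langle 1\rangle$ in place of $L(\Z)$. The key tool is the split short exact sequence from \refeq{eq: L split with deco}, which is independent of the choice of decoration and which splits $L^{s}_{n+\ell+1}(\Z[\Gamma])$ after inverting $p$ into a sum indexed by $\mc{P}$ and the homology term; this splits $\nu^{geo}$ off inside $\tilde{\xi}^{\langle1\rangle}_{n+\ell+1}$ just as $v$ split inside $\xi_{n+\ell+1}(M)$. The same diagram chases then give $\op{im}(\nu^{geo})\subseteq\ker(\sigma^{geo})$ and identify both cokernels as $p$-torsion subquotients of $H_{n+\ell+1}(\underline{B}\Gamma;L(\Z)\langle1\rangle)$, which is finitely generated. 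Combining with (1) yields (4).

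The main obstacle will be checking that the $p$-local $KO$-homology argument in the proof of \reflem{lem: first map iso on kernel} still carries through after passing to the connective cover: there we used that $KO_{n+\ell+1}(M)_{(p)}\rightarrow KO_{n+\ell+1}(B\Gamma)_{(p)}$ hits the $p$-torsion-free part, and the analogous statement for $L(\Z)\langle1\rangle$-homology requires verifying that the difference between $L(\Z)$ and $L(\Z)\langle1\rangle$ contributes only in low degrees that do not affect degree $n+\ell$. Since we are in a relatively high total degree and the cofiber $H\Z$ contributes only via ordinary homology of $M$ shifted by $0$, this is a bookkeeping check rather than a genuinely new argument; as indicated in Section \ref{section: structure set}, the techniques of \cite{DavisLuckTorusBundles} transfer to this setting with only slight modifications, and in particular \cite[Theorem 9.2]{DavisLuckTorusBundles} provides the template.
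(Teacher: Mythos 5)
Your overall strategy -- define $\sigma^{geo}$ via $\eta^{\langle1\rangle}_{n+\ell+1}(M)$ and the edge projection of the connective $L$-homology spectral sequence, then transport Propositions \ref{prop: product map is injective}--\ref{prop: per structure set inverting p} across the comparison map $j(M)$ -- is exactly the route the paper takes, which simply defers to the proof of \cite[Theorem 9.2]{DavisLuckTorusBundles} together with the periodic results already established. However, there is a concrete error in your set-up: the cofiber of $L(\Z)\langle1\rangle\rightarrow L(\Z)$ is \emph{not} $HL_0(\Z)=H\Z$; it is the Postnikov truncation $\tau_{\le0}L(\Z)$, which, because $L(\Z)$ is $4$-periodic, has nonzero homotopy groups $\Z,\Z/2,\Z,\dots$ in infinitely many negative degrees. (You would get $H\Z$ as the cofiber of $L(\Z)\langle1\rangle\rightarrow L(\Z)\langle0\rangle$, not of the map to the periodic spectrum.) This matters in the two places you invoke it: the Atiyah--Hirzebruch--Serre spectral sequence for the cofiber theory does not ``collapse for trivial reasons,'' and the assertion that the cofiber ``contributes only via ordinary homology of $M$ shifted by $0$'' is false for $B\Gamma$ and for $M$ in degrees below the top.

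Both uses are repairable, and you should repair them rather than lean on the $H\Z$ identification. For the vanishing of differentials, the cleaner argument is that $H_j\left(T^n_\rho;L(\Z)\langle1\rangle\right)\rightarrow H_j\left(T^n_\rho;L(\Z)\right)$ is a split injection of $\Z[\Z/p]$-modules for every $j$ (both collapse to sums of $H_i(T^n_\rho;L_k(\Z))$ over $i+k=j$, the connective one being the summand with $k\ge1$), so the map of $E^2$-pages is injective and Proposition \ref{prop: L-homology differentials vanish} for $L(\Z)$ forces the connective differentials to vanish by naturality, page by page. For the comparison of structure sets, what you actually need is that on the $(n+\ell)$-dimensional complex $M$ one has $H_{n+\ell+1}\left(M;\tau_{\le0}L(\Z)\right)=0$ and $H_{n+\ell}\left(M;\tau_{\le0}L(\Z)\right)\cong H_{n+\ell}(M;\Z)$, because in total degree $\ge n+\ell$ only the $\pi_0$-line of the truncation can contribute; this is the dimension count that makes $j(M)$ injective and controls the difference between $\sigma^{geo}$ and $\sigma$, and it is a statement about $M$, not about $B\Gamma$. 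With these corrections the remainder of your outline (replaying Lemmas \ref{lem: first map iso on kernel} and \ref{lem: second map iso on kernel}, and Propositions \ref{prop: structure set cokernel p-torsion} and \ref{prop: structures come from normalizers}, using the decoration-independent splitting (\ref{eq: L split with deco})) goes through as in \cite[Theorem 9.2]{DavisLuckTorusBundles}.
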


From this, we conclude
\Sgeo

\bibliographystyle{alpha}
\bibliography{TorusBundlesLensSpacesFinal}
\end{document}